\documentclass[a4paper,12pt,reqno]{amsart}
\usepackage{mathrsfs,latexsym, bm, amscd,amssymb,url}
\usepackage{xcolor, mathtools}
\PassOptionsToPackage{pdfusetitle,pagebackref,colorlinks}{hyperref}
\usepackage[all,cmtip]{xy}
\usepackage{bookmark}
\hypersetup{
linkcolor={red!70!black},
citecolor={green!70!black},
urlcolor={teal!80!black},
linkcolor={teal!80!black}
}
\usepackage[all,cmtip]{xy}  
\usepackage{graphicx}
\usepackage{color}
\usepackage{hyperref}
\usepackage{enumerate}
\usepackage{stmaryrd}

\usepackage[T1]{fontenc}         
\usepackage{ae}									
\usepackage[all,cmtip]{xy}  
\usepackage{graphicx}
\usepackage{color}

\newtheorem{theorem}{Theorem}[section]

\newtheorem{lemma}[theorem]{Lemma}
\newtheorem{proposition}[theorem]{Proposition}

\newtheorem{example}[theorem]{Example}

\theoremstyle{question}

\theoremstyle{definition}
\newtheorem{definition}[theorem]{Definition}

\theoremstyle{remark}
\newtheorem{remark}[theorem]{Remark}

\newtheoremstyle{cited}{.5\baselineskip\@plus.2\baselineskip\@minus.2\baselineskip}{.5\baselineskip\@plus.2\baselineskip\@minus.2\baselineskip}{\itshape}{}{\bfseries}{\bfseries .}{5pt plus 1pt minus 1pt}{\thmname{#1}\thmnumber{~#2}\thmnote{ \normalfont#3}}
\theoremstyle{cited}

\newtheoremstyle{citeddef}{.5\baselineskip\@plus.2\baselineskip\@minus.2\baselineskip}{.5\baselineskip\@plus.2\baselineskip\@minus.2\baselineskip}{}{}{\bfseries}{\bfseries .}{5pt plus 1pt minus 1pt}{\thmname{#1}\thmnumber{~#2}\thmnote{ \normalfont#3}}
\theoremstyle{citeddef}

\usepackage{tikz-cd}
\usepackage{enumitem}

\PassOptionsToPackage{pdfusetitle,pagebackref,colorlinks}{hyperref}
\usepackage{bookmark}
\hypersetup{
linkcolor={red!70!black},
citecolor={green!70!black},
urlcolor={blue!80!black}
}

\def\CC{{\mathbb C}}

\def\PP{{\mathbb P}}

\def\HH{{\mathbb H}}

\def\llra{\hbox to 10mm{\rightarrowfill}}

\def\lllra{\hbox to 15mm{\rightarrowfill}}

\def\phi{{\varphi}}

\def\cI{\mathcal{I}}

\def\cL{\mathcal{L}}
\def\cO{\mathcal{O}}
\def\cG{\mathcal{G}}

\def\cH{\mathcal{H}}

\def\cS{\mathcal{S}}

\def\cN{\mathcal{N}}

\def\cV{\mathcal{V}}

\let\tilde\widetilde

\DeclareMathOperator{\Alb}{Alb}
\DeclareMathOperator{\alb}{alb}

\DeclareMathOperator{\Bl}{Bl}

\DeclareMathOperator{\codim}{codim}

\DeclareMathOperator{\Hom}{Hom}
\DeclareMathOperator{\id}{id}

\DeclareMathOperator{\Supp}{Supp}

\begin{document}

\title[Geometry of Holomorphic One-forms on Smooth Projective Varieties]{Geometry of Holomorphic One-forms on Smooth Projective Varieties}
\author{Jiabin Du}
\address{Shanghai Institute for Mathematics and Interdisciplinary Sciences (SIMIS), Shanghai 200433, China}
\address{Research Institute of Intelligent Complex Systems, Fudan University, Shanghai 200433, China}
\email{jiabin.du@simis.cn}

\author{Feng Hao}
\address{School of Mathematics\\ Shandong University\\ Jinan 250100\\ China}
\email{feng.hao@sdu.edu.cn}

\author{Haoyuan Li}
\address{School of Mathematics\\ Shandong University\\ Jinan 250100\\ China}
\email{haoyuan.li@mail.sdu.edu.cn}

\author{Zichang Wang}
\address{School of Mathematical Science\\ University of Science and Technology of China\\ Hefei 230026\\ China}
\email{wangzichang@mail.ustc.edu.cn}

\date{\today}

\keywords{holomorphic one-forms, zero loci, abelian variety, linearity,  without blow-up in codimension zero, cohomology jump loci}

\subjclass{14J17, 14K05, 14K12}

\begin{abstract}
%In this article, we show  any given morphism from a smooth projective varieties $X$ to a simple abelian varieties $A$ are smooth if and only if there exists a nowhere vanishing holomorphic 1-form on $X$ that  is pulled back from $A$.  Moreover, we systematically studied the (non)linearity of the spaces of degenerate holomorphic 1-forms on smooth projective varieties.

%In this article, we show  that any  morphism from a smooth projective variety $X$ to a simple abelian variety $A$ is smooth, if and only if there exists a nowhere vanishing holomorphic 1-form on $X$ pulled back from $A$. As the key ingredient in the proof, we also show any $\mathbb{Z}$-homological fibre bundle morphism is without blow-up in codimension 0.

In this article, we show  that any  morphism $f$ from a smooth projective variety $X$ to a simple abelian variety $A$ is smooth, if and only if there exists a holomorphic 1-form $\omega$ on $A$  such that $f^*\omega$ has no zero. As the key ingredient in the proof, we show any $\mathbb{Z}$-homology fibre bundle morphism is without blow-up in codimension 0 in the sense of Sabbah.
%As a key ingredient in the proof, we also estabilish that any $\mathbb{Z}$-homological fibre bundle morphism is without blow-up in codimension 0.
%This proves a conjecture proposed by the second named author.

Furthermore, we investigate the structure of the spaces of holomorphic 1-forms with zeros, and show that they are linear for large classes of varieties. Also, we construct a delicate example of a smooth projective subvariety of an abelian variety for which the holomorphic 1-forms with positive dimensional zero loci do not form a linear subset. Finally, we study algebraic surfaces admitting holomorphic 1-forms that have zeros and do not arise from cohomology jump loci.

%In addition, we investigate the linearity of the spaces of degenerate holomorphic 1-forms, construct an explicit example of a smooth subvariety of an abelian variety with a non-linear lower level set and analyze the structure of algebraic surfaces admitting degenerate holomorphic 1-forms  not arising from cohomology jump loci. 
\end{abstract}
\maketitle
\vspace*{6pt}
\tableofcontents  % for this guide only.
% A table of contents should normally not be included
\section{Introduction}

Holomorphic 1-form is a basic concept in algebraic geometry and occupies a central position in analyzing the geometry and topology of  irregular varieties. The space of holomorphic 1-forms governs the morphisms from varieties to abelian varieties, especially the Albanese maps, so the information encoded in holomorphic 1-forms should potentially reveal many structural properties of irregular varieties. In fact, recent studies  (\cite{CCH25}, \cite{Chu24}, \cite{DHL24}, \cite{HK05}, \cite{Hao24}, \cite{HS21}, \cite{Kot22}, \cite{LZ05},  \cite{Pie25}, \cite{PS14}, \cite{Sch21}, \cite{SY25}, etc.) show that fine properties of holomorphic 1-forms heavily constrain the topology and birational geometry of irregular varieties. 

Since the zeros of holomorphic 1-forms are controlled by the singularities of the Albanese maps, it is usually very difficult to keep track of the changes of zeros of holomorphic 1-forms under algebraic operations on varieties like birational modifications, branched covers, base changes, and  hyperplane sections, etc. In this article, we provide a fine study of the space of holomorphic 1-forms without or with zeros on smooth projective varieties. Throughout this article, all varieties are defined over the complex numbers field $\mathbb{C}$. 

We develop a new method via limit relative conormal vectors to analyze the singularities of morphisms from smooth projective varieties to abelian varieties. For our first main theorem, we have
\begin{theorem}[{=Theorem \ref{thm:simple-nowhere vanishing 1-form}}]\label{thm:simple-smooth}
    Let $f\colon X\to A$ be a morphism from a smooth projective variety $X$ to a simple abelian variety $A$. The following are equivalent:

    1) $f$ is smooth;
    
    2) there exists a holomorphic 1-form $\omega\in H^0(A, \Omega^1_A)$ such that $f^*\omega$ has no zero.
\end{theorem}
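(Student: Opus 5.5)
The direction $1)\Rightarrow 2)$ is formal, and I would dispose of it first. If $f$ is smooth, then $df_x\colon T_xX\to T_{f(x)}A$ is surjective at every $x$. Hence the dual map $f^*\colon \Omega^1_{A,f(x)}\otimes k(x)\to \Omega^1_{X,x}\otimes k(x)$ is injective. Since $\Omega^1_A$ is trivial, any nonzero $\omega\in H^0(A,\Omega^1_A)$ is nowhere zero on $A$, so $(f^*\omega)(x)=\omega_{f(x)}\circ df_x\neq 0$ for all $x$.

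For $2)\Rightarrow 1)$ I would pass to the cotangent bundle $T^*A=A\times V$, with $V=H^0(A,\Omega^1_A)$, and consider the relative critical set
\[
C_f=\{(x,\eta)\in X\times V : \eta_{f(x)}\circ df_x=0\}
\]
together with its image $\Lambda_f=(f\times \id)(C_f)\subset A\times V$. Then $f^*\omega$ has a zero if and only if $\omega\in p_2(\Lambda_f)$, where $p_2\colon A\times V\to V$ is the projection. Moreover, $f$ fails to be smooth exactly over the set $\Delta\subset A$ of points $a$ for which $\Lambda_f\cap(\{a\}\times V)$ contains a nonzero covector, including the case where $f$ is not surjective.

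The plan has three steps.
\begin{enumerate}
\item From the existence of $\omega$ with $f^*\omega$ nowhere vanishing, deduce that $Rf_*\mathbb{Z}_X$ is as nice as possible. Following the Morse-theoretic arguments of Schreieder and Hao--Schreieder, I expect $f$ to be a $\mathbb{Z}$-homology fibre bundle morphism, that is, $R^if_*\mathbb{Z}$ is locally constant on $A$. In particular $\omega$ avoids the characteristic cycle of $Rf_*\mathbb{C}_X$, and since $f$ is proper this forces $f$ to be surjective.
\item Apply the key result announced in the abstract: a $\mathbb{Z}$-homology fibre bundle morphism is without blow-up in codimension $0$ in Sabbah's sense. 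Via limits of relative conormal vectors, this should give the following. For every irreducible component $\Delta_i$ of the critical value locus $\Delta$, the conormal variety $T^*_{\Delta_i}A$ lies in the closure of $\Lambda_f$, hence in $\Lambda_f$, because $\Lambda_f$ is closed by properness.
\item Use simplicity of $A$. For a proper subvariety $Z\subsetneq A$ of a simple abelian variety, the projection $p_2\colon T^*_ZA\to V$ is surjective. Indeed, otherwise the Gauss map of $Z$ would be degenerate, and $Z$ would be stable under translation by a positive-dimensional abelian subvariety, which is impossible. Thus if $\Delta\neq\emptyset$, then $\omega\in p_2(T^*_{\Delta_i}A)\subset p_2(\Lambda_f)$, contradicting the hypothesis. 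Hence $\Delta=\emptyset$ and $f$ is smooth.
\end{enumerate}

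The main obstacle is step 2. A priori $\Lambda_f$ need not be Lagrangian: its fibres over critical values can be large or small in ways unrelated to $\Delta$. The content of ``without blow-up in codimension $0$'' is exactly that no codimension-$0$ part of $\Delta$ (a divisor or a lower-dimensional stratum) is missed by limits of relative conormal vectors.

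To prove it, I would first use the local constancy of $R^if_*\mathbb{Z}$ from step 1 to control the local Euler obstructions of the strata. I would then compare the Lagrangian characteristic cycle $CC(Rf_*\mathbb{C}_X)$ with $\Lambda_f$ via the inclusion $CC(Rf_*\mathbb{C}_X)\subset\Lambda_f$ (Kashiwara--Schapira). Finally, I would argue that a $\mathbb{Z}$-homology fibre bundle cannot have a critical stratum whose conormal is entirely absent from $\Lambda_f$. I would try this first by restricting to a general curve transverse to $\Delta_i$, which reduces the problem to a one-parameter degeneration with trivial monodromy and constant integral homology.
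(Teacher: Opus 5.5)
\textbf{Step 2 has a genuine gap.} Your architecture matches the paper's: a nowhere vanishing $f^*\omega$ gives a $\mathbb{Z}$-homology fibre bundle, hence (Theorem \ref{thm:homological-trivial-deformation-no-blow-up-codim0}) $f$ is without blow-up in codimension $0$, hence $\Lambda_f$ is made of conormal varieties, and simplicity then kills $\Delta$. But the passage from the third of these to the fourth, which you flag as the main obstacle, is not proved. It is also not a formal consequence of Sabbah's definition. Being without blow-up in codimension $0$ means that $T^*_fX\to A$ is equidimensional. That is a statement about the relative conormal space inside $T^*X$, not about $\Lambda_f\subset T^*A$.

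What you get for free (the proof of Lemma \ref{lem:conormalspace-nonlinear}) is only that $\dim\Lambda_f\le\dim A$, with each component contained in some $T^*_ZA$. A component of dimension $<\dim A$ over some $Z\neq A$ projects to a subset of $V$ of dimension $<\dim V$. Such a component could miss $\omega$, so step 3 cannot start.

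The missing ingredient is \cite[Proposition 3.3, 2)]{Sab85}. It says that if $f$ is without blow-up in codimension $0$, then $\Lambda_f=(q\times\id)(q^*T^*A\cap T^*_X(\mathbb{P}^m\times A))$ is Lagrangian, hence a finite union of full conormal varieties $T^*_{Z_i}A$. With this, $\Delta=\bigcup_{Z_i\neq A}Z_i$ and your step 2 claim follows. Your step 3 (equivalently Lemma \ref{prop:DHL}) then finishes exactly as in the paper.

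\textbf{Your proposed substitute cannot work.} Once step 1 holds, $Rf_*\mathbb{C}_X$ has locally constant cohomology sheaves. Its micro-support, and a fortiori its characteristic cycle, is therefore the zero section. So the inclusion $CC(Rf_*\mathbb{C}_X)\subset\Lambda_f$ carries no information about critical strata, and the same goes for local Euler obstructions computed from $Rf_*$.

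By hypothesis $Rf_*\mathbb{Z}_X$ looks exactly like that of a smooth family, so no invariant of $Rf_*$ alone can detect the singular fibres. The input has to be geometric. The paper uses the flatness and integrality of fibres from \cite{Bobadilla-Kollar2012} to prove Theorem \ref{thm:homological-trivial-deformation-no-blow-up-codim0}, and then applies Sabbah's proposition. Your curve-restriction idea gives no mechanism for producing the missing covectors, and $f^{-1}$ of the curve need not even be smooth.

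\textbf{Step 1 also needs simplicity, which you never invoke.} As stated, it is false for non-simple $A$: the Schreieder--Yang blow-up of $E_1\times E_2\times\mathbb{P}^1$ in the introduction has a nowhere vanishing pulled-back form but jumping fibre cohomology. The fix runs as follows.
\begin{enumerate}
\item Since $f$ is proper, $SS(Rf_*\mathbb{Z}_X)\subset\Lambda_f$.
\item By hypothesis $\omega\notin p_2(\Lambda_f)$.
\item By simplicity, every conormal $T^*_ZA$ with $Z\neq A$ surjects onto $V$.
\end{enumerate}
Hence the micro-support is the zero section, and the $R^if_*\mathbb{Z}$ are local systems. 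The paper obtains this step by citing \cite[Corollary 3.4]{SY25} or \cite[Corollary 1.5]{DHL24}.
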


The simpleness condition in Theorem \ref{thm:simple-smooth}  is necessary, and Theorem \ref{thm:simple-smooth} is sharp.  In fact, Schreieder and Yang \cite{SY25} provide the following example. Let $Y=E_1\times E_2\times \mathbb{P}^1$, with $E_1$ and $E_2$ being two non-isogenous elliptic curves. Denote by $C$ the disjoint union of two elliptic curves $E_1\times \{e_2\}\times \{0\}$ and $\{e_1\}\times E_2\times \{\infty\}$. Consider the blowup $\text{Bl}_{C}Y$ of $Y$ along $C$. Then the natural morphism $\pi\colon \text{Bl}_{C}Y \to E_1\times E_2$ is not smooth. However, there exists a nowhere vanishing holomorphic 1-form pulled back from $E_1\times E_2$. Theorem \ref{thm:simple-smooth}  completely proves a conjecture that is proposed by the second named author in \cite[Conjecture 1.5]{Hao24}. For varieties of dimension no more than 4, this problem is already solved by Church \cite[Theorem D]{Chu26}.

In order to deal with Theorem \ref{thm:simple-smooth}, we establish our second main result. Recall the following key terminology introduced by Sabbah \cite[Definition 3.1]{Sab85}. A proper morphism $f\colon X\to Y$ between smooth varieties is \textit{without blow-up in codimension 0}, if the natural morphism from the relative conormal space of $f$ to $Y$ is equidimensional. Here we mean the relative conormal space of $f$ by the closure of the relative conormal bundle of the smooth loci of $f$ in the cotangent bundle of $X$ (See Definition \ref{sans  \'eclatement en codimension 0} in Section \ref{sect:smooth-mor-simpleAV-1-form}). 

\begin{theorem}[{=Theorem \ref{thm:homological-trivial-deformation-no-blow-up-codim0}}]\label{main-thm:homological-trivial-deformation-no-blow-up-codim0}
  Let $f\colon X\to Y$ be a proper morphism between algebraic varieties, where $X$ is smooth and $Y$ is normal. Suppose $R^if_*\mathbb{Z}$ are $\mathbb{Z}$-local systems on $Y$ for all integers $i$, then $Y$ is smooth and $f$ is without blow-up in codimension 0.
\end{theorem}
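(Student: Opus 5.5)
We argue in two stages: first that $Y$ is smooth, then that the relative conormal space is equidimensional over $Y$.

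\emph{Smoothness of $Y$.} Let $d=\dim X-\dim Y$ be the relative dimension. Since $f$ is proper and $R^{2d}f_*\mathbb{Z}$ is a local system, its rank is constant. Over the dense open set where $f$ is smooth, this rank counts the connected components of the fibres. Combining proper base change with this constancy, every fibre has pure dimension $d$, and the top cohomology detects this. So $f$ is equidimensional and, being proper between irreducible spaces, surjective.

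Now pick a point $y\in Y$ and a small contractible Stein neighbourhood $U$ of $y$. Because the sheaves $R^if_*\mathbb{Z}$ are locally constant, $H^*(f^{-1}(U),\mathbb{Z})\cong H^*(F_y,\mathbb{Z})\cong H^*(F_{y'},\mathbb{Z})$ for a general fibre $F_{y'}$. Equivalently, $Rf_*\mathbb{Z}_X$ restricted to $U$ is a complex with locally constant cohomology, i.e.\ the decomposition theorem for $Rf_*\mathbb{Q}_X[\dim X]$ has no summands supported on proper subvarieties. The summand $\mathcal{IC}_Y$ is a direct summand of $Rf_*\mathbb{Q}_X[\dim X]$ shifted by $-d$, so it must have locally constant cohomology sheaves. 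Hence $Y$ is a rational homology manifold.

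To upgrade this to smoothness, I would use the integrality of the hypothesis: the fibration is a $\mathbb{Z}$-homology fibre bundle. A plausible route is to show that the local fundamental group of $Y$ at $y$ is trivial and that the link is an integral homology sphere, using the $\mathbb{Z}$-coefficient Leray spectral sequence and the integrality of the fibre class. Then apply a Mumford-type criterion for normal singularities. This step is delicate in general, so I would first try to reduce it to a result stated earlier in the paper. If none applies, I would argue via generic transversal slices: cut $Y$ by general hypersurfaces through $y$ to reduce to a normal surface germ. There, a $\mathbb{Z}$-homology manifold that is normal is smooth by Mumford's theorem, and one then argues by induction on $\dim Y$.

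\emph{No blow-up in codimension 0.} Now let $T^*_fX$ be the relative conormal space, and suppose some component $W\subset T^*_fX$ over a proper closed subset $Z\subset Y$ has dimension at least $\dim X$, i.e.\ larger than it should be. Restricting to a general curve germ $\gamma\colon(\Delta,0)\to Y$ through a general point of $Z$, one obtains a limit relative conormal vector $\xi$ at a point $x\in F_{y}$ that is not a limit along generic directions in a one-parameter family. The base change $X_\Delta\to\Delta$ is then a one-parameter degeneration whose nearby cycles have rank zero, because $R^if_*\mathbb{Z}$ is a local system and so the vanishing cycles $\phi_f\mathbb{Z}=0$.

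By the Lê–Sabbah / Brylinski–Dubson–Kashiwara description, the characteristic cycle of $Rf_*\mathbb{Z}_X$, or of the relative vanishing cycles, is computed from $T^*_fX$. Sabbah's criterion says $f$ is without blow-up in codimension 0 iff the relative characteristic variety is controlled by the zero section of $T^*Y$. The plan is to show that a bad component $W$ forces a nonzero vanishing-cycle contribution $\phi_{g\circ f}\mathbb{Z}_X$ at some point, where $g$ is a general function on $Y$ whose differential is the image of the limit covector. This contradicts the local constancy of $Rf_*\mathbb{Z}$: by proper base change and smoothness of $Y$, $\phi_g(Rf_*\mathbb{Z})=0$, while that same sheaf equals $Rf_*\phi_{g\circ f}\mathbb{Z}$.

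The main obstacle is this last step. A nonzero local vanishing cycle of $g\circ f$ could in principle cancel globally along the fibre. To prevent this, I would use the positivity of characteristic cycles of perverse sheaves, working with $\mathbb{Q}$ coefficients on $\mathbb{Z}$-lattices, to conclude that the Euler obstruction multiplicities are nonnegative and so cannot cancel.
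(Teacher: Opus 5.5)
Both stages have genuine gaps.

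\textbf{Smoothness of $Y$.} Your decomposition-theorem argument that $Y$ is a rational homology manifold is fine. It cannot, however, be upgraded the way you propose.
\begin{itemize}
\item The $E_8$ singularity $x^2+y^3+z^5=0$ is a normal surface germ whose link is the Poincar\'e sphere, an integral homology $3$-sphere. So a normal $\mathbb{Z}$-homology-manifold surface need not be smooth. Mumford's criterion needs a simply connected link, and homological data about $f$ do not give you that.
\item From dimension $3$ on, there are normal singular points that are even topological manifolds (Brieskorn, e.g.\ $z_1^3+z_2^2+z_3^2+z_4^2=0$). So no criterion based on the local topology of $Y$ can work in general.
\item The slicing induction also breaks: for a hypersurface $H\ni y$, the preimage $f^{-1}(H)$ need not be smooth along $F_y$.
\end{itemize}
The ``integrality of the fibre class'' you mention is the right ingredient, but it must be applied to the fibres, not to the link of $Y$. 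Since $R^{2d}f_*\mathbb{Z}$ is an integral local system, the specialization $H^{2d}(F_y,\mathbb{Z})\to H^{2d}(F_{y'},\mathbb{Z})$ is an isomorphism, and it is governed by the multiplicities of the components of $F_y$. This forces $F_y$ to be irreducible and generically reduced (say when general fibres are connected). Then at a general $x\in F_y$ the scheme-theoretic fibre is smooth of dimension $d$, so $df_x$ has rank $\dim Y$. For suitable local functions $g=(g_1,\dots,g_m)$ on $Y$, a local section $s$ of the submersion $g\circ f$ makes $f\circ s$ inverse to $g$ near $y$, since $Y$ is locally irreducible. Hence $Y$ is smooth at $y$, and $f$ is flat by miracle flatness. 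This is what the paper imports from de Bobadilla--Koll\'ar (Proposition 6, Corollary 7), together with the facts its second step actually uses: the fibres are integral l.c.i., and $f$ is singular at $x$ iff $F_{f(x)}$ is singular at $x$.

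\textbf{No blow-up in codimension 0.} This stage does not close either.
\begin{itemize}
\item Local constancy of $R^if_*\mathbb{Z}$ does not give $\phi_{g\circ f}\mathbb{Z}_X=0$. It only gives the vanishing of $R\Gamma(F_y,\phi_{g\circ f}\mathbb{Z}_X)=\phi_g(Rf_*\mathbb{Z}_X)_y$ for $dg_y\neq 0$.
\item There is no function $g$ attached to a bad limit covector. The pairs $(x,f_x^*\zeta)$ with $x\in F_y$ and $\zeta\in T^*_yY$ form a set of dimension at most $\dim F_y+\dim Y=\dim X$. So an excess component of $\pi^{-1}(y)$ consists generically of pairs $(x,\eta)$ with $\eta\notin f^*_xT^*_yY$, which are not pullbacks of covectors on $Y$. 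You give no mechanism tying them to critical points of some $g\circ f$.
\item Positivity cannot prevent cancellation. The sheaf $\phi_{g\circ f}\mathbb{Q}_X[n-1]$ is perverse, but nonzero perverse sheaves on projective varieties can have zero hypercohomology (e.g.\ $L[1]$ for a nontrivial rank-one local system $L$ on an elliptic curve). This global cancellation is exactly what keeps the de Bobadilla--Koll\'ar conjecture open.
\end{itemize}

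The paper instead argues directly, with no microlocal tools. Using flatness, integral fibres and $\mathrm{Sing}(f)=\bigcup_y\mathrm{Sing}(F_y)$, it reduces equidimensionality to density of the irreducible $n$-dimensional $\mathcal{N}^*_{F^\circ_{y_0}/X}$ in $\pi^{-1}(y_0)$. It then transports approximating sequences into $F^\circ_{y_0}$ using the frame $f^*dz_1,\dots,f^*dz_m$.

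Be aware that this transfer is itself incomplete. The convergence $\mu_k-\eta_k=\sum_i\alpha_i^k\bigl(f^*dz_i(b_k)-f^*dz_i(a_k)\bigr)\to 0$ is only guaranteed for bounded $\alpha_i^k$, and bounded coefficients only yield limits in $f^*_{x_0}T^*_{y_0}Y$, i.e.\ the harmless ones above. Concretely, take the flat, integral-fibred model $(t,y,z)\mapsto(t,y^2-z^3)$. It occurs in $\mathrm{id}\times h\colon\mathbb{P}^1\times S\to\mathbb{P}^1\times\mathbb{P}^1$ for an elliptic fibration $h$ with a cuspidal fibre. There the fibre of $T^*_fX$ over the cusp point $x_0$ is all of $T^*_{x_0}X$, while the closure of $\mathcal{N}^*_{F^\circ_{y_0}/X}$ meets it only in $\mathrm{span}(dt,dy)$. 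So the density claim needs an input beyond flatness and integral fibres, and neither your sketch nor the written argument supplies it.
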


The morphism $f$ with the topological condition that $R^if_*\mathbb{Z}$ are $\mathbb{Z}$-local systems for all integers $i$, is usually called a \textit{$\mathbb{Z}$-homology fibre bundle}. Roughly speaking, Theorem \ref{main-thm:homological-trivial-deformation-no-blow-up-codim0} shows that the $\mathbb{Z}$-homology fibre bundle condition kills all the hidden non-flatness of a morphism. For the morphism $f\colon X\to A$ in Theorem \ref{thm:simple-smooth}, via the simpleness of $A$ and Theorem \ref{main-thm:homological-trivial-deformation-no-blow-up-codim0}, we  obtain that the space of holomorphic 1-forms with zeros on $X$ is either the zero vector space or $H^0(X, \Omega_X^1)$. Then the existence of a nowhere vanishing holomorphic 1-form on $X$ pulled back from $A$ induces the smoothness of $f$. Theorem \ref{main-thm:homological-trivial-deformation-no-blow-up-codim0} itself is also motivated by the de Bobadilla-Koll\'ar conjecture, which states that if $f\colon X\to Y$ is a proper morphism from a smooth variety $X$ to a normal variety $Y$, and $R^if_*\mathbb{Z}$ are $\mathbb{Z}$-local systems on $Y$ for all $i$, then $f$ is a smooth morphism (See \cite[Conjecture 3]{Bobadilla-Kollar2012}). Note that this conjecture is still wildly open. It is even not known for morphisms of relative dimension 2.

Theorem \ref{thm:simple-smooth} shows that if the Albanese variety of a variety $X$ is simple, then the space of holomorphic 1-forms with zeros on $X$ is either the zero vector space or the whole vector space of holomorphic 1-forms. For a general smooth projective variety $X$, we aim to describe the structure of the space of holomorphic 1-forms
with zeros and its lower level sets:
\[
W^i(X):=\{\omega\in H^0(X, \Omega_X^1)\ |\ \codim Z(\omega)\leq i \},
\] 
where $Z(\omega)$ is the set theoretic zero loci of $\omega$. These lower level sets, introduced by Budur-Wang-Yoon \cite{BWY16}, are all algebraic cones nailed to the origin of $H^0(X,\Omega_X^1)$. Each lower level set $W^i(X)$ contains \textit{linear subsets}\footnote{We say a subset of a complex vector space a \textit{linear subset}, if it is a union of finitely many vector subspaces.} $V^i(X)$ (see Section \ref{sect:deg-1-form-linear} for the definition) consisting of holomorphic 1-forms arising from cohomology jump loci. The linearity of $V^i(X)$ is essentially coming from the formality of the variety $X$. However, the structure of $W^i(X)$ is much more delicate and encodes much finer algebraic nature of the variety $X$. For the structure of the whole space of holomorphic 1-forms with zeros, we prove the following theorem.

\begin{theorem}[{=Theorem \ref{thm:linearity-zeros-gmm}, \ref{prop:simple-linear}, \ref{prop:23linearity}}]\label{prop:gmm-sav-linear}
    Let $X$ be a smooth projective variety of dimension $n$. If either
    
    1) $X$ is a good minimal model, or

    2) $X$ has simple Albanese variety, or

    3) $n\leq 3$,
    
    \noindent then the space $W^n(X)$ of holomorphic 1-forms with zeros is a union of finitely many vector subspaces of $H^0(X,\Omega_X^1)$. 
\end{theorem}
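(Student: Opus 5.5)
The plan is to treat the three cases separately. In each one I want to show that $W^n(X)$, the set of 1-forms with zeros, is a finite union of subspaces. The common tool will be this: if $W^n(X)$ is contained in a finite union of linear subspaces and meets each one in a dense constructible subset containing the generic point, then closedness of $W^n(X)$ gives equality with that union. So the goal in every case is to exhibit such subspaces. The natural candidates are pullbacks $g^*H^0(B,\Omega^1_B)$ along quotient maps $\mathrm{Alb}(X)\to B$ (or, dually, kernels of such pullbacks), as they arise from cohomology jump loci $V^i(X)$.

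\textbf{Case 2 (simple Albanese).} Here $f=\alb_X\colon X\to A$ with $A$ simple. If some $\omega\in H^0(X,\Omega^1_X)=\alb^*H^0(A,\Omega^1_A)$ has no zero, then Theorem \ref{thm:simple-smooth} shows $f$ is smooth, so no form has a zero and $W^n(X)=\{0\}$. Otherwise every form has a zero and $W^n(X)=H^0(X,\Omega^1_X)$. Either way $W^n(X)$ is linear.

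\textbf{Case 1 (good minimal model).} I would use the Iitaka fibration $X\to Z$ and the known result that, for $K_X$ semiample, $X$ admits a nowhere vanishing 1-form only when it is an isotrivial-type fibration, in the spirit of Hao--Schreieder. Concretely, after an étale cover $X$ should split up to isogeny as $A'\times F$, or at least fibre over an abelian variety $B$ via the Albanese of the Iitaka base. I would then identify $W^n(X)$ with a finite union of kernels $\ker\bigl(H^0(\Omega^1_X)\to H^0(\Omega^1)\text{ of a subvariety}\bigr)$, using the fact that a form $\omega$ vanishes somewhere iff its restriction to some fibre of the Albanese image decomposition vanishes. For the linearity I would invoke the Popa--Schnell-type result that $Z(\omega)\neq\emptyset$ for forms pulled back from abelian quotients through which $K_X$ has positive Iitaka dimension; this reduces the problem to finitely many abelian quotients by rigidity of abelian subvarieties.

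\textbf{Case 3 ($n\le 3$).} For curves the statement is trivial: either $g\ge2$ and every form has zeros, or $g=1$ and $W=\{0\}$, or $g=0$. For surfaces and threefolds I would use the classification of varieties with nowhere vanishing forms in low dimension, in particular the results of Schreieder and Church in dimension $\le 4$ showing that zero loci are controlled by the Stein factorization of maps to abelian varieties. The claim to establish is that $\omega\notin W^n$ iff $\omega$ is pulled back from some quotient $B$ over which $X$ is smooth after a birational modification. Since there are countably many abelian quotients and the bad locus is constructible, only finitely many such quotients occur.

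The \textbf{main obstacle} is Case 3, and partly Case 1. There one must show that the complement of $W^n(X)$ is exactly a union of complements of finitely many subspaces, ruling out a genuinely nonlinear cone of forms with zeros. I would first attempt to reduce to showing that $W^n(X)$ coincides with a union of the jump-loci subspaces $V^i(X)$, checking this fibration type by fibration type.
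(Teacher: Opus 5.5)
Your Case 2 is correct and is the paper's proof of Theorem \ref{prop:simple-linear}: apply Theorem \ref{thm:simple-smooth} to $\alb_X$, so $W^n(X)$ is either $\{0\}$ or all of $H^0(X,\Omega^1_X)$.

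\textbf{Case 1.} This is not yet a proof, because the step that actually produces linearity is missing. The paper argues as follows. If $X$ has no nowhere vanishing form, then $W^n(X)=H^0(X,\Omega^1_X)$. Otherwise \cite{Chu24}, \cite{HWZ26} give an isotrivial smooth fibration $X\to A$ onto a positive-dimensional abelian variety that splits after a finite \'etale base change, $X\times_AA'\cong Y\times A'$, with $K_Y$ again semiample. Iterating (equivalently, inducting on dimension) yields a finite \'etale $\pi\colon\tilde X\cong\tilde Y\times B\to X$ with $\tilde Y$ carrying \emph{no} nowhere vanishing form. Every form with nonzero $B$-component then has no zeros, while every form from $\tilde Y$ has zeros, so $W^n(\tilde X)=p_1^*H^0(\tilde Y,\Omega^1_{\tilde Y})$. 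Since $\pi$ is \'etale, $\omega$ has zeros iff $\pi^*\omega$ does, so $W^n(X)=(\pi^*)^{-1}W^n(\tilde X)$ is a single subspace.

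You gesture at a splitting, but you miss two points. First, you never arrange that the non-abelian factor has no nowhere vanishing form; without this, $W^n(F\times A')=p_1^*W^{\dim F}(F)$ and you need the induction, using that $K_F$ stays semiample. Second, you never descend along the \'etale cover. The detour through the Iitaka fibration, kernels of restriction maps, a Popa--Schnell-type statement and rigidity does not establish linearity.

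\textbf{Case 3.} The missing ingredient is the blow-up Lemma \ref{lem:linearity-blowup}. For $\pi\colon Y=\Bl_CX\to X$ with smooth center $C$ of dimension $d$, one has $W^n(Y)=\pi^*W^n(X)\cup\pi^*\{\omega\mid Z(\omega|_C)\neq\emptyset\}$. Hence linearity of $W^n(X)$ and $W^d(C)$ passes to $Y$. The paper combines this with Theorem \ref{thm:linearity-zeros-gmm} and the classifications \cite[Corollary 3.1]{Sch21}, \cite[Theorem 1.3]{HS21}.

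Your proposed criterion is false as stated. On $X=\Bl_p(E\times\PP^1)$ every form is pulled back from $E$, over which the birational model $E\times\PP^1$ is smooth. Yet every nonzero form vanishes at a point of the exceptional curve. Even a correct criterion of this kind would only describe the open set of nowhere vanishing forms. It would not show that $W^n(X)$ is a finite union of subspaces. The finiteness step via ``countably many quotients plus constructibility'' is also unsupported.

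Your fallback of proving $W^n(X)=\bigcup_iV^i(X)$ cannot succeed either, because non-formal forms exist, as the paper's last section discusses. Take an elliptic surface $S$ over an elliptic curve with $q(S)=1$ whose singular fibres are all multiples of smooth elliptic curves, e.g. a suitable free quotient $(C\times F)/G$. Then $e(S)=0$, and for $0\neq\omega\in H^0(S,\Omega^1_S)$ the complex $(H^\bullet(S,\CC),\wedge\omega)$ is exact in every degree. So $V^i(S)=\{0\}$ for all $i$, whereas $W^2(S)=H^0(S,\Omega^1_S)\neq0$.
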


Theorem \ref{prop:gmm-sav-linear} with assumption 1) can be deduced from the main result of \cite{HWZ26}, and Theorem \ref{prop:gmm-sav-linear} with assumption 2) is a consequence following from Theorem \ref{thm:simple-smooth}. Theorem \ref{prop:gmm-sav-linear} with assumption 3) follows from the classification of surfaces and threefolds with nowhere vanishing holomorphic 1-forms \cite{Sch21} and \cite{HS21}. In general, one could expect that at least the birational modification would make the structure of the space of holomorphic 1-forms with zeros rather delicate and out of control. The structure of $W^n(X)$ relies on a concrete description of the birational operations involved in the minimal model program from the smooth projective variety $X$ to its minimal models.

As for the lower level sets, Wang-Zhang \cite{WZ25} provides an example of a threefold $X$ arising from a careful blowup of a three dimensional subvariety $Y$ of an abelian fourfold along a general curve in $Y$ such that $W^2(X)$ is not linear. However, for smooth projective subvarieties $X$ of abelian varieties, all the lower level sets $W^i(X)$ were expected to be linear by experts. Surprisingly, we construct an explicit example of a smooth projective fourfold $X$, which is a subvariety of an abelian variety, such that $W^3(X)$ is not linear.

\begin{theorem}[{=Theorem \ref{thm:non-linear example}}]\label{main-thm:nonlinear-ex}
There exists a four dimensional smooth projective subvariety $X$ of an abelian sixfold, such that the third lower level set $W^3(X)$ is not linear.
\end{theorem}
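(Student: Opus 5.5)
We need a construction: a smooth fourfold X in an abelian sixfold A with W^3(X) non-linear. Here W^3(X) is the set of 1-forms ω whose zero locus has codim ≤ 3, i.e. dim Z(ω) ≥ 1.

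Since X ⊂ A with h^0 presumably equal to 6, a 1-form on X is the restriction of some ω ∈ H^0(A,Ω^1_A) ≅ T_0^*A. For such ω, Z(ω|_X) is the set of points x where ω annihilates T_xX, i.e. where the Gauss image γ(x) ∈ Gr(4,6) lies in the hyperplane ker ω. Equivalently, consider the conormal variety
\[
\Lambda_X=\{(x,\omega): \omega|_{T_xX}=0\}\subset X\times \CC^6,
\]
a rank-2 bundle over X, so dim Λ_X = 6, together with its projection p : Λ_X → \CC^6. Then W^3(X) is the locus of ω with dim p^{-1}(ω) ≥ 1, that is, the positive-dimensional fibre locus of a map between 6-dimensional spaces. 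Linearity would mean this jump locus is a finite union of linear subspaces. My plan is to produce X whose jump locus contains a non-linear irreducible component, for instance a quadric cone or a twisted cubic cone in a 2- or 3-dimensional subspace.

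The construction I would try goes through products and sums. Take A = A_1 × A_2 with A_1 an abelian surface and A_2 an abelian fourfold, or use curves C_i in their Jacobians. A natural candidate is a product-type subvariety, e.g. X = S × T with S ⊂ A_1 and T ⊂ A_2 of suitable dimensions. The conormal of a product is the product of conormals, so the zero locus of ω = (ω_1, ω_2) is Z(ω_1) × Z(ω_2). This gives a union of products of jump loci, and such sets tend to be linear-ish, so it probably won't suffice. Instead I would use a smooth divisor or complete intersection X in a product like C_1 × C_2 × C_3 × S, or a sum-type image. For example, take X to be a smooth ample divisor in a fivefold Y = C × B ⊂ J(C) × B, where C is a genus-2 curve. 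In that setting, forms vanishing along curves in X arise from the intersection of X with fibres where the restricted form vanishes on Y. The extra condition that X is tangent to ker ω along a curve is what produces non-linear (determinantal) conditions on ω.

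Concretely, the key steps in order are:

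1. Fix a base Y ⊂ A (a product of curves and abelian factors) where the zero loci Z(ω|_Y) are explicitly computable; for curves, these are the canonical divisors of the linear pieces.
2. Choose X ⊂ Y smooth, e.g. a general member of a very ample linear system, or the preimage of a curve under a projection, which makes X a fibration.
3. Compute dim Z(ω|_X) as a function of ω. Set-theoretically, Z(ω|_X) ⊃ Z(ω|_Y) ∩ X, plus points where the normal direction compensates. The criterion becomes a fibrewise condition on a family parametrised by ω, namely whether a section of a line bundle vanishes on a curve.
4. Show that the locus where the relevant curve of zeros exists is cut out by an irreducible non-linear equation, such as a discriminant or resultant in the coefficients of ω. This is done by exhibiting three forms ω_1, ω_2 in W^3(X) whose span is not contained in W^3(X) and which do not lie in a common linear component.
5. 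Conclude using the fact that W^3(X) is a closed cone: a finite union of subspaces that contains an irreducible non-linear cone component is impossible.

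The main obstacle is step 3/4: controlling the zeros that appear only on X (not inherited from Y) and proving the resulting locus is genuinely non-linear while keeping X smooth. My first attempt would be to take X = π^{-1}(D), where π : Y → S is a projection to an abelian surface factor and D is a smooth curve. Then 1-forms pulled back via the curve direction vanish exactly where the Gauss map of D meets ker ω. This gives a curve of zeros for ω in the dual-curve cone of D, which is non-linear when D is not a translate-type curve.
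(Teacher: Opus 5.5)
There is a genuine gap. Your outline never fixes a construction, and the two concrete candidates it does describe provably give a \emph{linear} $W^3(X)$.

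\textbf{First candidate.} For $X=\pi^{-1}(D)$ with $\pi\colon Y\to S$ the projection to an abelian surface factor, $X\cong D\times T$ is a product. So $Z(\omega|_X)=Z(\omega_1|_D)\times Z(\omega_2|_T)$, which is exactly the situation you yourself discarded. Moreover, the ``dual-curve cone of $D$'' is a closed cone in the $2$-dimensional space $H^0(S,\Omega^1_S)$. Any such cone is $\{0\}$, finitely many lines, or everything. Here it is everything, since $\omega_1|_D$ is a nonzero section of $K_D$ with $g(D)\geq 2$. So no non-linearity can come from $D$.

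\textbf{Second candidate.} Let $X$ be a smooth ample divisor in $Y=C\times B$. If $\omega$ has no zero on $Y$, then $\omega|_X$ has only isolated zeros. Indeed, a curve $\Gamma\subseteq Z(\omega|_X)$ would make $\omega$ a nowhere vanishing section of the anti-ample bundle $\mathcal{N}^*_{X/Y}|_\Gamma$. Every form with nonzero $B$-component is nowhere zero on $C\times B$, and $H^0(X,\Omega^1_X)=H^0(Y,\Omega^1_Y)$ by Lefschetz. Hence $W^3(X)\subseteq H^0(C,\Omega^1_C)\cong\mathbb{C}^2$, which is again automatically linear.

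The same argument rules out your hoped-for mechanism of $X$ being ``tangent to $\ker\omega$ along a curve'' for forms not already vanishing on $Y$. Your heuristic that curves of zeros on $X$ come from zero loci on the ambient fivefold is the right one. But it only yields non-linearity if those ambient zero loci are \emph{curves} and $X$ is a \emph{special} divisor containing a non-linearly parametrised family of them. A general ample $X$ just cuts each ambient zero locus down by one dimension.

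Your step 5 is also only valid once you know the non-linear cone is an irreducible \emph{component} of $W^3(X)$. A non-linear cone may well lie inside a linear subspace contained in $W^3(X)$. Two forms whose span is not in $W^3(X)$ prove nothing either: think of two lines.

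\textbf{What the paper does.} The paper supplies exactly the missing ingredients.
\begin{enumerate}
\item It takes $Y$ a general hyperplane section of a simple abelian fivefold $A$, with $k\colon Y\hookrightarrow A$. It picks a curve $C\subset Y$ whose Gauss image in $\mathbb{P}^4$ is nondegenerate (Wang--Zhang). Then $\mathcal{S}_1=\{\omega : \omega|_Y \text{ vanishes at some point of } C\}$ is an irreducible non-linear cone spanning the $5$-dimensional space $H^0(A,\Omega^1_A)$.
\item For an elliptic curve $E$, it uses strong Bertini (since $2\dim(E\times C)\leq\dim(E\times Y)$) to choose a smooth hyperplane section $X$ of $E\times Y$. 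This $X$ contains $E\times C$ but no other fibre $E\times\{y\}$. Forms in $\mathcal{S}_1$ then vanish along some $E\times\{y\}$.
\item With $\pi\colon X\to Y$ the projection, the ampleness argument above gives $\pi^*k^*\mathcal{S}_1\subseteq W^3(X)\subseteq\pi^*k^*H^0(A,\Omega^1_A)$.
\item Since $\mathcal{S}_1$ is irreducible and spanning, linearity would force $W^3(X)$ to be the whole $5$-dimensional space. The real work is to show that a \emph{general} form has only isolated zeros on $X$. This uses three things: finiteness of $\pi$ off $E\times C$; the bound $\dim S_f\leq\dim A$ for the image of the zero correspondence of $f=k\circ\pi$; and a tangent-map analysis along $E\times C$, where $\ker\pi_{*,x}=T_eE$. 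Together these show that forms vanishing along some $E\times\{y\}$ form a family of dimension at most $3$.
\end{enumerate}
None of these steps has a counterpart in your outline.
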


Theorem \ref{main-thm:nonlinear-ex} answers a question \cite[Question 1.10]{DHL24}. Analyzing the non-linearity of $W^3(X)$ is rather delicate, but the construction of $X$ in Theorem \ref{main-thm:nonlinear-ex} can be easily described as follows. Let $A$ be a simple abelian 5-fold and $E$ be an elliptic curve. Take a general hyperplane section $Y$ of $A$ and a general smooth curve $C$ in $Y$. We carefully choose a hyperplane section $X$ of $Y\times E$ that passes through $C\times E$ such that $X$ is smooth and in a nice position. Then $X$ is a smooth subvariety of $A\times E$, and one can show $W^3(X)$ is not linear. 

The example in Theorem \ref{main-thm:nonlinear-ex} shows that the holomorphic 1-forms with zeros that are not from cohomology jump loci are mysterious and should reflect deep algebro-geometric nature of varieties. In the last part of this article, we provide a cohomological criterion for this kind of holomorphic 1-forms, which is easily deduced from \cite{GL87}. Also, we study smooth projective surfaces admitting holomorphic 1-forms that have zeros and do not come from cohomology jump loci.

\noindent \textbf{Acknowledgment} We would like to thank Nero Budur, Yongqiang Liu, Stefan Schreieder, and Botong Wang for very useful conversations and comments, and also thank the Shanghai Institute for Mathematics and Interdisciplinary Sciences (SIMIS) for providing an excellent research environment for discussions related to this work. The second named author is supported by NSFC No.1240010723, SDNSFC (No. 2024HWYQ-009, No. ZR2024MA007, No. tsqn202312060).

\section{Limit relative conormal vectors and morphism to simple abelian varieties}\label{sect:smooth-mor-simpleAV-1-form}

In this section, we provide a fine study on holomorphic 1-forms and morphisms from smooth projective varieties to simple abelian varieties.

Let $f\colon X\to Y$ be a proper morphism between smooth varieties, and $X^\circ$ be the maximal open subset of $X$ such that $f|_{X^\circ}$ is smooth.
Denote the cotangent bundle of $X$ by $T^*X$ and the scheme theoretic fibre of $f$ over $y$ by $F_y$ for any closed point $y\in Y$. The \textit{relative conormal  space} $T_f^*X$ is defined to be the closure of the subvariety $\{ (x,\eta)\in T^*X|_{X^\circ} \  | \  \eta(T_xF_{f(x)} ) = 0   \}$ in $T^*X$, where $T_xF_{f(x)}$ is the tangent space of the fibre $F_{f(x)}$ at the closed point $x$. Recall the following concept introduced by Sabbah \cite[Definition 3.1]{Sab85}.

\begin{definition}\label{sans  \'eclatement en codimension 0}
We say that a proper morphism $f\colon X\to Y$ between smooth varieties is \textit{without blow-up  in codimension 0}, if the natural composition morphism $T^*_fX\to X\to Y$ is equidimensional.
\end{definition}

\begin{theorem}\label{thm:homological-trivial-deformation-no-blow-up-codim0}
  Let $f\colon X\to Y$ be a proper morphism between algebraic varieties, where $X$ is smooth and $Y$ is normal. Suppose $R^if_*\mathbb{Z}$ are $\mathbb{Z}$-local systems on $Y$ for all integers $i$, then $Y$ is smooth and $f$ is without blow-up in codimension 0.
\end{theorem}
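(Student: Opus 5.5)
The plan has two parts. First I prove that $Y$ is smooth and that $f$ is equidimensional with generically reduced fibres. Then I deduce the absence of blow-up in codimension $0$ by a microlocal (characteristic cycle) argument. Throughout, let $n=\dim X$, $m=\dim Y$, $d=n-m$, and let $F_y$ be the fibre over $y$.

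\emph{Step 1: the fibres.} Since $R^if_*\mathbb{Z}$ is a local system and $f$ is proper, proper base change gives $H^i(F_y,\mathbb{Z})\cong H^i(F_{y'},\mathbb{Z})$ for $y'$ general near $y$, via the specialization map. Comparing top degrees shows that $F_y$ has no component of dimension $>d$, so $f$ is equidimensional of relative dimension $d$. Next I compare $H_{2d}(F_y,\mathbb{Z})$, which is free on the $d$-dimensional components of $F_y$, with $H_{2d}(F_{y'},\mathbb{Z})$. The specialization map sends the class of a component of $F_{y'}$ to a sum of components of $F_y$ weighted by their multiplicities in the cycle $f^*[y]$. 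Since this map is an isomorphism of free $\mathbb{Z}$-modules with nonnegative integer matrix, each component of $F_y$ must have multiplicity one. Hence every fibre is generically reduced.

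\emph{Step 2: $Y$ is smooth.} Pick a point $x\in F_y$ that is a smooth point of $(F_y)_{\rm red}$ and at which $F_y$ is reduced. Then the scheme-theoretic fibre is smooth of dimension $d$ at $x$. Take a germ of a smooth $m$-dimensional subvariety $Z\subset X$ through $x$, transversal to $F_y$. The map $Z\to Y$ is quasi-finite and unramified at $x$, since $Z\cap F_y=\{x\}$ scheme-theoretically. Its local degree near $x$ equals the local intersection number of $Z$ with the fibre class, which is $1$ by Step 1. So $Z\to Y$ is finite and birational near $y$, and normality of $Y$ (Zariski's main theorem) makes it a local isomorphism. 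Hence $Y$ is smooth at $y$; moreover $f$ is smooth at $x$. In particular $T^*_fX$ is defined, and $\dim T^*_fX=n$.

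\emph{Step 3: no blow-up in codimension $0$.} I need every fibre of $T^*_fX\to Y$ to have dimension $\le n-m$. Suppose instead that over some $y$ there is a component $\Lambda_y$ of dimension $>d$. The first thing I would try is a characteristic cycle computation. By Sabbah's and Kashiwara--Schapira's microlocal pushforward, the characteristic cycle of $Rf_*\mathbb{Z}_X$ is computed from $f_*$ of the zero section. By hypothesis it is supported on the zero section of $T^*Y$, since $Rf_*\mathbb{Z}$ has locally constant cohomology. For a general covector $\xi\in T^*_yY$ and a local function $h$ on $Y$ with $dh(y)=\xi$, I compute the vanishing cycles $\phi_{h\circ f}\mathbb{Z}_X$ along $F_y$. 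Their Euler characteristic must vanish, because $\phi_h Rf_*\mathbb{Z}=0$ for a local system on smooth $Y$. On the other hand, an excess component $\Lambda_y$ should force $d(h\circ f)$ to meet the limit conormal directions along a positive-dimensional piece. That should produce a nonzero contribution.

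To make this rigorous I would first reduce to $m=1$ by restricting to a general smooth curve through $y$. Here Step 1 guarantees that $f^{-1}$ of the curve stays reduced and that the local system hypothesis is preserved. In the case $m=1$ I would use the Lê--Teissier/Sabbah relation between relative polar curves, Milnor numbers, and the excess fibre of $T^*_fX$.

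This last step is the main obstacle. Euler-characteristic cancellation could hide an excess component, so I expect to need a positivity statement for the multiplicities of the conormal cycles involved, via Lagrangian specialization. The integrality used in Step 1, that the fibres are reduced, is what I would exploit to exclude such cancellation.
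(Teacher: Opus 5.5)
Steps 1–2 mostly reprove what the paper imports from de Bobadilla–Koll\'ar \cite{Bobadilla-Kollar2012}: $Y$ is smooth and $f$ is flat with integral fibres, so $X\setminus X^\circ$ is exactly the set of singular points of fibres. That part can be repaired. One fix is needed: a matrix in $GL_r(\mathbb{Z})$ with nonnegative entries can have an entry $2$, e.g.\ $\left(\begin{smallmatrix}2&1\\1&1\end{smallmatrix}\right)$. So you should first Stein-factorize to reduce to connected fibres, where $H_{2d}(F_y,\mathbb{Z})\cong\mathbb{Z}$ gives irreducibility and multiplicity one at once.

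The genuine gap is Step 3. It is a programme rather than a proof, and its set-up is wrong. Since $T^*_fX|_{X^\circ}$ is a rank-$m$ bundle over $X^\circ$, you have $\dim T^*_fX=n+m$. Equidimensionality of $\pi\colon T^*_fX\to Y$ therefore means every fibre has dimension $n$. Already over a regular value the fibre is $\mathcal{N}^*_{F_y/X}$, of dimension $(n-m)+m=n$, so the bound $n-m$ you aim for is false.

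More importantly, neither proposed tool detects the phenomenon. $CC(Rf_*\mathbb{Z}_X)$ is an $m$-cycle on $T^*Y$ computed from $\{(x,\xi)\mid f_x^*\xi=0\}$. Knowing it is a multiple of the zero section gives no control over set-theoretic excess in $\pi^{-1}(y)\subset T^*X$. The points of that excess are limits of relative conormal covectors, which need not be of the form $f_x^*\xi$. The implication in \cite{Sab85} runs from no blow-up to Lagrangian images, not back.

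The reduction to $m=1$ is also vacuous. Over a smooth curve, $T^*_fX$ is irreducible and dominant, hence flat. So \emph{every} morphism to a curve is without blow-up in codimension $0$. Restricting to $f^{-1}(\Gamma)$ discards exactly the limits approached from outside $\Gamma$ and the covector directions normal to $f^{-1}(\Gamma)$.

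For comparison, the paper avoids microlocal tools. For $y_0$ in the discriminant, it argues that $\pi^{-1}(y_0)$ equals the closure of $T^*_fX|_{F^\circ_{y_0}}=\mathcal{N}^*_{F^\circ_{y_0}/X}$, which is irreducible of dimension $n$ because $F_{y_0}$ is integral. It does this by writing relative conormal covectors near $x_0$ as $\sum_i\alpha_i^kf^*dz_i(a_k)$ and moving the base points to $b_k\in F^\circ_{y_0}$ with the same coefficients.

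This transfer is automatic only when the $\alpha_i^k$ stay bounded, and unbounded coefficients are exactly where excess limits arise. Take a Lefschetz fibration of relative dimension $\geq 2$, which is flat with integral fibres. At a node $x_0$ with $f=\sum x_i^2$, one has $\{x_0\}\times T^*_{x_0}X\subset\pi^{-1}(y_0)$. But the closure of $\mathcal{N}^*_{F^\circ_{y_0}/X}$ contains over $x_0$ only the $(n-1)$-dimensional cone $\{\sum v_i^2=0\}$.

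So the crux you flagged is real. An argument using the homology-bundle hypothesis to rule out excess limiting covectors is needed at exactly this point, and your Step 3 does not provide one.
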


\begin{proof}
The smoothness of $Y$ is due to de Bobadilla-Koll\'ar \cite[Corollary 7]{Bobadilla-Kollar2012}. Again by \cite[Corollary 7]{Bobadilla-Kollar2012} and \cite[Proposition 6]{Bobadilla-Kollar2012}, we have $f$ is flat, and each scheme theoretic fibre is generically reduced, irreducible and a local complete intersection. Hence each scheme theoretic fibre is integral. Then $f$ is smooth at a closed point $x\in X$ if and only if $x$ is a smooth closed point of the fibre $F_{f(x)}$ by \cite[Proposition 6]{Bobadilla-Kollar2012}, i.e., $X-X^\circ$ is exactly the set of singular points of singular fibres of $f$. 

Consider the composition map $\pi\colon T_f^*X\to X\to Y$. We need to show $\pi$ is equidimensional. Denote the discriminant of $f$ by $$\Delta(f) \coloneqq \{y\in Y\ |\  f\ \text{is singular along some point in}\ f^{-1}(y)\}.$$ The previous discussion shows $$\Delta(f) = \{y\in Y\ |\ F_y\ \text{is singular}\}.$$

For any closed point $y\in Y-\Delta(f)$, it is clear that  the fibre $\pi^{-1}(y)$ is exactly the total space of the conormal bundle $\mathcal{N}^*_{F_y/X}$, and $\dim \pi^{-1}(y)=\dim \mathcal{N}^*_{F_y/X}=\dim X$. Fix a closed point $y_0 \in \Delta(f)$, then $F_{y_0}$ is singular. Denote the nonempty smooth locus of $F_{y_0}$ by $F^\circ_{y_0}$. Then by the previous discussion, $F^\circ_{y_0}=X^\circ\cap F_{y_0}$. By the definition of relative conormal space, it is clear that $$T^*_fX|_{F^\circ_{y_0}}=\mathcal{N}^*_{F^\circ_{y_0}/X}.$$
Since $F_{y_0}$ is integral, $T^*_fX|_{F^\circ_{y_0}}$ is  irreducible, so is the closure $\overline{T_f^* X|_{F^\circ_{y_0}}}^{T^*X}$ of  $T_f^* X|_{F^\circ_{y_0}}$ in $T^*X$. Also, $$\dim \overline{T_f^* X|_{F^\circ_{y_0}}}^{T^*X}=\dim T_f^* X|_{F^\circ_{y_0}}=\dim X.$$ Hence to show $\pi$ is equidimensional, it suffices to show
$\pi^{-1}(y_0) = \overline{T_f^* X|_{F^\circ_{y_0}}}^{T^*X}$. Note that $T_f^* X|_{F^\circ_{y_0}}\subset \pi^{-1}(y_0)$ and $\pi^{-1}(y_0)$ is closed in both $T^*_fX$ and $T^*X$. Thus we just need to show   $\pi^{-1}(y_0) = \overline{T_f^* X|_{F^\circ_{y_0}}}^{\pi^{-1}(y_0)}$, i.e., to show $T_f^* X|_{F^\circ_{y_0}}$ is dense in $\pi^{-1}(y_0)$.  To this end, for any fixed closed point $x_0\in F_{y_0}-F^\circ_{y_0}$, we only need to prove for any Cauchy sequence $\{(a_k,\eta_k)\}_{k\in \mathbb{Z}^+}$ in $T_f^*X|_{X^\circ}$ with limit point $(x_0, \eta_0)\in T^*X$, there exists a Cauchy sequence $\{(b_k,\mu_k)\}_{k\in \mathbb{Z}^+}$ in $T_f^*X|_{F^\circ_{y_0}}$ which has the same limit point $(x_0, \eta_0)\in T^*X$, as is described in Figure \ref{fig:fibration}. 
\begin{figure}[htbp]
\centering
\includegraphics[width=0.8\textwidth]{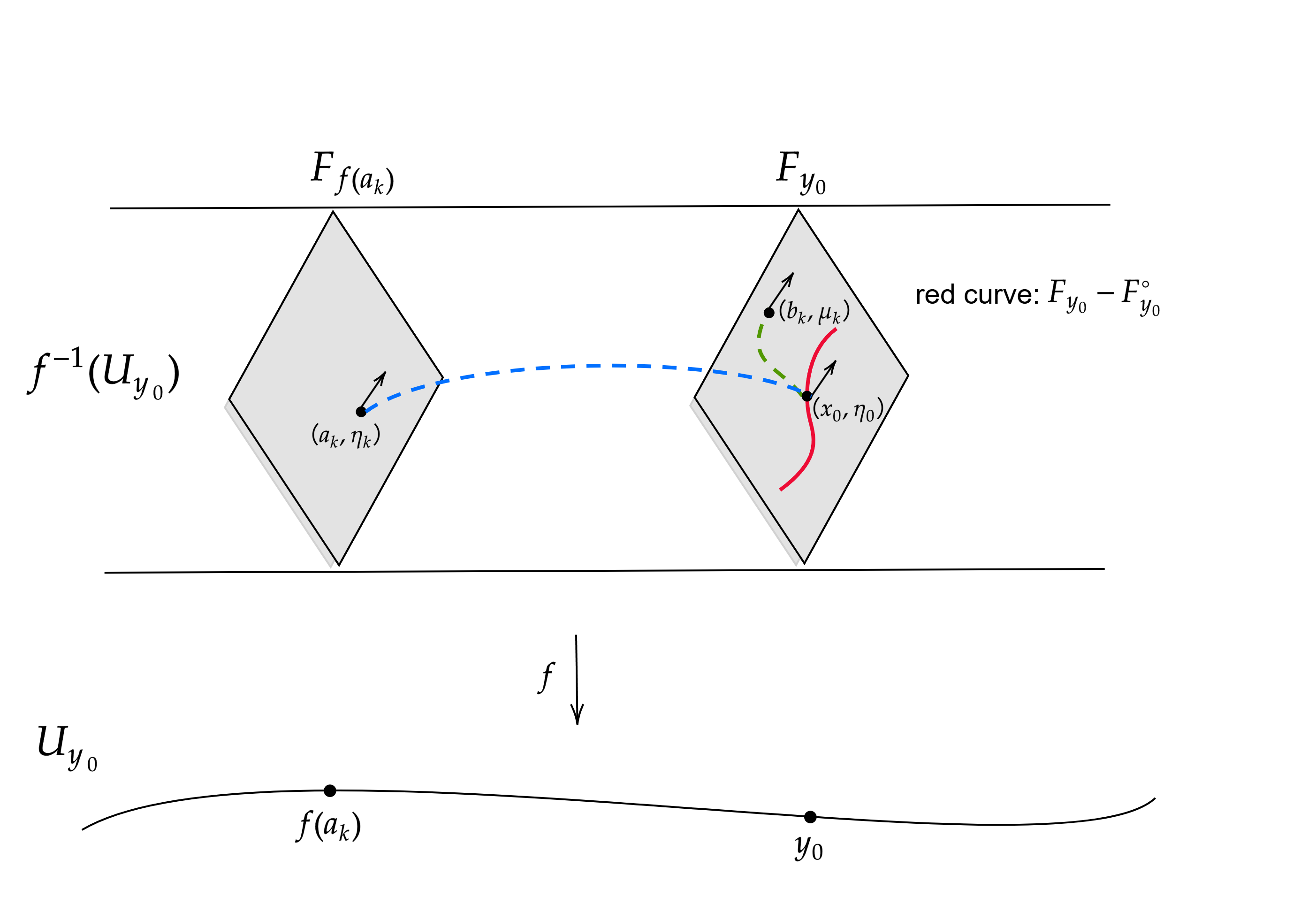}
\caption{}\label{fig:fibration}
\end{figure}

Assume $\dim Y = m$ and $\dim X=n$, take a small open ball $U_{y_0}$ together with complex analytic coordinates $z_1, \ldots, z_m$ around $y_0$. Then $dz_1, \ldots, dz_m$ are nowhere vanishing pointwise linearly independent local holomorphic 1-forms on $U_{y_0}$. For any closed point $x\in X^\circ\cap f^{-1}(U_{y_0})$ at which $f$ is smooth, the cotangent map $f^*_x\colon T^*_{f(x)}Y\to T^*_xX$ is injective (the dual of the surjective tangent map at $x$). Thus $$f^*_x(T_{f(x)}^*Y)=\text{Span}\langle f^*dz_1(x),...,f^*dz_m(x)\rangle,$$ and $\dim f^*_x(T_{f(x)}^*Y)=m$. Moreover, we have $$f^*_x(T_{f(x)}^*Y)\subseteq\{\alpha\in T^*_xX\ |\ \alpha(T_xF_{f(x)})=0\}=\text{the fibre of\ }T_f^*X|_{X^{\circ}}\ \text{over} \ x.$$ Since $f$ is flat, $\dim T_xF_{f(x)}=n-m$. Hence $\dim \{\alpha\in T^*_xX\ |\ \alpha(T_xF_{f(x)})=0\}=m$ as $\dim T^*_xX=n$.
Therefore, we get 
\begin{equation}\label{eq:fibre-localcoord-represent}
  \text{the fibre of\ }T_f^*X|_{X^{\circ}}\ \text{over} \ x=f^*_x(T_{f(x)}^*Y)=\text{Span}\langle f^*dz_1(x),...,f^*dz_m(x)\rangle.  
\end{equation}
Given any Cauchy sequence $\{(a_k,\eta_k)\}_{k\in \mathbb{Z}^+}$ in $T_f^*X|_{X^\circ \cap f^{-1}(U_{y_0})}$ with limit point $(x_0, \eta_0)\in T^*X$. Since $a_k\in X^\circ \cap f^{-1}(U_{y_0})$, we have $\eta_k \in \text{span}\langle f^*dz_1(a_k),...,f^*dz_m(a_k)\rangle$ by Equation (\ref{eq:fibre-localcoord-represent}). Denote $\eta_k = \sum_{i=1}^m \alpha_i^k f^*dz_i(a_k)$, where $\alpha_i^k\in\mathbb{C}$. Now consider the sequence $\{(b_k,\mu_k)\}$ defined as follows: Choose $b_k$ to be any Cauchy sequence converging to $x_0\in F_{y_0}-F^\circ_{y_0}$ such that $b_k\in F^{\circ}_{y_0}$. Define $$\mu_k = \sum_{i=1}^m \alpha_i^k f^*dz_i(b_k),$$ then $$\mu_k\in f^*_{b_k}(T^*_{y_0}Y)=\text{the fibre of\ }T_f^*X|_{X^{\circ}}\ \text{over} \ b_k=\text{the fibre of\ }T_f^*X|_{F^\circ_{y_0}}\ \text{over} \ b_k$$
Thus $(b_k,\mu_k)\in T_f^*X|_{F^\circ_{y_0}}$ for any $k$. Since $\{f^*dz_1, \ldots, f^*dz_m\}$ are holomorphic 1-forms on $f^{-1}(U_{y_0})$, we conclude by continuity that the sequence $\{(b_k,\mu_k)\}_{k\in\mathbb{Z}^+}$ is indeed a Cauchy sequence and has the same limit point $(x_0, \eta_0)\in T^*X$ as $\{(a_k,\eta_k)\}_{k\in\mathbb{Z}^+}$.
\end{proof}

Next we have the following two lemmas, which will be used in the proof of Theorem \ref{thm:simple-smooth}.

\begin{lemma}[{\cite[Proposition 3.2]{DHL24}}]\label{prop:DHL}
   Let $A$ be an abelian variety and $Z$ be a proper irreducible subvariety of $A$. Then the following are equivalent
   
a) $Z$ is not fibred by tori and $\dim Z > 0$;

b) General holomorphic 1-form $\omega\in H^0(A,\Omega^1_A)$ restricted to $Z^{\text{reg}}$, i.e., $\omega|_{Z^{\text{reg}}}$ admits isolated
zeros on the smooth locus $Z^{\text{reg}}$. 

 In particular, let $B \subseteq A$ be the largest (in dimensional sense) Abelian subvariety such that $Z$ is fibred by $B$, we have $p_2(T_Z^*A) = H^0(A/B,\Omega^1_{A/B})$, where $T_Z^*A$ is the conormal variety\footnote{For a subvariety $Z\subset A$, the conormal variety $T^*_{Z}A$ is defined to be the closure of the conormal bundle of the smooth locus of $Z$ in $T^*A$.} of $Z$ in $A$, and $p_2\colon T^*A\to H^0(A, \Omega_A^1)$ is the natural projection.
\end{lemma}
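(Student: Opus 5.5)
We need to prove Lemma \ref{prop:DHL}: for a proper irreducible $Z\subset A$, $Z$ not fibred by tori with $\dim Z>0$ iff a general $\omega$ restricted to $Z^{\text{reg}}$ has isolated zeros, together with the formula $p_2(T^*_ZA)=H^0(A/B,\Omega^1_{A/B})$. The plan is to go through the conormal variety $T^*_ZA\subset T^*A=A\times H^0(A,\Omega^1_A)$. It is irreducible of dimension $g=\dim A$. A point $(z,\omega)$ with $z\in Z^{\text{reg}}$ lies in $T^*_ZA$ exactly when $\omega|_{T_zZ}=0$, that is, when $z$ is a zero of $\omega|_{Z^{\text{reg}}}$. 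So the zero set of $\omega|_{Z^{\text{reg}}}$ is the fibre of $p_2$ over $\omega$, intersected with $Z^{\text{reg}}$. Condition b) therefore says that $p_2\colon T^*_ZA\to H^0(A,\Omega^1_A)$ has finite general fibre. Since source and target both have dimension $g$, this is equivalent to $p_2$ being dominant, and hence surjective because $p_2$ is proper ($A$ is compact).

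\textbf{Step 1 (b implies a).} Suppose $Z$ is fibred by a positive-dimensional abelian subvariety $B$, meaning $Z+B=Z$. Then $T_zZ\supseteq T_0B$ at every smooth point $z$. Hence every conormal vector kills $T_0B$, and so
\[
p_2(T^*_ZA)\subseteq H^0(A/B,\Omega^1_{A/B}),
\]
which is a proper subspace. This means $p_2$ is not dominant. If $\dim Z=0$, then $Z^{\text{reg}}=Z$ is a point, and $\omega|_Z=0$ everywhere; this gives a zero, which is isolated, so in that case we need the convention that $\dim Z>0$ is built into b). I will read b) as requiring finiteness of the fibres of $p_2$ over $Z^{\text{reg}}$ in the nondegenerate sense, and in the $\dim Z=0$ case I treat it separately as a point-fibration.

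\textbf{Step 2 (a implies b), the main obstacle.} Suppose $p_2$ is not dominant. Then its general fibre $F_\omega$ is positive-dimensional. I would use the Gauss map $\gamma\colon Z^{\text{reg}}\to \mathrm{Gr}(d,g)$, $z\mapsto T_zZ$, where $d=\dim Z$. The fibre of $p_2$ over $\omega$, restricted to the smooth part, is $\gamma^{-1}$ of the Schubert locus $\{W : W\subseteq\ker\omega\}$. Dimension counting on $T^*_ZA$, which is a bundle of rank $g-d$ over $Z^{\text{reg}}$, shows that $p_2$ dominant iff $\gamma$ is generically finite. It is then a classical theorem (Griffiths--Harris, Ran) that the Gauss map of a subvariety of an abelian variety is generically finite unless $Z$ is fibred by tori: the fibres of $\gamma$ are linear, translated by a subtorus. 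I would invoke this result, or prove it by showing that a positive-dimensional component of a general Gauss fibre has constant tangent space, so that it is an open piece of a translate of a subtorus. Spreading this out then gives $Z+B=Z$.

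\textbf{Step 3 (the formula).} Let $B$ be the maximal abelian subvariety with $Z+B=Z$, and let $q\colon A\to A/B$. Then $Z=q^{-1}(q(Z))$, and $q(Z)$ is not fibred by tori, by maximality. Conormal vectors pull back, so $T^*_ZA$ is identified with $q^*T^*_{q(Z)}(A/B)$. By Steps 1–2 applied to $q(Z)\subset A/B$, $p_2$ is surjective onto $H^0(A/B,\Omega^1_{A/B})$. If $q(Z)$ is a point, the conormal fibre is all of $H^0(A/B,\Omega^1)$ directly. This gives the claimed equality.
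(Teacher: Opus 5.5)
The gap is in Step 2, where you assert that dimension counting gives \emph{$p_2$ dominant iff $\gamma$ generically finite}. Write $T^*_ZA|_{Z^{\mathrm{reg}}}=\gamma^*I$, with $I=\{(W,\omega):\omega|_W=0\}$ the incidence bundle over $\mathrm{Gr}(d,g)$. A dimension count then yields only one direction: if $\gamma$ has positive-dimensional fibres, $p_2$ factors through $I|_{\overline{\gamma(Z^{\mathrm{reg}})}}$, which has dimension $<g$. So $p_2$ dominant $\Rightarrow$ $\gamma$ generically finite.

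Your chain \emph{not fibred by tori $\Rightarrow$ $\gamma$ generically finite (Ran) $\Rightarrow$ $p_2$ dominant} uses the converse, and that is not formal. Pointwise, $d\gamma_z$ is injective iff the quadrics $\omega\circ\mathrm{II}_z$ ($\omega$ conormal at $z$) have no common kernel vector. By contrast, $dp_2$ is surjective at $(z,\omega)$ iff $\omega\circ\mathrm{II}_z$ is nondegenerate, and a linear system of quadrics can have the first property without the second. Concretely, the threefold $\{(a,b,c,ab,ac)\}\subset\mathbb{C}^5$ has injective Gauss map: its second fundamental form is the pencil $\dot a(\lambda\dot b+\mu\dot c)$, which has no common kernel. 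Yet every conormal vector satisfies $\omega_2\omega_5=\omega_3\omega_4$, so $p_2$ is not dominant. Hence, after citing Ran/Griffiths--Harris, the real content of a) $\Rightarrow$ b) is still unproved, and so is Step 3, which applies Step 2 to $q(Z)$. It needs a global argument that genuinely uses the abelian variety.

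The missing ingredient is that $T^*_ZA$ is a conic Lagrangian in $T^*A=A\times H^0(A,\Omega^1_A)$ for the translation-invariant symplectic form. Let $P=p_2(T^*_ZA)$. At a general point $(z,\omega)$, isotropy ($\alpha(b)=\beta(a)$ for tangent vectors $(a,\alpha),(b,\beta)$) gives $\ker dp_2=(\operatorname{im}dp_2)^{\perp}=(T_\omega P)^{\perp}\subset T_0A$. So the components of general fibres of $p_2$, viewed as subvarieties of $Z$, have constant tangent space $(T_\omega P)^\perp$ and are therefore translates of abelian subvarieties. There are only countably many abelian subvarieties, so $(T_\omega P)^\perp=T_0B$ is constant on the smooth locus of the irreducible cone $P$, whence $P=(T_0B)^\perp$. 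Since these translates sweep out a dense subset of $Z$, $Z+B=Z$. If $p_2$ is not dominant, $\dim B=g-\dim P>0$, which is exactly what Step 2 needs. Combined with your Step 1 for the maximal $B$, this also gives $p_2(T^*_ZA)=H^0(A/B,\Omega^1_{A/B})$ directly.

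Two smaller points. First, b) has to be read as \emph{nonempty} and finite, since if $p_2$ is not dominant the general fibre is empty. Second, for a) $\Rightarrow$ b) you should note that $T^*_ZA|_{Z^{\mathrm{sing}}}$ has dimension $<g$, so general fibres lie over $Z^{\mathrm{reg}}$. Your caveat about $\dim Z=0$ is correct: as literally stated, b) holds for a point while a) fails.
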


\begin{lemma}\label{lem:smooth-all-forms}
   Let $f\colon X\to A$ be a morphism from a smooth projective variety $X$ to an abelian variety $A$. Then $f$ is smooth, if and only if for any nonzero holomorphic 1-form $\omega\in H^0(A, \Omega_A^1)$, $f^*\omega$ has no zero.
\end{lemma}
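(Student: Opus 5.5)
The proof is a pointwise linear-algebra argument. The key observation is that $H^0(A,\Omega^1_A)\to T^*_aA$ is an isomorphism for every $a\in A$, since $\Omega^1_A$ is trivialized by global translation-invariant forms.

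For a closed point $x\in X$, set $a=f(x)$ and consider the cotangent map $f^*_x\colon T^*_aA\to T^*_xX$. By definition, $f$ is smooth at $x$ if and only if $df_x\colon T_xX\to T_aA$ is surjective, which holds if and only if its dual $f^*_x$ is injective. This uses only that $X$ and $A$ are smooth, so smoothness of $f$ at $x$ is equivalent to surjectivity of the tangent map.

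On the other hand, for $\omega\in H^0(A,\Omega^1_A)$ we have $(f^*\omega)(x)=f^*_x(\omega(a))$. Hence $x$ is a zero of $f^*\omega$ exactly when $\omega(a)\in\ker f^*_x$.

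For the direction ($\Rightarrow$), suppose $f$ is smooth and $\omega\neq 0$. Then $\omega(a)\neq 0$, because a nonzero invariant form vanishes nowhere. Since $f^*_x$ is injective at every $x$, we get $(f^*\omega)(x)\neq 0$ for all $x$.

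For the direction ($\Leftarrow$), suppose $f$ is not smooth at some point $x$. Then $\ker f^*_x$ contains a nonzero covector $v\in T^*_aA$. By the isomorphism above, there is a unique nonzero $\omega\in H^0(A,\Omega^1_A)$ with $\omega(a)=v$, and then $f^*\omega$ vanishes at $x$, contradicting the hypothesis.

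There is no real obstacle. The only points needing care are:
\begin{itemize}
\item the identification of smoothness with surjectivity of the differential, which requires smoothness of source and target;
\item the fact that invariant forms give a global frame of $T^*A$.
\end{itemize}
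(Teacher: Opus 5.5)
Your proposal is correct and takes essentially the same approach as the paper, which likewise identifies smoothness of $f$ with surjectivity of the tangent map at every closed point and dualizes. You also make explicit the step the paper leaves implicit: $H^0(A,\Omega^1_A)\to T^*_aA$ is an isomorphism for every $a\in A$, so $f^*\omega$ vanishes at $x$ exactly when $\omega(f(x))\in\ker f^*_x$.
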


\begin{proof}
$f$ is smooth if and only if $f$ is a submersion, i.e., the tangent map of $f$ is surjective at any closed point of $X$. This is equivalent to that $f^*\omega$ has no zero for any nonzero holomorphic 1-form $\omega\in H^0(A, \Omega_A^1)$.
\end{proof}

Now we use Theorem \ref{thm:homological-trivial-deformation-no-blow-up-codim0}, Lemma \ref{prop:DHL} and Lemma \ref{lem:smooth-all-forms} to show the following

\begin{theorem}\label{thm:simple-nowhere vanishing 1-form}
    Let $f\colon X\to A$ be a morphism from a smooth projective variety $X$ to a simple abelian variety $A$. The following are equivalent:

    1) $f$ is smooth;
    
    2) there exists a holomorphic 1-form $\omega\in H^0(A, \Omega^1_A)$ such that $f^*\omega$ has no zero.
\end{theorem}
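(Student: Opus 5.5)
Direction 1)$\Rightarrow$2) is immediate from Lemma \ref{lem:smooth-all-forms}: if $f$ is smooth, every nonzero $\omega$ pulls back to a form without zeros. For 2)$\Rightarrow$1), I plan to upgrade the single form $\omega$ to every nonzero form and then apply Lemma \ref{lem:smooth-all-forms} again.

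\textbf{Step 1: $f$ is a $\mathbb{Z}$-homology fibre bundle.} Since $f^*\omega$ has no zero, $f$ is surjective: otherwise $f(X)$ is a proper subvariety of the simple $A$, hence not fibred by tori, and Lemma \ref{prop:DHL} makes a general form vanish somewhere on $f(X)$. Next I want that all $R^if_*\mathbb{Z}$ are local systems. The standard route is that a nowhere vanishing form $f^*\omega$ pulled back from $A$ forces the Albanese-type morphism to be topologically locally trivial over $A$ (or at least to have constant fibre cohomology), via the results behind \cite{Hao24}/\cite{Sch21} on nowhere vanishing one-forms. Concretely, I would use that $f^*\omega$ having no zeros gives, for the real part, a fibration $X\to S^1$ (Tischler), and the vanishing of the characteristic cycles of $Rf_*\mathbb{Z}$ in the direction $\omega$. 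For simple $A$, each characteristic cycle component $T^*_ZA$ with $Z\subsetneq A$ has $p_2(T^*_ZA)=H^0(A,\Omega^1_A)$ by Lemma \ref{prop:DHL}, since $Z$ cannot be fibred by a nontrivial abelian subvariety. So if $\omega\notin p_2(CC)$, no such component can occur, the characteristic cycle of each perverse constituent is supported on the zero section, and $Rf_*\mathbb{Z}$ has locally constant cohomology sheaves. I expect this to be the main obstacle, especially the integral (not just rational) coefficients, which likely need constructibility plus the vanishing of the microsupport along a codirection to rule out torsion jumps.

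\textbf{Step 2: no blow-up.} By Theorem \ref{thm:homological-trivial-deformation-no-blow-up-codim0}, $f$ is without blow-up in codimension 0, so $\pi\colon T^*_fX\to A$ is equidimensional with fibres of dimension $n=\dim X$.

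\textbf{Step 3: the zero locus is all or nothing.} The set of $\eta\in H^0(A,\Omega^1_A)$ with $f^*\eta$ vanishing somewhere equals the image of $T^*_fX\cap(X\times H^0(A,\Omega^1_A))$, where limit relative conormal vectors of the form $f^*\eta(x)$ lie, under the map to $H^0(A,\Omega^1_A)$. I would show that this set is a union of $p_2(T^*_ZA)$ over the images $Z=\pi(\text{component})$ of irreducible components of the degeneracy locus of $T^*_fX$. Equidimensionality prevents a component from lying over a small set and contributing only a proper cone, so each contributing $Z$ is a proper subvariety of $A$. Since $A$ is simple, Lemma \ref{prop:DHL} gives $p_2(T^*_ZA)=H^0(A,\Omega^1_A)$ whenever $Z$ is a proper positive-dimensional subvariety, and the same holds trivially when $Z$ is a point. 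Hence the set of forms with zeros is either $\{0\}$ or everything. By hypothesis, $\omega$ is not in it, so it is $\{0\}$.

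\textbf{Step 4.} Therefore every nonzero $f^*\eta$ is nowhere vanishing, and Lemma \ref{lem:smooth-all-forms} gives that $f$ is smooth.
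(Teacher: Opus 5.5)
\textbf{There is a genuine gap in your Step 3.} Your overall architecture matches the paper's: get the $R^if_*\mathbb{Z}$ as local systems, apply Theorem~\ref{thm:homological-trivial-deformation-no-blow-up-codim0} to get no blow-up in codimension 0, use simpleness of $A$ and Lemma~\ref{prop:DHL} for an all-or-nothing dichotomy, and finish with Lemma~\ref{lem:smooth-all-forms}.

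Your Step 1 replaces the paper's citation chain (\cite[Proposition 7]{Kot22}, then \cite[Corollary 3.4]{SY25} or \cite[Corollary 1.5]{DHL24}) with a microlocal argument. That can be made rigorous, but not via characteristic cycles of perverse constituents, which are unavailable over $\mathbb{Z}$; Tischler is also unnecessary. Use Kashiwara--Schapira microsupport instead. For proper $f$ one has $SS(Rf_*\mathbb{Z}_X)\subseteq S_f:=\{(a,\eta)\mid f^*\eta(x)=0 \text{ for some } x\in f^{-1}(a)\}$. A microsupport of a constructible complex is a closed conic Lagrangian, hence a union of full conormals $T^*_ZA$. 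Any $Z\neq A$ would give $\omega\in p_2(T^*_ZA)\subseteq p_2(S_f)$ by Lemma~\ref{prop:DHL}, a contradiction. So $SS(Rf_*\mathbb{Z}_X)$ lies in the zero section and the $R^if_*\mathbb{Z}$ are locally constant. Integral coefficients cause no extra difficulty, so this is not the main obstacle.

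\textbf{The set you work with in Step 3 is the wrong one.} Once $f$ is surjective, $X^\circ$ is dense, so $(x,f^*\eta(x))\in T^*_fX$ for \emph{every} $x\in X$ and every $\eta$. The preimage of $T^*_fX$ under $(x,\eta)\mapsto(x,f^*\eta(x))$ is therefore everything, and your ``$T^*_fX\cap(X\times H^0(A,\Omega^1_A))$'' does not cut out the forms with zeros. The correct object is $\ker(df^*)=\{(x,\eta)\mid f^*\eta(x)=0\}$ and its image $S_f$. In the paper this is $(q\times\id)(q^*T^*A\cap T^*_X(\mathbb{P}^m\times A))$, and the forms with zeros are exactly $p_2(S_f)$.

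\textbf{The key claim of Step 3 is asserted, not proved.} That $S_f$ is a union of \emph{full} conormal varieties is the whole difficulty. A priori $S_f$ is only isotropic: by Lemma~\ref{lem:conormalspace-nonlinear}, each component $W$ lies in $T^*_{p_1(W)}A$ with $\dim W\le \dim A$, and only components of dimension exactly $\dim A$ are conormal varieties. A lower-dimensional component is a proper subcone of some $T^*_ZA$, and its $p_2$-image can be a proper, even non-linear, cone. This is exactly the phenomenon behind Theorem~\ref{thm:non-linear example}. Likewise, a component over a point $a$ could be $\{a\}\times L$ with $L\subsetneq T^*_aA$, so ``trivially when $Z$ is a point'' presupposes the conclusion.

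What rules such components out is Sabbah's result \cite[Proposition 3.3, 2)]{Sab85}: if $f$ is without blow-up in codimension 0, then $S_f$ is Lagrangian, i.e.\ of pure dimension $\dim A$, hence a finite union of $T^*_{Z_i}A$. Your sentence that equidimensionality ``prevents a component from contributing only a proper cone'' restates this conclusion without a mechanism. Equidimensionality of $T^*_fX\to A$ is a condition on limits of relative conormal vectors, and converting it into pure-dimensionality of $S_f$ is precisely the content of Sabbah's proposition, which you must invoke or prove.

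Note the contrast with Step 1. The microsupport is Lagrangian for free, but $S_f$, which governs the actual zeros of $f^*\eta$, need not be. If it were, the dichotomy would apply to $S_f$ directly and Steps 1--2 would be superfluous. The point of Theorem~\ref{thm:homological-trivial-deformation-no-blow-up-codim0} is to put you where Sabbah's proposition applies.
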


\begin{proof}
1) implies 2) is trivial. We show 2) implies 1). For the projective morphism $f\colon X\to A$, we may choose an embedding $i\colon X\to \mathbb{P}^m\times A$ for some integer $m$ such that $f$ and $i$ fit into the following commutative diagram.
\[\xymatrix{
X\ar[rd]_-f\ar@{^{(}->}[r]^-i&\mathbb{P}^m\times A  \ar[d]^-q \\
&A,
}\]
where $q$ is the natural projection. Then we have the commutative diagram 
\[\xymatrix{
T^*_X(\mathbb{P}^m\times A) \ar@{^{(}->}[r]&T^*(\mathbb{P}^m\times A)\\
f^*T^*A  \ar@{^{(}->}[r]^-{i\times \id}\ar[d]&q^*T^*A\ar[d] \ar@{^{(}->}[u]\ar[r]^-{q\times \id}&T^*A\ar[d]\ar[r]^-{p_2}& H^0(A, \Omega_A^1)  \\
X\ar[r]^-{i}&\mathbb{P}^m\times A\ar[r]^-q &A,
}\]
We claim that $$p_2\circ(q\times \id)(q^*T^*A\cap T_X^*(\mathbb{P}^m\times A))=\{\omega\in H^0(A, \Omega_A^1)\ |\ Z(f^*\omega)\neq \emptyset\},$$
where $Z(f^*\omega)$ is the zero loci of holomorphic 1-form $f^*\omega$. In fact, it is clear that \begin{align*}
   (q\times \id)(q^*T^*A\cap T_X^*(\mathbb{P}^m\times A))=&(f\times \id)(f^*T^*A\cap T_X^*(\mathbb{P}^m\times A)) \\
   =&(f\times \id)((X\times H^0(A, \Omega_A^1))\cap T_X^*(\mathbb{P}^m\times A)).
\end{align*} 
Then by the exact conormal sequence of vector bundles $$0\to T^*_X(\mathbb{P}^m\times A)\to T^*(\mathbb{P}^m\times A)|_X\to T^*X\to 0,$$ we infer that the above claim holds. 

For the rest, we show that for  any nonzero holomorphic 1-form $\omega$ on $A$, $f^*\omega$ has no zero. By assumption 2), one has that the underlying differentiable structure of $X$ is a differentiable fibre bundle over the circle $S^1$ by \cite[Proposition 7]{Kot22}. Then by \cite[Corollary 3.4]{SY25} or \cite[Corollary 1.5]{DHL24}, we have $R^if_*\mathbb{Z}$ are all $\mathbb{Z}$-local systems for all integers $i$. Now we apply Theorem \ref{thm:homological-trivial-deformation-no-blow-up-codim0} and get that $f$ is without blow-up in codimension 0. Therefore, by \cite[Proposition 3.3, 2)]{Sab85}, $(q\times \id)(q^*T^*A\cap T_X^*(\mathbb{P}^m\times A))$ is a Lagrangian subvariety of $T^*A$, i.e. it is a finite union of conormal varieties $T^*_{Z_i}A$ along various subvarieties $Z_i$ of A. Since $A$ is simple, any positive dimensional proper subvariety of $A$ is of general type. Then by Lemma \ref{prop:DHL}, we have $$p_2\circ(q\times \id)(q^*T^*A\cap T_X^*(\mathbb{P}^m\times A))=\{0\}\ \text{or}\ H^0(A,\Omega_A^1),$$ so is $\{\omega\in H^0(A, \Omega_A^1)\ |\ Z(f^*\omega)\neq \emptyset\}$. However, we have a nowhere vanishing holomorphic 1-form $f^*\omega$. Hence $$\{\omega\in H^0(A, \Omega_A^1)\ |\ Z(f^*\omega)\neq \emptyset\}=\{0\},$$ which means for any nonzero holomorphic 1-form $\omega$ on $A$, $f^*\omega$ has no zero. Thus $f$ is smooth by Lemma \ref{lem:smooth-all-forms}.
 \end{proof}

\begin{remark}
    In Theorem \ref{thm:simple-smooth}, one cannot expect that $f$ is an analytic fibre bundle or isotrivial. For example, let $Y$ be a smooth projective variety of general type, which contains an elliptic curve $E$. Denote by $\sigma$ the closed embedding $E\to Y$, and consider the trivial family $p_1\colon E\times Y\to E$. Let $\Gamma_{\sigma}$ be the graph of $\sigma$ in $E\times Y$. Denote the blowup of $E\times Y$ along $\Gamma_{\sigma}$ by $X$. Then the morphism $X\to E$ is smooth, but not isotrivial.  
\end{remark}

\section{Linearity of the space of holomorphic 1-forms with zeros}\label{sect:deg-1-form-linear}
%\section{Criterion for detecting Hodge one-forms}
Let $X$ be a smooth projective variety. For any holomorphic 1-form $\omega\in H^0(X, \Omega_X^1)$, $Z(\omega)$ is denoted to be the set-theoretic zero loci of $\omega$ in $X$. We recall the following algebraic invariant introduced by Budur-Wang-Yoon \cite{BWY16}. 

\begin{definition}
The \textit{$i$-th lower level set} of $X$ is defined to be 
\[
W^i(X):=\{\omega\in H^0(X, \Omega_X^1)\ |\ \codim Z(\omega)\leq i \}.
\] 
\end{definition}

For a smooth projective variety $X$ of dimension $n$, $W^n(X)$ is the whole space of holomorphic 1-forms with zeros on $X$. Inside of $W^i(X)$, one has subsets arising from the tangent cone of the cohomology jump loci, which is very well studied,
\[
V^i(X):=\{\omega\in H^0(X, \Omega_X^1)\ |\ (H^{\bullet}(X,\CC),\wedge\omega)\text{ is not exact at } H^j(X,\CC) \text{ for } j\leq i\}.
\]
Consider the character variety $\Hom(\pi_1(X),\CC^{\ast})$ of $X$ which is a direct product of $(\CC^{\ast})^{b_1(X)}$ with a finite group. Let $\cV^i(X)$ be the subset of $\Hom(\pi_1(X),\CC^{\ast})$ consisting of rank one local systems $\cL$ on $X$ such that $H^{j}(X,\cL)\neq 0$ for $j\leq i$. By the tangent cone theorem \cite[Theorem A]{DPS09}, $V^i(X)$ is the intersection of $H^0(X, \Omega_X^1)$ with the tangent cone of $\cV^i(X)$ at the trivial local system. Therefore, $V^i(X)$ is \textit{linear}, i.e., a union of finitely many vector subspaces in $H^0(X, \Omega_X^1)$ (See \cite{Bea88} and \cite{Sim93}).

One can show that $V^i(X)\subseteq W^i(X)$. In fact, suppose by contradiction that there is a holomorphic 1-form $\omega\in V^i(X)$ and $\omega\notin W^i(X)$. Then $\codim Z(\omega)>i$. By \cite[Proposition~3.4]{GL87}, $(H^{\bullet}(X,\CC),\wedge\omega)$ is exact at any degree $\leq i$, which is a contradiction.  
%Observe that for any codimension $k$ degenerate 1-form $\omega$ on $X$, the sequence 
%\[
 %\rightarrow H^{i-1}(X,\CC) \xrightarrow{\wedge \omega} H^{i}(X,\CC) \xrightarrow{\wedge \omega} H^{i+1}(X,\CC)\xrightarrow{\wedge \omega} \cdots 
 %\]
 %is exact at $H^i(X,\CC)$ for $i<k$, in particular, $\omega\notin V^{k-1}$. If $\codim Z(\omega)=i$ and $\omega\notin V^k$, then above sequence is exact for all $i\leq k$.

Motivated by the linearity of $V^n(X)$, it is natural to ask whether $W^n(X)$ is linear or not \cite[Question 1.10]{DHL24}. There are some known cases. For example,  if $X$ is of general type or has nonzero topological/holomorphic Euler characteristic, then $W^n(X)$ is linear (see \cite{PS14} and \cite{GL87}). %For smooth projective varieties of $n\leq 3$, $W^n(X)$ is linear by \cite{HS21}.
If the Albanese map of $X$ is finite, then $W^n(X)$ is linear by \cite{DHL24}. Moreover, if $X$ is a subvariety of an abelian variety, then $W^n(X)$ is linear by \cite{Ueno75}. Based on works of \cite{Chu24} and \cite{HWZ26}, we obtain the linearity property for any smooth good minimal models.

\begin{theorem}\label{thm:linearity-zeros-gmm}
    If $X$ is a smooth projective good minimal model (i.e., $K_{X}$ is semiample) of dimension $n$, then $$W^{n}(X)=\{\omega\in H^{0}(X,\Omega_{X}^{1})~|~Z(\omega)\neq \emptyset\}$$ is a  vector subspace of $H^{0}(X,\Omega_{X}^{1})$. 
\end{theorem}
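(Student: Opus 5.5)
The plan is to realize $W^n(X)$ as the kernel of a linear map, using the Iitaka fibration. Since $K_X$ is semiample, some multiple $|mK_X|$ defines a morphism $\phi\colon X\to Y$ with connected fibres onto a normal projective variety $Y$ of dimension $\kappa(X)$. This is the Iitaka fibration, and its general fibre $F$ is a smooth projective variety with $K_F$ torsion. Consider the restriction map
\[
r\colon H^0(X,\Omega_X^1)\to H^0(F,\Omega_F^1),\qquad \omega\mapsto \omega|_F .
\]
I aim to prove
\[
W^n(X)=\{\omega\in H^0(X,\Omega_X^1)\mid Z(\omega)\neq\emptyset\}=\ker r .
\]
This set is visibly a vector subspace. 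It does not depend on the choice of general fibre $F$, since the images of $r$ form a local system over the smooth locus of $\phi$.

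For the inclusion $\ker r\subseteq W^n(X)$, I take $\omega$ with $\omega|_F=0$ for general $F$. Then $\omega$ is, over a dense open subset of $Y$, the pullback of a (reflexive, orbifold) 1-form on the base. I will use the canonical bundle formula $K_X\sim_{\mathbb{Q}}\phi^*(K_Y+B_Y+M_Y)$ with $K_Y+B_Y+M_Y$ big, so that the base is of log general type in the generalized-pair sense. Then I invoke the result of \cite{HWZ26}, building on \cite{Chu24}, which extends the Popa--Schnell theorem \cite{PS14} to this setting: a 1-form pulled back from such a base must vanish somewhere on $X$. Concretely, if $\omega$ were nowhere vanishing, then by \cite[Proposition 7]{Kot22} $X$ would fibre smoothly over $S^1$, forcing $\chi$-type invariants of the base to vanish, which contradicts bigness.

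For the converse inclusion, I take $\omega$ with $\omega|_F\neq0$ and show $Z(\omega)=\emptyset$. By the Beauville--Bogomolov decomposition, after an étale cover $F$ is a product $A_F\times F'$ with $A_F$ abelian and $h^0(\Omega^1_{F'})=0$. So $\omega|_F$ comes from a nonzero translation-invariant form on $A_F$ and is nowhere vanishing on $F$. The remaining issue is singular and special fibres of $\phi$. Because $K_X$ is semiample and nef, Kollár/Ambro-type rigidity (the good minimal model structure used in \cite{HWZ26}) makes the abelian part of the fibration isotrivial after a base change, realizing $X$ étale-locally as a quotient of a product in which $\omega$ has a nowhere vanishing component along the abelian directions. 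Hence $\omega$ has no zero anywhere.

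I expect this last step to be the main obstacle: controlling zeros of $\omega$ over the singular fibres of $\phi$. My first attempt will be to quote the structure theorem of \cite{HWZ26} directly, which says that for a smooth good minimal model, $\omega$ is nowhere vanishing iff $\omega|_F\neq 0$. If that statement is not available in exactly this form, I will fall back on deforming $\omega$ within the affine translate $\omega+\ker r$, and use the closedness of $W^n(X)$ together with the characterization $W^n(X)=p_2(\text{conormal data})$ from the proof of Theorem~\ref{thm:simple-nowhere vanishing 1-form}.
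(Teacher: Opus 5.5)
The gap is the inclusion $W^n(X)\subseteq\ker r$, that is, the claim that $\omega|_F\neq 0$ forces $Z(\omega)=\emptyset$. Your proposal never proves it. Knowing that $\omega$ has no zeros on the general Iitaka fibre only places $Z(\omega)$ over the discriminant of $\phi$, and the whole difficulty is the singular fibres.

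\begin{itemize}
\item ``Kollár/Ambro-type rigidity'' does not give an étale-local product structure over those fibres, since the non-abelian part of the fibres may vary and degenerate.
\item The product statement that does exist is the splitting theorem of \cite{Chu24}, \cite{HWZ26}: a good minimal model with a nowhere vanishing 1-form becomes, after a finite étale cover, a product with a positive-dimensional abelian variety. It takes the existence of a nowhere vanishing form as its \emph{hypothesis}, so using it to get one from $\omega|_F\neq 0$ is circular.
\item Even granting a splitting $\tilde X\cong\tilde Y\times B$, your equality would still require that no 1-form on $\tilde Y$ is nonzero on the Iitaka fibre of $\tilde Y$. Nothing you cite provides this.
\item Your fallback also fails. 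Closedness of $W^n(X)$ only shows that limits of forms with zeros have zeros, never that a given form has none. The conormal description in the proof of Theorem~\ref{thm:simple-smooth} yields linearity only because $f$ is without blow-up in codimension 0, which comes from the $\mathbb{Z}$-homology fibre bundle property and again presupposes a nowhere vanishing form, and because $A$ is simple.
\end{itemize}

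The other inclusion $\ker r\subseteq W^n(X)$ is true, but not for your reason: fibering over $S^1$ contradicts no bigness statement. Instead, let $T\subseteq\Alb(X)$ be the abelian subvariety whose translates are the images of Iitaka fibres. A form vanishing on the Iitaka fibres is $g^*\alpha$ for $g\colon X\to\Alb(X)/T$. Since $g$ contracts the Iitaka fibres and $K_X$ is the pullback of an ample class on the Iitaka base, $mK_X-g^*L$ is effective for $m\gg0$. The general form of the Popa--Schnell theorem \cite{PS14} then gives zeros.

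The way around this, as in the paper, is not to identify $W^n(X)$ with any explicit subspace.
\begin{enumerate}
\item If every 1-form on $X$ has a zero, then $W^n(X)=H^0(X,\Omega^1_X)$.
\item Otherwise, apply the splitting theorem and iterate it. The factor $Y$ in $X\times_A A'\cong Y\times A'$ again has $K_Y$ semiample and smaller dimension. This gives a finite étale $\pi\colon\tilde X\to X$ with $\tilde X\cong\tilde Y\times B$, where $B$ is abelian and $\tilde Y$ carries no nowhere vanishing 1-form.
\item By Künneth, every form on $\tilde X$ is $p_1^*\alpha+p_2^*\beta$. It is nowhere vanishing if $\beta\neq 0$ and has zeros if $\beta=0$. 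Hence $W^n(\tilde X)=p_1^*H^0(\tilde Y,\Omega^1_{\tilde Y})$.
\item Since $\pi$ is étale, $Z(\pi^*\omega)=\pi^{-1}Z(\omega)$. Therefore $W^n(X)=(\pi^*)^{-1}W^n(\tilde X)$ is a vector subspace.
\end{enumerate}
This argument uses only what the splitting theorem actually provides.
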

\begin{proof}
    If every holomorphic 1-form on $X$ has zeros, then $W^{n}(X)=H^{0}(X,\Omega_{X}^{1})$. Now we may assume that $X$ admits a nowhere vanishing holomorphic 1-form. Then by \cite{Chu24} or \cite{HWZ26}, there exists an isotrivial smooth fibration $f\colon X\to A$ where $A$ is a positive dimensional abelian variety. Moreover, there exists a finite 
    \'etale cover $A'\to A$ such that the base change of $X$ along $A'\to A$ splits, that is, $X':=X\times_{A}A'\cong Y\times A'$, where $Y$ is a smooth projective variety. Since $K_{X'}$ is semiample, it follows that $K_{Y}$ is semiample. 
    Iterating the above process, we can assume that there exists a finite \'etale cover $\pi:\tilde{X}\to X$ such that $\tilde{X}\cong \tilde{Y}\times B$, where $\tilde{Y}$ is a smooth projective variety without nowhere vanishing holomorphic 1-forms, and $B$ is a positive dimensional abelian variety. Denote by $p_{1}:\tilde{X}\to \tilde{Y}$ the projection to the first factor. Since any nonzero holomorphic 1-form on $B$ has no zeros, we have $W^{n}(\tilde{X})=p_{1}^{*}H^{0}(\tilde{Y},\Omega_{\tilde{Y}}^{1})$, which is a linear subspace of $H^{0}(\tilde{X},\Omega_{\tilde{X}}^{1})$. To show that $W^{n}(X)\subset H^{0}(X,\Omega_{X}^{1})$ is a linear subspace, it suffices to verify that $W^{n}(X)$ is closed under addition. Since $\pi$ is \'etale, for any two holomorphic 1-forms $\omega_{1},\omega_{2}\in W^{n}(X)$, $\pi^{*}(\omega_{1}),\pi^{*}(\omega_{2})\in W^{n}(\tilde{X})$. As $W^{n}(\tilde{X})$ is a linear subspace, we have $\pi^{*}(\omega_{1})+\pi^{*}(\omega_{2})=\pi^{*}(\omega_{1}+\omega_{2})\in W^{n}(\tilde{X})$. Using again that $\pi$ is \'etale we conclude $\omega_{1}+\omega_{2}\in W^{n}(X)$.   
\end{proof}

For varieties $X$ with simple Albanese varieties, we obtain the linearity property of $X$ by Theorem \ref{thm:simple-smooth}.

\begin{theorem}\label{prop:simple-linear}
  Let $f\colon X\to A$ be a morphism from an $n$-dimensional smooth projective variety $X$ to a simple abelian variety $A$,  then $W^n(X)\cap f^*H^0(A,\Omega_A^1)=f^*H^0(A,\Omega_A^1)$ or $\{0\}$. In particular, if the Albanese variety of $X$ is simple, then $W^n(X)=H^0(X, \Omega_X^1)$ or $\{0\}$. 
\end{theorem}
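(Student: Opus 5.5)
The plan is a dichotomy argument resting on Theorem \ref{thm:simple-nowhere vanishing 1-form} and Lemma \ref{lem:smooth-all-forms}. First note what $W^n(X)\cap f^*H^0(A,\Omega_A^1)$ is: since $\dim X=n$, we have $\codim Z(f^*\omega)\le n$ exactly when $Z(f^*\omega)\neq\emptyset$. We use the convention that $Z(0)=X$, so $0\in W^n(X)$. Hence
\[
W^n(X)\cap f^*H^0(A,\Omega_A^1)=\{f^*\omega \mid \omega\in H^0(A,\Omega_A^1),\ Z(f^*\omega)\neq\emptyset\}.
\]

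\emph{Case (a): $f$ is smooth.} By Lemma \ref{lem:smooth-all-forms}, $f^*\omega$ has no zero for every nonzero $\omega$. So the only pulled-back form with zeros is $0$, and the intersection is $\{0\}$.

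\emph{Case (b): $f$ is not smooth.} By Theorem \ref{thm:simple-nowhere vanishing 1-form}, no $\omega\in H^0(A,\Omega_A^1)$ makes $f^*\omega$ nowhere vanishing. Thus every $f^*\omega$, including $f^*0=0$, has zeros, and the intersection is all of $f^*H^0(A,\Omega_A^1)$. Cases (a) and (b) together give the first assertion.

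For the ``in particular'' part, take $f=\alb_X\colon X\to \Alb(X)$, with $\Alb(X)$ simple. Albanese pullback $\alb_X^*\colon H^0(\Alb(X),\Omega^1)\to H^0(X,\Omega_X^1)$ is an isomorphism, so $f^*H^0(A,\Omega_A^1)=H^0(X,\Omega_X^1)$. The first part then gives $W^n(X)=H^0(X,\Omega_X^1)$ or $\{0\}$.

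One degenerate case needs a separate check. If $\Alb(X)=0$, then $H^0(X,\Omega_X^1)=0$ and both alternatives coincide, so the statement holds trivially. We may also need to check that a simple abelian variety here is meant to be positive dimensional, but this case is harmless either way.

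There is no real obstacle: all the depth sits in Theorem \ref{thm:simple-nowhere vanishing 1-form}. The only points needing care are the bookkeeping conventions: the zero form counts as lying in $W^n(X)$, and ``has zeros'' is equivalent to $\codim Z\le n$.
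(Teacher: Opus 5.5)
Your proof is correct and follows the paper's approach. The paper's proof is the single sentence ``This trivially follows from Theorem \ref{thm:simple-smooth}''. Your case split unpacks it exactly: when $f$ is smooth you apply Lemma \ref{lem:smooth-all-forms}, and when $f$ is not smooth you apply the contrapositive of Theorem \ref{thm:simple-smooth}. The ``in particular'' part then follows via the Albanese isomorphism on $1$-forms, as you say.
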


\begin{proof}
This trivially follows from Theorem \ref{thm:simple-smooth}.
\end{proof}

In general, for an $n$-dimensional smooth projective variety $X$, $W^n(X)$ is not always a vector subspace.
\begin{example}
    Let $Y=E_{1}\times E_{2}\times Z$, where $E_{1}$, $E_{2}$ are two non-isogenous elliptic curves, $Z$ is a simply connected smooth projective variety of dimension $n-2$. Let $p,q\in Z$ be two distinct closed points, and let $C_{1}=E_{1}\times\{e_{2}\}\times\{p\}$, $C_{2}=\{e_{1}\}\times E_{2}\times\{q\}$. Let $\rho:X\to Y$ be the blow-up of $Y$ along $C_{1}\cup C_{2}$. Then we have
    $$W^{n}(X)=p_{1}^{*}H^{0}(X,\Omega_{E_{1}}^{1})\cup p_{2}^{*}H^{0}(X,\Omega_{E_{2}}^{1}),$$
    where $p_{i}\colon X\to E_{i}$ $(i=1,2)$ are the natural projections. In particular, $W^{n}(X)$ is a linear subset of $H^{0}(X,\Omega_{X}^{1})$. 
\end{example}

\begin{remark}
In general, the structure of $W^n(X)$ is very sensitive to the birational modifications involved in the minimal model program from a smooth projective variety to its minimal models. 
\end{remark}

For the remaining part of this section, we discuss the linearity of $W^n(X)$ for $n=\dim X\leq 3$.

\begin{lemma}\label{lem:linearity-blowup}
    Let $X$ be an n-dimensional smooth projective variety and $C\subset X$ be a smooth projective subvariety of dimension $d$. Let $Y=\Bl_{C}X$ be the blow-up of $X$ along $C$. If $W^n(X)$ and $W^d(C)$ are both linear, then so is $W^n(Y)$.
\end{lemma}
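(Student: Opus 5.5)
The plan is to identify $W^n(Y)$ explicitly in terms of $W^n(X)$ and $W^d(C)$, and then observe that the operations involved preserve linearity.

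Let $\rho\colon Y=\Bl_C X\to X$ be the blow-up, with exceptional divisor $E=\PP(\cN_{C/X})$. Since $\rho$ is birational between smooth projective varieties, $\rho^*\colon H^0(X,\Omega^1_X)\to H^0(Y,\Omega^1_Y)$ is an isomorphism. So it suffices to describe $W^n(Y)$ as a subset of $H^0(X,\Omega_X^1)$.

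First dispose of the trivial case $d=n-1$. Then $C$ is a Cartier divisor, so $\rho$ is an isomorphism and $W^n(Y)=W^n(X)$. From now on assume $\codim C\ge 2$.

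Let $r\colon H^0(X,\Omega^1_X)\to H^0(C,\Omega^1_C)$ be the restriction map. The key claim is
\[
W^n(Y)=W^n(X)\cup r^{-1}\bigl(W^d(C)\bigr),
\]
where $W^d(C)$ is the set of $1$-forms on $C$ with a zero. For $d=0$ this is all of $H^0(C,\Omega^1_C)=0$.

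To prove the claim, compute $Z(\rho^*\omega)$ in two pieces.

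Off $E$, $\rho$ is an isomorphism, so $Z(\rho^*\omega)\setminus E\cong Z(\omega)\setminus C$.

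On $E$, take a point $y$ lying over $c\in C$ and corresponding to a normal line $\ell\subset \cN_{C/X,c}$. The image of $d\rho_y$ is the preimage of $\ell$ in $T_cX$, which contains $T_cC$. Hence $\rho^*\omega$ vanishes at $y$ iff $\omega(c)$ kills $T_cC$ and kills a lift of $\ell$. The first condition says $c\in Z(\omega|_C)$. Given that, $\omega(c)$ induces a linear functional on $\cN_{C/X,c}$, which has dimension $n-d\ge 2$. Its kernel therefore always contains a line $\ell$. Consequently
\[
Z(\rho^*\omega)\cap E\neq\emptyset\iff Z(\omega|_C)\neq\emptyset .
\]
This is where $\codim C\ge 2$ is used. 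It is the only genuinely geometric step, and the main point to check carefully, namely the description of $d\rho$ along $E$ via the local model of the blow-up.

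Now combine the two pieces. If $Z(\omega)$ meets $C$, then $Z(\omega|_C)\ne\emptyset$. Hence $Z(\rho^*\omega)\ne\emptyset$ iff either $Z(\omega)\ne\emptyset$ or $Z(\omega|_C)\ne\emptyset$, which is the claimed formula. For $d=0$ the formula reads $r^{-1}(0)=H^0(X,\Omega^1_X)$, so every $\rho^*\omega$ has a zero on $E\cong\PP^{n-1}$, consistent with the computation above.

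Finally, conclude linearity. By hypothesis $W^d(C)=\bigcup_j U_j$ is a finite union of vector subspaces. The preimage $r^{-1}(W^d(C))=\bigcup_j r^{-1}(U_j)$ under the linear map $r$ is again a finite union of vector subspaces. Adding the finitely many subspaces making up $W^n(X)$, we see that $W^n(Y)$ is linear.
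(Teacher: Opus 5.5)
Your proof is correct and is essentially the paper's argument: both identify $W^n(Y)$, via the isomorphism $\rho^*$, with $W^n(X)\cup r^{-1}(W^d(C))$ and then read off linearity. Beyond the paper, you supply the tangent-map computation along the exceptional divisor, which the paper leaves to the reader. You also rightly treat $d=n-1$ separately: there $\Bl_C X\cong X$, and the paper's observation that $\pi^*\omega$ vanishes somewhere on $\pi^{-1}(C)$ iff $Z(\omega|_C)\neq\emptyset$ actually fails, although the lemma is trivial in that case.
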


\begin{proof}
  For the blow-up morphism $\pi\colon Y\to X$, we have the following two simple observations. We leave the proof to the reader. 
  \begin{itemize}
        \item[(a)] the set $\{\eta\in H^0(X, \Omega_X^1)|Z(\eta|_{C})\neq \emptyset\}$ is a linear subspace of $H^0(X, \Omega_X^1)$;
        \item[(b)] for $\pi^{\ast}\omega\in H^0(Y, \Omega_Y^1)$, $\pi^{\ast}\omega$ has zeros along $\pi^{-1}(C)$ if and only if  $Z(\omega|_C)\neq\emptyset$.
    \end{itemize} 
Then by these observations, we have  
\begin{align*}
W^n(Y) =&\{\pi^{\ast}\omega \in \pi^{\ast} H^0(X, \Omega_X^1)\ |\ Z(\pi^{\ast}\omega)\cap (Y-\pi^{-1}(C))\neq\emptyset\}\\
     &\cup \{\pi^{\ast}\omega \in \pi^{\ast} H^0(X, \Omega_X^1)\ |\ Z(\pi^{\ast}\omega)\cap \pi^{-1}(C)\neq\emptyset\} \\
     =& \pi^{\ast} \{\omega\ |\ Z(\omega)\cap (X-C)\neq\emptyset\}\cup  \pi^{\ast} \{\omega\ |\ Z(\omega|_C)\neq\emptyset\}  \\
     =& \pi^{\ast} W^n(X)\cup \pi^{\ast}\{\omega\ |\ Z(\omega|_C)\neq\emptyset\}.
\end{align*}
Hence $W^n(Y)$ is linear. 
\end{proof}

\begin{theorem}\label{prop:23linearity}
 Let $X$ be a smooth projective variety with $\dim X\leq 3$, then $W^{\dim X}(X)$ is linear.   
\end{theorem}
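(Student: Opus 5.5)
The plan is to argue by dimension, using the classification of varieties carrying nowhere vanishing holomorphic 1-forms (\cite{Sch21} for surfaces, \cite{HS21} for threefolds) together with Lemma \ref{lem:linearity-blowup} to pass through blow-ups.

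\textbf{Curves.} If $\dim X=1$, then $W^1(X)$ is the set of 1-forms with zeros. For genus $g\ge 2$ every form has zeros, so $W^1(X)=H^0(X,\Omega^1_X)$. For $g=1$ no nonzero form has zeros, so $W^1(X)=\{0\}$. For $g=0$ the space of forms is zero. In all cases $W^1(X)$ is linear.

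\textbf{Surfaces.} Let $\dim X=2$. If no nonzero 1-form is nowhere vanishing, then $W^2(X)=H^0(X,\Omega_X^1)$. Otherwise, by \cite{Sch21}, $X$ is obtained from a minimal model $X_{\min}$ by a sequence of point blow-ups. Moreover, some nonzero form on $X_{\min}$ is nowhere vanishing, and $X_{\min}$ is either an isotrivial smooth fibration over an elliptic curve or has $\kappa\le 1$ with a specific structure (abelian, bielliptic, elliptic bundle, and so on). In each case $K_{X_{\min}}$ is semiample, since minimal surfaces of nonnegative Kodaira dimension are good minimal models. Hence $W^2(X_{\min})$ is a linear subspace by Theorem \ref{thm:linearity-zeros-gmm}.

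The ruled case is handled separately. If $X_{\min}$ is a ruled surface over an elliptic curve $E$, then the forms are pulled back from $E$. There $W^2$ is either $\{0\}$ or everything, hence linear; this is also covered by Theorem \ref{prop:simple-linear}. For a ruled surface over a curve of genus $\ge 2$, every form has zeros.

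We then run up the tower of point blow-ups with Lemma \ref{lem:linearity-blowup}, taking $d=0$. Here $W^0(\mathrm{pt})$ is trivially linear: the whole (zero) space of forms on a point.

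\textbf{Threefolds.} Again we may assume $X$ carries a nowhere vanishing 1-form. The structure theorem in \cite{HS21} says that a minimal model of $X$ is reached through a sequence of explicit birational modifications compatible with a smooth morphism to an abelian variety, in which the relevant steps are blow-ups along smooth centers: points, or smooth curves. Meanwhile the minimal model (or the relevant base) has linear $W^3$. The latter comes from Theorem \ref{thm:linearity-zeros-gmm}, since abundance holds in dimension three, or from Theorem \ref{prop:simple-linear} / the elliptic-curve case when the Albanese image is a curve or simple.

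Given this description, Lemma \ref{lem:linearity-blowup} applies inductively. For a point center, use $d=0$. For a smooth curve center $C$, use $d=1$, where $W^1(C)$ is linear by the curve case. The negative Kodaira dimension case (uniruled threefolds) needs a separate treatment via the MRC fibration. After Stein factorization, the nowhere vanishing forms come from a base of dimension $\le 2$, so the question reduces to the linearity of $W$ for curves and surfaces already established, combined with the blow-up lemma.

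\textbf{Main obstacle.} The main obstacle is precisely extracting from \cite{HS21} that every threefold with a nowhere vanishing form is connected to a variety with linear $W^3$ only through smooth blow-ups to which Lemma \ref{lem:linearity-blowup} applies. The MMP a priori involves flips and divisorial contractions to singular points, and their interaction with zeros of forms is not controlled by the lemma. I would first try to use that, in the presence of a nowhere vanishing form, \cite{HS21} shows the MMP steps are relative over an elliptic curve (or abelian variety), with all exceptional loci contained in fibres. In that situation one can compare zeros of $\pi^*\omega$ directly, by a fibrewise argument analogous to observation (b) in Lemma \ref{lem:linearity-blowup}, rather than requiring smooth centers.
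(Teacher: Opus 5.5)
Your strategy is the paper's. Its proof is just the combination of Theorem~\ref{thm:linearity-zeros-gmm}, Lemma~\ref{lem:linearity-blowup}, \cite[Corollary 3.1]{Sch21} and \cite[Theorem 1.3]{HS21}. Either every form has zeros, or the classification presents $X$ as an iterated smooth blow-up of a model with linear $W^{\dim X}$, and one climbs back up with the blow-up lemma. Your curve and surface cases are correct. For surfaces the tower is even superfluous: a point blow-up forces every pulled-back form to vanish somewhere on the exceptional curve, so a surface with a nowhere vanishing form is already minimal, and abundance plus Theorem~\ref{thm:linearity-zeros-gmm}, or the ruled case, finishes.

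The threefold case, however, is not established. The statement you call the main obstacle is the whole content of that case, and the paper does not prove it but imports it from \cite[Theorem 1.3]{HS21}. The substitutes you sketch do not close the gap.

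First, abundance in dimension three only yields a semiample canonical divisor on a minimal model with terminal singularities. That model may be reached through flips and divisorial contractions to points. Theorem~\ref{thm:linearity-zeros-gmm} is for \emph{smooth} good minimal models, and Lemma~\ref{lem:linearity-blowup} says nothing about flips or contractions to singular points. What is needed is that, in the presence of a nowhere vanishing form, the end model is smooth and every step is the inverse of a blow-up along a smooth curve. This is exactly the input taken from \cite{HS21}. Note also that point centres cannot occur in this situation. By observation (b) in Lemma~\ref{lem:linearity-blowup}, any curve centre must be an elliptic curve on which some form restricts without zeros.

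Second, the $\kappa=-\infty$ reduction via the MRC fibration fails as stated. Suppose it is even a morphism $p\colon X\to B$ onto a smooth base with $H^0(X,\Omega^1_X)=p^*H^0(B,\Omega^1_B)$. Then
\[
Z(p^*\eta)=\{x\in X\mid \eta(p(x)) \text{ vanishes on } dp_x(T_xX)\},
\]
and this set can contain critical points of $p$ lying over points where $\eta$ does not vanish. So $W^3(X)\neq p^*W^{\dim B}(B)$ in general, and the critical-locus contribution is not controlled by the lower-dimensional cases. You need the fibration to be smooth, or its degenerations explicitly described, which again is what the classification provides.

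Likewise, your ``fibrewise'' comparison through flips and contractions is only a hope, not an argument.
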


\begin{proof}
    This follows directly from Theorem \ref{thm:linearity-zeros-gmm}, Lemma \ref{lem:linearity-blowup}, \cite[Corollary 3.1]{Sch21}, and \cite[Theorem 1.3]{HS21}.
\end{proof}

\section{Non-linearity of lower level sets and Non-formal 1-forms}
In this section, we give an example for a smooth projective subvariety of an abelian variety with a nonlinear lower level set. Moreover, we study the related holomorphic 1-forms that have zeros and do not come from cohomology jump loci, and smooth projective surfaces with this kind of holomorphic 1-forms.

\subsection{Nonlinear Examples}

\begin{theorem}\label{thm:non-linear example}
There exists a $4$-dimensional smooth projective subvariety $X$ of an abelian 6-fold, such that the third lower level set $W^3(X)$ is not linear.
\end{theorem}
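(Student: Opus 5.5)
We follow the construction sketched in the introduction. Let $A$ be a simple abelian $5$-fold and $E$ an elliptic curve. Take a very ample $L$ on $A$, a general smooth $Y\in|L|$ (a $4$-fold, of general type, with $H^0(Y,\Omega^1_Y)=H^0(A,\Omega^1_A)$ by Lefschetz), and a general smooth curve $C\subset Y$ (for instance a complete intersection of further general members of a very ample system restricted to $Y$). On the $5$-fold $Y\times E\subset A\times E$ we choose $X\in|M|$, with $M$ a sufficiently positive very ample bundle on $A\times E$, such that $X$ contains the surface $S=C\times E$. For $M$ positive enough, the sections of $M|_{Y\times E}$ vanishing on $S$ generate $\mathcal I_S\otimes M$. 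A Bertini argument then makes $X$ smooth away from $S$. Along $S$ we need the usual dimension count: the singular points of $X$ on $S$ are where the induced section of the conormal bundle $N^*_{S/Y\times E}\otimes M$ (rank $3$) vanishes, and on the surface $S$ a general section of a globally generated rank $3$ bundle has no zeros. So $X$ is a smooth $4$-fold in the abelian $6$-fold $A\times E$. By Lefschetz, $H^0(X,\Omega^1_X)=H^0(A,\Omega^1_A)\oplus H^0(E,\Omega^1_E)$, and we write a $1$-form as $\omega=\alpha+t\,dz$ with $\alpha\in H^0(A,\Omega_A^1)$ and $t\in\mathbb C$.

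Next we compute $W^3(X)$, i.e.\ the forms whose zero locus has a positive-dimensional component. By Lemma \ref{prop:DHL}, since $A$ is simple, every positive-dimensional proper subvariety of $A$ is not fibred by tori. Since $X$ is of general type and is a generic-position subvariety of $A\times E$, a general $\omega$ has only isolated zeros on $X$. Positive-dimensional zero loci come from two sources:
\begin{enumerate}
\item[(i)] forms $\omega=\alpha$ with $t=0$ for which $Z(\alpha|_X)$ contains a curve; we expect this to be a proper, possibly non-linear, family;
\item[(ii)] forms for which the zeros along the special surface $S=C\times E$ become a curve.
\end{enumerate}
For (ii), at a point $(c,e)\in S$ the form $\omega|_X$ vanishes iff $\omega$ lies in the conormal space $N^*_{X/A\times E,(c,e)}$. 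This space is spanned by $d\sigma$, the differential of the defining section of $X$, which varies with $e$ because $X$ is not a product near $S$. The condition is therefore a rank condition of the shape
\[
\alpha+t\,dz\in \operatorname{span}\langle dg(c),\ d\sigma(c,e)\rangle ,
\]
where $g$ defines $Y$. One then arranges the choice of $X$ (the $E$-direction dependence of $d\sigma$ along $S$) so that the locus of $(\alpha,t)$ for which this holds along a whole curve in $S$ is an irreducible cone that is not a linear subspace. A natural candidate is a quadric-type cone in which $t$ enters nonlinearly, coming from the Gauss map of $C$ combined with the $E$-dependence of $d\sigma$.

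Finally we verify non-linearity. We show that $W^3(X)$ contains this nonlinear irreducible cone as a component: it is not contained in $\{t=0\}$ and not in any finite union of subspaces contained in $W^3(X)$. We also show that the remaining part of $W^3(X)$, coming from (i) and from zeros off $S$, is of strictly smaller dimension or contained in $\{t=0\}$, so it cannot fill the cone's span. The main obstacle is this last control. We must prove that for a generic choice of $X$ through $S$, zero curves occur only for the predicted forms, and that the span of the cone is not already contained in $W^3(X)$. The approach is an incidence-variety dimension count in $X\times H^0(X,\Omega^1_X)$, over the relative conormal varieties as in Section \ref{sect:smooth-mor-simpleAV-1-form}, exploiting the generality of $Y$, $C$ and $X$. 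This is where the delicate positioning of $X$ is needed.
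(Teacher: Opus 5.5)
\textbf{Verdict: there is a genuine gap.} Your construction of $X$ is the paper's, and your Bertini argument along $S$ is a fine substitute for the strong Bertini theorem: it uses a general section of the rank-$3$ bundle $N^*_{S/Y\times E}\otimes M$ on the surface $S$. The problem is that the mechanism you propose for non-linearity cannot occur.

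At $(c,e)\in S=C\times E$ the curve $\{c\}\times E$ lies in $X$. So $\partial_z\in T_{(c,e)}X$, and $\omega|_X(c,e)=0$ already forces $t=\omega(\partial_z)=0$. In your own formula, neither $dg(c)$ nor $d\sigma(c,e)$ has a $dz$-component, since $\sigma$ vanishes on $\{c\}\times E$. The same holds away from $S$: if $t\neq 0$, then $\alpha+t\,dz$ is nowhere zero on $Y\times E$. Along a curve $B\subseteq Z(\omega|_X)$ it would then be a nowhere vanishing section of $N^*_{X/Y\times E}|_B=\mathcal{O}(-X)|_B$, which is anti-ample; this is the paper's Step 3. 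Hence $W^3(X)$ lies in the hyperplane $t=0$, i.e.\ $W^3(X)\subseteq H^0(A,\Omega^1_A)$, and there is no cone ``in which $t$ enters nonlinearly''.

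The actual non-linear piece is the $t=0$ family $\mathcal{S}_1=\bigcup_{c\in C}N^*_{Y/A,c}$ of forms $\alpha$ with $\alpha|_Y(c)=0$ for some $c\in C$. Since $\alpha|_X=\pi^*(\alpha|_Y)$, such a form vanishes on the whole curve $E\times\{c\}\subset X$. Now $\mathcal{S}_1$ is the cone over the Gauss image $\mathcal{G}(C)\subset\mathbb{P}^4$ of $Y$ along $C$. The paper chooses $C$ so that $\mathcal{G}(C)$ is nondegenerate, using Ran's finiteness of the Gauss map and the result of Wang--Zhang. Then $\mathcal{S}_1\subseteq W^3(X)$ is an irreducible non-linear cone spanning $H^0(A,\Omega^1_A)$. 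Your proposal never identifies or verifies this condition on $C$.

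With this in place, $W^3(X)$ is linear if and only if $W^3(X)=H^0(A,\Omega^1_A)$. So the whole content of the theorem is showing that a general $\alpha$ (with $t=0$) has only isolated zeros on $X$. That is exactly what you leave as ``we expect this to be a proper family'' and call the ``main obstacle''. Your remark that a general $\omega$ has isolated zeros because $X$ is in generic position does not help. $X$ is special, since it contains $C\times E$. Moreover, a statement about general $\omega$, which has $t\neq 0$, says nothing about general $\alpha$ in the hyperplane $t=0$.

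The paper needs three further ingredients:
\begin{enumerate}
\item[(a)] Choose $X$ so that $\pi\colon X\to Y$ is finite off $E\times C$, i.e.\ $X$ contains no other fibre $E\times\{y\}$.
\item[(b)] Use Schnell's bound $\dim S_f\le\dim A$ for $f\colon X\to A$. This excludes, for general $\alpha$, zero curves off $E\times C$ and zero curves in $E\times C$ dominating $C$.
\item[(c)] The remaining candidates are the curves $E\times\{y\}$, which $f$ contracts and which $S_f$ therefore cannot see. For these one needs a tangent-map computation: $\pi_*(T_{(e,y)}X)$ is a $3$-plane containing $T_yC$, and the forms killing all these planes along some $E\times\{y\}$ form a set of dimension at most $3<5$.
\end{enumerate}
Without the correct identification of $\mathcal{S}_1$ and arguments (a)--(c), the proposal does not prove the theorem.
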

 
To deal with the above theorem, we first introduce the following lemma, which is known to experts. The proof presented here is due to Christian Schnell.

\begin{lemma}\label{lem:conormalspace-nonlinear}
  Let $X$ be a smooth projective variety, and let $f\colon X\to A$ be a morphism to an abelian variety $A$. Denote $$Z_{f}=\{(x,\omega)\in X\times H^{0}(A,\Omega^{1}_{A}) ~|~~ f^{*}\omega(x)=0 \}$$ and let $$S_{f}\coloneqq(f\times \mathrm{id})(Z_{f})=\{(a,\omega)\in A\times H^{0}(A,\Omega^{1}_{A}) ~|~~  f^{*}\omega(x)=0,\ \text{for some }x\in f^{-1}(a) \},$$ then we have 
$\mathrm{dim} S_{f}\leq\mathrm{dim} A$, and every irreducible component of $S_{f}$ of dimension equal to $\mathrm{dim}A$ is a conormal variety.   
\end{lemma}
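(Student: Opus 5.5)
The plan is to bound the dimension of $Z_f$ fibrewise over a stratification of $X$ by the rank of $df$, and then to get the Lagrangian property of the top-dimensional components from the isotropy of the Liouville form. Identify $T^*A\cong A\times H^0(A,\Omega^1_A)$, so that $S_f$ sits inside $T^*A$ and $\dim T^*A = 2\dim A$. Set $g=\dim A$ and $n=\dim X$.

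\textbf{Dimension bound.} Stratify $X$ into finitely many locally closed smooth strata $X_r$ on which the rank of $df_x$ is constant, equal to $r$. We may refine the stratification so that $f|_{X_r}$ has constant rank $\rho_r$ as a map from the stratum; then $\rho_r\le r$, since $T_xX_r\subset T_xX$. Over $x\in X_r$, the fibre of $Z_f$ is the space of forms $\omega$ with $f^*\omega(x)=0$, i.e.\ $\omega_{f(x)}$ vanishes on the image of $df_x$. This is a linear space of dimension $g-r$. The image $(f\times\mathrm{id})(Z_f|_{X_r})$ lies over the constructible set $f(X_r)$, which has dimension $\rho_r$, and over each point it has fibre dimension at most $g-r$. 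Hence the image has dimension at most $\rho_r+g-r\le g$. Taking the union over strata gives $\dim S_f\le g$. Properness of $f$ gives closedness of $S_f$.

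\textbf{Conormality.} Let $W$ be an irreducible component of $S_f$ with $\dim W=g$. It suffices to show that $W$ is isotropic for the canonical symplectic form on $T^*A$: an irreducible closed conic isotropic subvariety of dimension $g$ in $T^*A$ is Lagrangian, and such a subvariety is the conormal variety $T^*_{\pi(W)}A$ of its image in $A$. Conicity is clear, since $Z_f$ is stable under scaling $\omega$.

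For isotropy, choose an irreducible component $W'$ of $Z_f$ dominating $W$, and restrict to a smooth dense open subset of $W'$ on which the map $\phi=f\times\mathrm{id}\colon W'\to W$ is smooth onto a smooth open subset of $W$. Let $\theta$ be the Liouville $1$-form on $T^*A$, given at $(a,\omega)$ by $\theta(v)=\omega(d\pi(v))$. A tangent vector to $W'$ at $(x,\omega)$ is a pair $(\xi,\dot\omega)$, and $\phi^*\theta(\xi,\dot\omega)=\omega(df_x\xi)=(f^*\omega)(x)(\xi)$. This vanishes identically, because $f^*\omega(x)=0$ on $Z_f$. Therefore $\phi^*\theta=0$, hence $\phi^*d\theta=0$. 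Since $\phi$ is submersive onto $W$ on this open set, $d\theta|_W=0$ there, so $W$ is isotropic on a dense open subset and hence Lagrangian.

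\textbf{Main obstacle.} The delicate point is the dimension estimate, specifically ensuring $\rho_r\le r$ with the correct stratification. The bound requires taking the dimension of the image of a stratum rather than of the stratum itself. I would handle this with generic smoothness applied to $f|_{X_r}$, refining the strata until the map from each stratum has constant rank. The isotropy argument is then routine. The final step, that a conic Lagrangian subvariety is a conormal variety, is standard: on the smooth locus of $\pi(W)$, conicity together with $\theta|_W=0$ forces the fibres to annihilate the tangent spaces, and the dimension count forces equality.
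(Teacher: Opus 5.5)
\textbf{The dimension bound has a genuine gap.} The fibre of $(f\times\mathrm{id})(Z_f|_{X_r})$ over $a\in f(X_r)$ is $\bigcup_{x\in f^{-1}(a)\cap X_r}\mathrm{Ann}(\mathrm{im}\,df_x)$. This is a union of $(g-r)$-dimensional subspaces that can move with $x$ along the positive-dimensional set $f^{-1}(a)\cap X_r$, so it need not have dimension $\le g-r$.

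Concretely, take the blow-down $\mathrm{Bl}_pA\to A$ of an abelian surface. The stratum $X_1$ is the exceptional curve, with $r=1$ and $\rho_1=0$. As $x$ moves along it, $\mathrm{im}\,df_x$ runs through all lines of $T_pA$, so the fibre over $p$ is all of $T^*_pA$, of dimension $2>g-r$. Without further input, your count only gives $\rho_r+(\dim X_r-\rho_r)+(g-r)=\dim X_r+g-r$, which exceeds $g$ whenever $\dim X_r>r$.

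The inequality $\rho_r\le r$ that you flag as the main obstacle is trivial. The missing idea is that all the subspaces $\mathrm{im}\,df_x$ with $x$ over $a$ contain a tangent space of the image. Indeed $\mathrm{im}\,df_x\supseteq d(f|_{X_r})_x(T_xX_r)$. By the constant rank theorem these $\rho_r$-dimensional spaces are locally constant along $f^{-1}(a)\cap X_r$, hence take finitely many values; alternatively, refine the strata via generic smoothness. So all the annihilators lie in finitely many $(g-\rho_r)$-dimensional spaces. The correct fibre bound is therefore $g-\rho_r$, not $g-r$, and the total is $\rho_r+(g-\rho_r)=g$.

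\textbf{A simpler repair is already in your proposal.} Your Liouville computation $\phi^*\theta=0$ never uses $\dim W=g$. For an arbitrary component $W$ of $S_f$ it gives $\theta|_W=0$ on a dense open set. At a point $(a,\omega)$ there where $W\to Z:=\pi(W)$ is submersive, this says $\omega(T_aZ)=0$. Hence $W\subseteq T^*_ZA$, which is irreducible of dimension $g$. This yields $\dim W\le g$, and $W=T^*_ZA$ when equality holds, in one stroke, with no stratification needed.

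This is exactly the shape of the paper's proof. The paper shows directly, by generic smoothness of a resolution of $f^{-1}(Z)$ over $Z$, that $f_{*,x}(T_xX)\supseteq T_aZ$ for general $a\in Z$ and every $x\in f^{-1}(a)$, and concludes $W\subseteq T^*_ZA$. Your version applies generic smoothness to $W'\to W$ instead and records the tangent-space computation in the Liouville form. It is a clean alternative packaging of the same point, provided you use it for every component rather than only the $g$-dimensional ones.
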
 

\begin{proof}
Let $W\subseteq S_{f}$ be an irreducible component. Denote by $p_{1}\colon S_{f}\to A$, $p_{2}\colon S_{f}\to H^{0}(A,\Omega^{1}_{A})$ the two natural projections. Let $Z=p_{1}(W)\subseteq A$. Note that $S_{f}$ is conical and $W$ is closed in $ A\times H^{0}(A,\Omega^{1}_{A})$, it follows that $Z$ is closed in $A$. It suffices to show that $W$ is contained in the conormal variety $ T^{*}_{Z}A$. By definition, for every pair $(a,\omega)\in W$, there is a point $x\in f^{-1}(a)$ such that $\omega$ vanishes on the image of the tangent map $f_{*,x}\colon T_{x}{X}\to T_{a}A$. Claim that for a general closed point $a \in Z^{reg}$, we have $$f_{*,x}(T_{x}{X})\supseteq T_{a}Z.$$
In fact, we note that $Z\subseteq f(X)$. Take a resolution of singularities $\eta\colon \widetilde{f^{-1}(Z)}\to f^{-1}(Z)$ of the algebraic set $f^{-1}(Z)$. By generic smoothness, one can choose a nonempty open subset $U$ in $Z$ such that $f\circ \eta\colon (f\circ \eta)^{-1}(U)\to U$ is smooth. Denote $V\coloneqq (f\circ \eta)^{-1}(U)$. For any $a\in U$, and any $x\in f^{-1}(a)$, we take $\tilde{x}\in V$ such that $\eta(\tilde{x})=x$. Denote the natural map $V\to X$ by $\varphi$. Then we have $$T_aZ=(f\circ\eta)_{*,\tilde{x}}(T_{\tilde{x}}V)=f_{*,x}(\varphi_{*,\tilde{x}}(T_{\tilde{x}}V))\subseteq f_{*,x}(T_{x}{X}).$$
It follows that $(a,\omega)\in T^{*}_{Z^{reg}}A$, hence $W \subseteq T^{*}_{Z}A$.
\end{proof}

\begin{proof}[Proof of Theorem \ref{thm:non-linear example}]
Take any simple abelian $5$-fold $A$ and $Y\subset A$ a general hyperplane section, in particular, $Y$ is a smooth projective $4$-fold of general type, and the normal bundle $\cN_{Y/A}$ of $Y$ in $A$ is ample, since $\cN_{Y/A}=\cO_{A}(Y)|_{Y}$. Consider the conormal sequence \[0\to \cN^*_{Y/A}\to Y\times H^0(A, \Omega_A^1)\to \Omega_Y^1\to 0\] and the short exact sequence of fibres over each closed point $y\in Y$ 
\begin{equation}\label{eq:conormal-seq}
  0\to \cN^*_{Y/A, y}\to H^0(A, \Omega_A^1)\to \Omega^1_{Y,y}\to 0  
\end{equation}
 We then have the induced Gauss map  from $Y$ to the Grassmannian \[
 \cG\colon Y\to \textnormal{Gr}(1,H^0(A,\Omega_A^1)); y\mapsto [\cN^*_{Y/A, y}],
 \]
By \cite[Corollary~2]{Ran84} that $\cG$ is a finite morphism onto its image. Moreover, by \cite[Theorem]{WZ25}, one can pick a general irreducible smooth projective  curve $C\subset Y$ with the property that $\cG(C)\subset \textnormal{Gr}(1,H^0(A,\Omega_A^1))\cong \PP^4$ is not contained in any hyperplane of $\textnormal{Gr}(1,H^0(A,\Omega_A^1))\cong \PP^4$ (i.e., nondegenerate), and the set \[\cS_1:=\{\omega\in H^0(A,\Omega_A^1)\ |\ \omega|_Y \text{ has isolated zeros on }C\}\cup\{0\}\] is non-linear. For reader's convenience, we explain the non-linearity of $\cS_1$ here. In fact, by the conormal sequence (\ref{eq:conormal-seq}), $$\cS_1=\bigcup_{y\in C}\cN^*_{Y/A, y}=\bigcup_{[\cN^*_{Y/A, y}]\in \mathcal{G}(C)}\cN^*_{Y/A, y},$$ which is not linear, since $\mathcal{G}(C)$ is not contained in any hyperplane. Moreover, we observe that $\cS_1$ is irreducible and not contained in any proper vector subspace of $H^0(A,\Omega_A^1)$.

Now take any elliptic curve $E$ and consider the abelian 6-fold $E\times A$. Inside $E\times A$, we have smooth subvarieties $E\times C\subset E\times Y\subset E\times A.$ We will show that one can choose a smooth hyperplane section $X$ of $E\times Y$ that passes through $E\times C$, and the third lower level set $W^3(X)$ is not linear. We will proceed with several steps.

\textbf{Step 1. We show that there exists a smooth hyperplane section $X$ of $E\times Y$ such that $E\times C\subset X$.}

In fact, fix $L$ a very ample divisor on $E\times Y$, and denote by $\mathcal{I}_{E\times C}$ the ideal sheaf of $E\times C$. We  consider the short exact sequence %(after twisted by $L^{\otimes m}$ for any integer $m>0$)
\[
0\to \cI_{E\times C}\otimes L^{\otimes m}\to\cO_{E\times Y}\otimes L^{\otimes m}\to\cO_{E\times C}\otimes L^{\otimes m}\to 0
\]
and the induced long exact sequence
\[
0\to H^0(\cI_{E\times C}\otimes L^{\otimes m})\to H^0(\cO_{E\times Y}\otimes L^{\otimes m})\to H^0(\cO_{E\times C}\otimes L^{\otimes m})\to H^1(\cI_{E\times C}\otimes L^{\otimes m})
\]
By asymptotic Riemann-Roch and Serre vanishing, the leading term of $h^0(\cI_{E\times C}\otimes L^{\otimes m})$ is $am^5$ for some $a>0$ and $H^1(\cI_{E\times C}\otimes L^{\otimes m})=0$  for sufficiently large $m$. Then for $m\gg 0$, inside of the complete linear system $|mL|$, we have the linear system $\mathfrak{d}$ corresponding to $H^0(\cI_{E\times C}\otimes L^{\otimes m})$ whose scheme-theoretic base locus is $E\times C$. Then by the strong Bertini theorem \cite[Theorem 3.1]{DH91}, since $2\dim (E\times C)\leq \dim (E\times Y)$, there exists a hyperplane section $X$ in $\mathfrak{d}$ such that $X$ is smooth and contains $E\times C$.

\textbf{Step 2. We may assume that for the chosen hyperplane section $X$ in Step 1, the natural map $\pi\colon X\hookrightarrow E\times Y\to Y$ is finite away from $E\times C$.}

We take the embedding $E\times Y\subset \mathbb{P}^n$ given by $|mL|$. For a general hyperplane section $H$ of $E\times Y$, $H$ intersects curve $E_y\coloneqq E\times \{y\}$ for any closed point $y\in Y$. Hence the natural projection $H\hookrightarrow E\times Y\to Y$ is surjective and generically finite. Denote the inclusion $E\times Y\subset \PP^n$ by $i$. Then the following morphism $$\varphi\colon E\times Y\to \PP^n\times Y; (e,y)\mapsto (i(e,y), y)$$ gives a bounded smooth family of elliptic curves in $\PP^n$ over $Y$. Then we may assume that for the chosen $m$ in Step 1, and each closed point $y\in Y$,
\begin{equation}\label{eq:vanishingH1}
 H^1(E\times Y,  \cI_{E_y}\otimes L^{\otimes m})=H^1(E\times Y,  \cI_{(E\times C)\cup E_y}\otimes L^{\otimes m})=0,   
\end{equation}
 and 
 \begin{equation}\label{eq:H0>4}
  \dim H^0(E_y, L^{\otimes m}|_{E_y})\gg \dim Y=4.   
 \end{equation}
For an arbitrary closed point $y\in Y-C$, consider the inclusion $$E\times C\hookrightarrow (E\times C)\cup E_y,$$ where $(E\times C)\cup E_y$ is a disjoint union. Then we have a commutative diagram
$$\xymatrix{
	0\ar[r] & \mathcal{I}_{(E\times C)\cup E_y}\otimes L^{\otimes m} \ar@{^{(}->}[d] \ar[r] &L^{\otimes m} \ar@{=}[d]\ar[r]&L^{\otimes m}|_{(E\times C)\cup E_y} \ar@{->>}[d] \ar[r]&0 \\
0\ar[r] & \mathcal{I}_{E\times C}\otimes L^{\otimes m}   \ar[r] &L^{\otimes m}  \ar[r]&L^{\otimes m}|_{E\times C} \ar[r]&0. 
}$$

Hence, by Equation (\ref{eq:vanishingH1}), we get the following commutative diagram 
$$\xymatrix{
	0\ar[r] & H^0(\mathcal{I}_{(E\times C)\cup E_y}\otimes L^{\otimes m}) \ar@{^{(}->}[d] \ar[r] &H^0(L^{\otimes m}) \ar@{=}[d]\ar[r]& H^0(L^{\otimes m}|_{(E\times C)\cup E_y}) \ar@{->>}[d] \ar[r]&0 \\
0\ar[r] & H^0(\mathcal{I}_{E\times C}\otimes L^{\otimes m})   \ar[r] & H^0(L^{\otimes m})  \ar[r]& H^0(L^{\otimes m}|_{E\times C}) \ar[r]&0. 
}$$

Hence \begin{align*}
	\dim H^0(\mathcal{I}_{E\times C}\otimes L^{\otimes m})-\dim H^0(\mathcal{I}_{(E\times C)\cup E_y}\otimes L^{\otimes m}) \\     
	  = \dim H^0(L^{\otimes m}|_{(E\times C)\cup E_y})-\dim H^0(L^{\otimes m}|_{E\times C}) \\
	  =\dim H^0(L^{\otimes m}|_{E_y})\\
	  \gg 4,
\end{align*}
by Equation (\ref{eq:H0>4}).

Consider the dual space $\PP^{n*}$ of hyperplanes in $\PP^n$, and the following incidence set $$A\coloneqq \{(H, y)\in \PP^{n*}\times (Y-C)\ |\ E_y\subset H\},$$ together with two natural projections $p_1\colon A\to \PP^{n*}$ and $p_2\colon A\to Y-C$. We need to show that the intersection $p_1(A) \cap \mathbb{P}H^0(\mathcal{I}_{E\times C}\otimes L^{\otimes m})$ is a countable union of proper subschemes of $\mathbb{P}H^0(\mathcal{I}_{E\times C}\otimes L^{\otimes m})$. It suffices to show that
$$\dim p_1(A)\cap \PP H^0(\mathcal{I}_{E\times C}\otimes L^{\otimes m})< \dim \PP H^0(\mathcal{I}_{E\times C}\otimes L^{\otimes m})$$ since $A$ has a scheme structure. Consider the following diagram
\begin{equation}\label{dia:full-construction}
\xymatrix{
	&&A \ar[ld]_-{p_1}\ar[rd]^-{p_2}   \\
\PP H^0(\mathcal{I}_{E\times C}\otimes L^{\otimes m})\ar@{^{(}->}[r]	&\PP^{n*} && Y-C.
}
\end{equation}
For each closed point $y\in Y-C$, the fibre $p_2^{-1}(y)$ is a linear subspace of $\PP^{n*}\times \{y\}$.  Also, \begin{align*}
\dim p_1p_2^{-1}(y)\cap \PP H^0(\mathcal{I}_{E\times C}\otimes L^{\otimes m})\\
=\dim \PP H^0(\mathcal{I}_{(E\times C)\cup E_y}\otimes L^{\otimes m})\\
\ll \dim \PP H^0(\mathcal{I}_{E\times C}\otimes L^{\otimes m})-4
\end{align*} Hence $$\dim p_1(A)\cap \PP H^0(\mathcal{I}_{E\times C}\otimes L^{\otimes m})\ll \dim \PP H^0(\mathcal{I}_{E\times C}\otimes L^{\otimes m}).$$ 
\vspace{0.03em}

%Now we show that $A$ is a  subscheme of $\PP^{n*}\times (Y-C)$. Since $E\times (Y-C)\to Y-C$ is a flat family of curves in $\PP^n$, $E_y$ have the same Hilbert polynomial, denoted as $\Phi$, for all $y\in Y$. Now consider the Hilbert scheme $\text{Hilb}_{\PP^n}^{\Phi}$ of curves in $\PP^n$ with Hilbert polynomial $\Phi$, together with the universal family $\cE\to \text{Hilb}_{\PP^n}^{\Phi}$. By the universal property, the flat family $E\times (Y-C)\to Y-C$ is induced from a morphism $f\colon Y-C \to \text{Hilb}_{\PP^n}^{\Phi}$. Note that the scheme-theoretic intersection $H\cap E=E$ gives rise to a subscheme structure $$\cA\coloneqq \{(H, [E])\in \PP^{n*}\times\text{Hilb}_{\PP^n}^{\Phi}  | E\subset H\}.$$ The pullback of $\cA$ to $\PP^{n*}\times Y-C$ via $f\colon Y-C\to \text{Hilb}_{\PP^n}^{\Phi}$ is $A$.

To formulate Step 3, we fix the following notations. By the Lefschetz hyperplane theorem, the inclusion $Y\subset A$ is the Albanese map of $Y$, and $X\subset E\times Y$ have the same Albanese. Hence $$H^0(X, \Omega_X^1)\cong H^0(E\times Y, \Omega_{E\times Y}^1)= q_1^*H^0(E, \Omega_E^1)\oplus q_2^*H^0(A, \Omega_A^1),$$ where $q_1\colon E\times Y\to E$ and $q_2\colon E\times Y\to Y\hookrightarrow A$ are natural maps.

\textbf{Step 3. For any holomorphic 1-form $\omega=\omega_1+\omega_2$ on $E\times Y$  where $0\neq\omega_1\in q_1^*H^0(E, \Omega_E^1)$ and $\omega_2\in q_2^*H^0(A, \Omega_A^1)$, we show that $\omega|_X$ has only isolated zeros.}

Note that the normal bundle $$\cN_{X/E\times Y}\cong \cO_{E\times Y}(X)|_X=\cO(mL)|_X,$$ which is an ample line bundle on $X$. Also, $\omega=\omega_1+\omega_2$ is a holomorphic 1-form without zero on $E\times Y$ since $\omega_1\neq 0$.

Now suppose by contradiction that $\omega|_X$ admits a positive-dimensional zero loci $Z(\omega|_X)$, then there exists a projective integral curve $B$ contained in $Z(\omega|_X)$ set-theoretically. Consider the conormal sequence of $X$ inside $E\times Y$ 
	$$
\xymatrix{
	0 \ar[r] & \cN_{X/E\times Y}^* \ar[r] & \Omega^1_{E\times Y}|_{X} \ar[r]^-{\delta}&\Omega^1_{X} \ar[r] &0
}
$$ \[\delta\colon \omega\mapsto \omega|_X\] For any closed point $x\in B$, $\omega|_X(x)=0$ in the fibre $\Omega_{X,x}^1$, which means $\omega(x)$ is in the fibre $\cN_{X/E\times Y,x}^*$. Since $\omega$ is a holomorphic 1-form without zero on $E\times Y$, we have that $\omega$ gives rise to a nowhere vanishing section in $H^0(B, \cN_{X/E\times Y}^*|_{B})$. Taking the normalization $\xi\colon \widetilde{B}\to B$, we have $$\dim H^0(\widetilde{B}, \xi^*\cN_{X/E\times Y}^*|_{B})\geq \dim H^0(B, \cN_{X/E\times Y}^*|_{B})>0,$$
in particular, $\deg \xi^*\cN_{X/E\times Y}^*|_{B}>0$. Moreover, since $\cN_{X/E\times Y}$ is  an ample line bundle on $X$, $\xi^*\cN_{X/E\times Y}|_{B}$ is ample on $\widetilde{B}$. Then $\deg \xi^*\cN_{X/E\times Y}^*|_{B}<0$, which is a contradiction. Thus we are done for Step 3.
\vspace{0.5cm}

By Step 3, all holomorphic 1-forms $\eta\in W^3(X)$ are of the type $\eta=\omega|_X$, where  $\omega\in q_2^*H^0(A, \Omega_A^1)$, i.e., we have \begin{equation}\label{eq:w1-from-Y}
    W^3(X)\subseteq \pi^*H^0(Y, \Omega_Y^1)=\pi^*k^*H^0(A,\Omega_A^1),
\end{equation} where $k$ denotes the inclusion map $Y\to A$. Now we consider the following commutative diagram (\ref{dia:full-construction}) and fix several notations.
\begin{equation}\label{dia:full-construction}
\xymatrix{
	 E\times C \ar[r]^-{p'}\ar@{^{(}->}[d]_{\beta}  \ar@/_4 pc/[dd]_{\alpha} &C\ar@{^{(}->}[d]^{\gamma}\\
   E\times Y \ar[r]^-p & Y \ar@{_{(}->}[d]^k\\  
	X\ar@{->>}[ru]^{\pi}  \ar[r]_-f \ar@{_{(}->}[u]^j & A 
}
\end{equation}

By the purity theorem of  branched loci (\cite[Proposition~2]{Zar58}), the branched loci of the morphism $X\setminus(E\times C)\to Y\setminus C$ is a divisor $D$. Denote the image of $D$ in $Y\setminus C$ by $\Delta$. Then for any closed point $x\in D$, the tangent map $\pi_{\ast,x}$ at $x$ is not surjective, and  $\pi$ is \'etale away from $Y-(C \cup \Delta)$. By \cite[Proposition~6.3.10]{PAG2}, for any holomorphic 1-form $\mu$ on $Y$, $\mu$ has isolated zeros on $Y$. It follows that for any 
holomorphic 1-form $\eta\in W^3(X)$, $\eta|_{X-((E\times C)\cup D)}$ has at most isolated zeros.

\textbf{Step 4. For a general holomorphic 1-form $\xi$ on $A$, we show that $\pi^{\ast}(\xi|_Y)|_{X-(E\times C)}$ has only isolated zeros.}

Recall that the set $$\mathcal{S}_1:=\{\omega\in H^0(A,\Omega_A^1)\ |\ \omega|_Y \text{ has  isolated zeros on }C\}\cup\{0\}$$ is not linear, in particular, $\mathcal{S}_1\neq H^0(A,\Omega_A^1)$. Thus for a general holomorphic 1-form $\xi$ on $A$, $\xi|_Y$ has isolated zeros on $Y-C$ by \cite[Proposition~6.3.10]{PAG2}. Now suppose by contradiction that for a general holomorphic 1-form $\xi\in H^0(A, \Omega_A^1)$, $$\dim\pi^{\ast}(\xi|_Y)|_{X-(E\times C)}\geq 1.$$

 Consider the morphism $f=k\circ\pi\colon X \to A$, which is generically finite with exceptional set $E\times C$, and
 $$Z_{f}=\{(x,\xi)\in X\times H^{0}(A,\Omega^{1}_{A}) ~|~~ f^{*}\xi(x)=0 \}$$ and $$S_{f}\coloneqq(f\times \mathrm{id})(Z_{f})=\{(a,\xi)\in A\times H^{0}(A,\Omega^{1}_{A}) ~|~~  f^{*}\xi(x)=0,\ \text{for some }x\in f^{-1}(a) \}.$$
Note that the  map $\phi\colon S_f\cap \{(A-C)\times H^0(A, \Omega_A^1)\}\to H^0(A, \Omega_A^1)$ induced by the second projection is dominant, since for a general holomorphic 1-form $\xi$ on $A$, $\xi|_Y$ has isolated zeros on $Y-C$. By contradiction hypothesis, the general fibre of $\phi$ has dimension $\geq 1$, it follows that $$\dim S_f\geq \dim S_f\cap \{(A-C)\times H^0(A, \Omega_A^1)\}\geq \dim H^0(A, \Omega_A^1)+1.$$ However, $\dim S_f\leq \dim A$ according to Lemma~\ref{lem:conormalspace-nonlinear}. This is a contradiction. We are done for Step 4.

\textbf{Step 5. We show that $W^3(X)$ is not linear.}

For any holomorphic 1-form $\omega\in \cS_1$, $\pi^{\ast}(\omega|_Y)\in W^3(X)$ since $\pi^{\ast}(\omega|_Y)$ has zeros $E\times\{y\}$ for some point $y\in C$ such that $\omega|_Y(y)=0$. We have seen that $k^\ast\colon H^0(A, \Omega_A^1)\to H^0(Y, \Omega_Y^1)$ is an isomorphism and $\pi^{\ast}\colon H^0(Y, \Omega_Y^1)\to H^0(X, \Omega_X^1)$ is injective by the Lefschetz hyperplane theorem. Then  $\pi^{\ast}k^\ast\cS_1$ is irreducible, not linear, and $\pi^{\ast}k^\ast\cS_1\subseteq W^3(X)$.  Moreover, since $\cS_1$ is not contained in any proper vector subspace of $H^0(A,\Omega_A^1)$, $\pi^{\ast}k^\ast\cS_1$ is not contained in any proper vector subspace of $\pi^{\ast}k^\ast H^0(A,\Omega_A^1)$.

Suppose by contradiction that $W^3(X)$ is linear. By Equation (\ref{eq:w1-from-Y}) and previous discussion, we have $$\pi^{\ast}k^\ast\cS_1\subseteq W^3(X)\subseteq\pi^{\ast}k^\ast H^0(A,\Omega_A^1)=\pi^{\ast}H^0(Y,\Omega_Y^1).$$  %which is a 5-dimensional vector space. 
Therefore, we get 
\begin{equation}\label{eq:w3=wholespace}
    W^3(X)=\pi^{\ast}k^\ast H^0(A,\Omega_A^1)=\pi^{\ast}H^0(Y,\Omega_Y^1).
\end{equation} 
By Step 4 and Equation (\ref{eq:w3=wholespace}), for a general holomorphic 1-form $\xi$ on $A$, $\pi^{\ast}(\xi|_Y)$ has higher-dimensional zeros $Z(\pi^{\ast}(\xi|_Y))$ in $E\times C$. 

\textbf{Claim $\spadesuit$: $\pi (Z(\pi^{\ast}\xi|_Y))\neq C$ for general $\xi$ on $A$.} Suppose by contradiction that $\pi (Z(\pi^{\ast}\xi|_Y))=C$  for general $\xi$ on $A$. Consider $$S_{f}\coloneqq(f\times \mathrm{id})(Z_{f})=\{(a,\xi)\in A\times H^{0}(A,\Omega^{1}_{A}) ~|~~  f^{*}\xi(x)=0,\ \text{for some }x\in f^{-1}(a) \},$$
and denote  by $\psi\colon S_f\to H^0(A,\Omega_A^1)$ the natural projection. By the contradiction hypothesis, we have the general fibre of $\psi$ has dimension $\geq 1$, it follows that $$\dim S_f \geq \dim H^0(A, \Omega_A^1)+1.$$ However, $\dim S_f\leq \dim A$ according to Lemma~\ref{lem:conormalspace-nonlinear}. This is a contradiction. Claim $\spadesuit$ holds true.

We proceed by analyzing the tangent maps. Taking any closed point $x=(e,y)\in E\times C$, via the commutative diagram (\ref{dia:full-construction}), we have the induced commutative diagram of tangent maps \begin{equation}\label{dia:tangent maps}
\xymatrix{
 T_eE\oplus T_yC \ar[r]^-{p'_{*,x} }\ar@{^{(}->}[d]_{\beta_{*,x}}  \ar@/_4 pc/[dd]_{\alpha_{*,x}} &T_yC\ar@{^{(}->}[d]^{\gamma_{*,y}}\\
   T_eE\oplus T_yY \ar[r]^-{p_{*,x}} & T_yY \\  
	T_xX\ar[ru]_{\pi_{*,x}}    \ar@{_{(}->}[u]^{j_{*,x}},
}
\end{equation} 
where the inclusions $\alpha_{*,x}, \beta_{*,x}, \gamma_{*,y}, j_{*,x}$ are induced by natural inclusions, and the projections $p'_{*,x}, p_{*,x}$ are induced by the second projections. From commutative diagram (\ref{dia:tangent maps}), we know that the tangent map $\pi_{*,x}\colon T_xX\to T_yY$ factors through $T_eE\oplus T_yY$. Since $j_{*,x}$ is injective and $p_{*,x}$ only kills $T_eE$, we obtain that 
\begin{equation}\label{eq:ker-3dim}
    \ker\pi_{*,x}=T_eE,\ \text{and}\ \dim \pi_{*,x}(T_xX)=3.
\end{equation} 
For a general 1-form $\xi$ on $A$, $\pi^{\ast}(\xi|_Y)$ has 1-dimensional zeros of type $E_y=E\times \{y\}$ for some closed points $y\in C$ by Claim $\spadesuit$.  Note that $$\pi^*(\xi|_Y)(E_y)=0\Leftrightarrow \xi|_Y(\bigcup_{x\in E_y}\pi_{*,x}(T_xX))=0.$$ According to Equation (\ref{eq:ker-3dim}), we have the following two situations. 

1) If $\bigcup_{x\in E_y}\pi_{*,x}(T_xX)=T_yY$, we know that $\xi|_Y$ kills $T_yY$ if and only if $\xi|_Y$ has isolated zeros at $y\in C$. This implies that $\xi\in\cS_1$. Denote $$\Sigma_1\coloneqq \{y\in C\ |\ \bigcup_{x\in E_y}\pi_{*,x}(T_xX)= T_yY\}.$$ Then we have $$\dim\{\xi\in H^0(A, \Omega_A^1)\ |\ \pi^{\ast}(\xi|_Y)(E_y)=0\ \text{for some}\ y\in \Sigma_1\}\leq 2.$$

2) If $\bigcup_{x\in E_y}\pi_{*,x}(T_xX)\subsetneq T_yY$. Consider the exact conormal  sequence

\[
    \begin{tikzcd}
        0 \arrow[r]&\cN_{Y/A,y}^{*}\arrow[r]&\{y\}\times H^0(\Omega_A^1)\arrow[r, "\mu"]&  \Omega_{Y,y}^1 \arrow[r]&0
    \end{tikzcd}
    \]
    where $\mu(\xi)=\xi|_Y$.  For any fixed $y_0\in C$ such that $\bigcup_{x\in E_{y_0}}\pi_{*,x}(T_xX)\subsetneq T_{y_0}Y$, the holomorphic 1-forms $\xi$ on $A$ such that $\pi^{\ast}(\xi|_Y)$ have zeros along $E_{y_0}$ form a 2-dimensional cone, since $\dim \bigcup_{x\in E_y}\pi_{*,x}(T_xX)=3$ by Equation (\ref{eq:ker-3dim}). 
    Denote $$\Sigma_2\coloneqq \{y\in C\ |\ \bigcup_{x\in E_y}\pi_{*,x}(T_xX)\subsetneq T_yY\}.$$ Then we have $$\dim\{\xi\in H^0(A, \Omega_A^1)\ |\ \pi^{\ast}(\xi|_Y)(E_y)=0\ \text{for some}\ y\in \Sigma_2\}\leq 3.$$
    
All in all, this contradicts the previous conclusion that for a general holomorphic 1-form $\xi$ on $A$, $\pi^{\ast}(\xi|_Y)$ has zeros along $E_y$ for some $y\in C$. We are done for the proof of this theorem.
\end{proof}

\subsection{Basic properties of non-formal 1-forms}

With the previous non-linear example, one could expect that holomorphic 1-forms that have zeros and do not come from cohomology jump loci are quite wild. We will discuss this kind of  holomorphic 1-forms in this subsection.

\begin{definition}
Let $X$ be a smooth complex projective variety. We call a holomorphic 1-form $\omega\in W^i(X)\backslash V^i(X)$ for some $i$ a \textit{non-formal}\footnote{The theory of cohomology jump loci  of variety $X$ is essentially derived from the formality property of the underlying CW complex of $X$, so we name the holomorphic 1-forms that have zeros and do not come from cohomology jump loci in this way.} holomorphic 1-form of $X$.
\end{definition}
 
We have the following criterion for non-formal holomorphic 1-forms, which is a direct consequence of \cite{GL87}.
\begin{proposition}\label{nonHodge}
Let $\omega$ be a holomorphic 1-form with zeros on $X$ such that $\codim_XZ(\omega)=k$. Then the following statements are equivalent :
    \begin{itemize}
        \item[(a)] $H^{0}(X,\cH^k(\Omega_X^{\bullet},\wedge\omega))=0$, where $\cH^k(\Omega_X^{\bullet},\wedge\omega)$ is the $k$-th cohomology sheaf of the Koszul complex $(\Omega_X^{\bullet},\wedge\omega)$ of sheaves of holomorphic forms;
        \item[(b)] $\omega$ is a non-formal holomorphic $1$-form.
    \end{itemize}
\end{proposition}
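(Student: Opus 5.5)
The plan is to compare the two hypercohomology spectral sequences of the Koszul complex $K^\bullet:=(\Omega_X^\bullet,\wedge\omega)$. The key input from \cite{GL87} is that the cohomology of the derivative complex $(H^\bullet(X,\CC),\wedge\omega)$ in degree $j$ equals $\mathbb{H}^j(X,K^\bullet)$.

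\textbf{Step 1: the first spectral sequence.} Use the Hodge decomposition $H^j(X,\CC)=\bigoplus_{p+q=j}H^q(X,\Omega_X^p)$. Since $\omega$ is a holomorphic $1$-form, wedging with $\omega$ maps $H^q(X,\Omega^p_X)$ to $H^q(X,\Omega_X^{p+1})$. Hence the cohomology of $(H^\bullet(X,\CC),\wedge\omega)$ in degree $j$ is $\bigoplus_{p+q=j}E_2^{p,q}$, where $E_1^{p,q}=H^q(X,\Omega_X^p)$ is the first hypercohomology spectral sequence of $K^\bullet$ and $d_1=\wedge\omega$. Green--Lazarsfeld prove that this spectral sequence degenerates at $E_2$. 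I will quote this rather than reprove it; it is the only deep input, and it comes from Hodge theory together with complex conjugation. Consequently,
\[
H^j\bigl(H^\bullet(X,\CC),\wedge\omega\bigr)\cong \mathbb{H}^j(X,K^\bullet)\quad\text{for all } j .
\]

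\textbf{Step 2: the second spectral sequence.} Consider $E_2^{p,q}=H^p(X,\cH^q(K^\bullet))\Rightarrow \mathbb{H}^{p+q}(X,K^\bullet)$. Locally $\omega=\sum f_i\,dz_i$, and $K^\bullet$ is the Koszul complex of $(f_1,\dots,f_n)$. Since $X$ is smooth and hence Cohen--Macaulay, the depth of the ideal $(f_1,\dots,f_n)$ at points of $Z(\omega)$ equals its codimension, which is at least $k$. So $\cH^q(K^\bullet)=0$ for $q<k$. Therefore $\mathbb{H}^j(X,K^\bullet)=0$ for $j<k$. Moreover, $E_2^{0,k}$ admits no nonzero incoming or outgoing differentials: each $d_r$ leaving it lands in a row $q=k-r+1<k$, which is zero. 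Hence $\mathbb{H}^k(X,K^\bullet)\cong H^0(X,\cH^k(K^\bullet))$.

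\textbf{Step 3: bookkeeping with the definitions.} Note that $\omega\in W^i(X)$ if and only if $i\ge k$. Also, $V^i(X)$ is increasing in $i$, because it requires non-exactness at some degree $j\le i$. By Steps 1 and 2, the derivative complex is exact in all degrees $j<k$. So $\omega\in V^k(X)$ holds exactly when the complex fails to be exact at degree $k$, that is, when $H^0(X,\cH^k(K^\bullet))\neq0$.

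If $\omega\notin V^k(X)$, then $\omega\in W^k(X)\setminus V^k(X)$, so $\omega$ is non-formal. Conversely, suppose $\omega\in V^k(X)$. Then $\omega\in V^i(X)$ for every $i\ge k$, while $\omega\notin W^i(X)$ for every $i<k$. So $\omega$ lies in no $W^i(X)\setminus V^i(X)$, and it is not non-formal. This proves (a)$\Leftrightarrow$(b).

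The main obstacle is the $E_2$-degeneration in Step 1. I would not attempt to reprove it, and would cite \cite{GL87} directly (their Proposition 3.7 and the argument behind Proposition 3.4, already used above to show $V^i(X)\subseteq W^i(X)$).
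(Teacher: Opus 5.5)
Your proposal is correct and follows the same route as the paper. You compare the two hypercohomology spectral sequences of $(\Omega_X^\bullet,\wedge\omega)$, use the Green--Lazarsfeld $E_2$-degeneration plus the Hodge decomposition to identify $\mathbb{H}^k$ with the degree-$k$ cohomology of $(H^\bullet(X,\CC),\wedge\omega)$, and use $\cH^q=0$ for $q<k$ to get $\mathbb{H}^k\cong H^0(X,\cH^k)$. Your Step 3 spells out a step the paper leaves implicit: it reduces ``$\omega\in W^i(X)\setminus V^i(X)$ for some $i$'' to the single index $i=k$, using that $V^i$ increases with $i$ and that the complex is exact in degrees below $k$.
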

\begin{proof}
   Denote the Koszul complex $(\Omega_X^{\bullet},\wedge\omega) $  by $K^{\bullet}(\omega)$, and the $k$-th cohomology sheaf of $K^{\bullet}(\omega)$  by $\cH^k$. Then we have two  spectral sequences:

\[
^{'}E_{1}^{p,q} = H^{q}(X, K^{p}(\omega)) \Rightarrow \HH^{p+q}(K^{\bullet}(\omega))
\]

\[
^{''}E_{2}^{p,q} = H^{p}(X, \cH^{q}) \Rightarrow \HH^{p+q}(K^{\bullet}(\omega))
\]
Note that we have following facts \cite[Proposition~3.7 and Lemma~3.8]{GL87}:
\begin{itemize}
    \item $\Supp \cH^{i}\subset Z(\omega)$ for all $i$;
    \item $^{'}E_{1}^{p,q}$ degenerates at  $E_{2}$ page;
    \item $\cH^{i}= 0$ for $i < k$.
\end{itemize} 
Then based on the spectral sequence $^{''}E_{2}^{p,q}$, we have 
\[
^{''}E_{2}^{0,k}= H^{0}(X, \cH^{k})=0\text{ if and only if }   \HH^k(K^{\bullet}(\omega))=0.
\]
From the spectral sequence of $^{'}E_{1}^{p,q}$, we infer that
 $\HH^k(K^{\bullet}(\omega))=0$ if and only if the sequence
 \[
 \cdots\rightarrow H^i(X,\Omega_{X}^{k-i-1}) \xrightarrow{\wedge \omega} H^i(X,\Omega_{X}^{k-i}) \xrightarrow{\wedge \omega} H^i(X,\Omega_{X}^{k-i+1}) \xrightarrow{\wedge \omega} \cdots 
 \]
 is exact at $H^i(X,\Omega_{X}^{k-i})$ for all $i$, via Hodge decomposition, which is also equivalent to that the following sequence
 \[
 \cdots\rightarrow H^{k-1}(X,\CC) \xrightarrow{\wedge \omega} H^{k}(X,\CC) \xrightarrow{\wedge \omega} H^{k+1}(X,\CC)\xrightarrow{\wedge \omega} \cdots 
 \]
 is exact at $H^k(X,\CC)$. Then the equivalence holds true.
\end{proof}

Based on the above cohomological criterion, we obtain the following simple lemma.
\begin{lemma}\label{lem:positivedim}
    If $\omega$ is a non-formal 1-form, then $\dim Z(\omega)>0$.
\end{lemma}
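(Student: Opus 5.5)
We argue by contraposition: if $\dim Z(\omega)=0$, then $\omega$ is not non-formal. Take $\omega$ with zeros, so $Z(\omega)\neq\emptyset$. If $Z(\omega)$ is a nonempty finite set, then $\codim_X Z(\omega)=n=\dim X$. By definition, $\omega\in W^n(X)$. Moreover, $\omega\in W^i(X)$ exactly when $i\geq n$, and $W^i(X)$ is only meaningful for $i\leq n$. So the only way $\omega$ can be non-formal is $\omega\notin V^n(X)$. By Proposition \ref{nonHodge} with $k=n$, this is equivalent to $H^0(X,\cH^n(\Omega_X^{\bullet},\wedge\omega))=0$. It therefore suffices to show that this group is nonzero whenever $Z(\omega)$ is a nonempty finite set.

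Next we compute the top cohomology sheaf of the Koszul complex. The complex ends with $\Omega_X^{n-1}\xrightarrow{\wedge\omega}\Omega_X^n\to 0$, so
\[
\cH^n(\Omega_X^{\bullet},\wedge\omega)=\Omega_X^n/(\omega\wedge\Omega_X^{n-1}).
\]
Work in local coordinates and write $\omega=\sum_i g_i\,dz_i$. The image of $\wedge\omega$ is then $I\cdot\Omega_X^n$, where $I=(g_1,\dots,g_n)$ is the ideal of the zero scheme of $\omega$. Hence $\cH^n\cong\Omega_X^n\otimes\cO_X/I$, which is supported exactly on $Z(\omega)$ and is nonzero at every point of $Z(\omega)$, since $I$ lies in the maximal ideal at each zero. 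When $Z(\omega)$ is finite, this is a nonzero skyscraper sheaf of finite length, and so $H^0(X,\cH^n)\neq 0$. Then Proposition \ref{nonHodge} gives $\omega\in V^n(X)$, so $\omega$ is not non-formal, which is the contrapositive we wanted.

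As a cross-check, the same conclusion follows directly. For isolated zeros, the Green--Lazarsfeld results already used in Proposition \ref{nonHodge} give $\cH^i=0$ for $i<n$. The hypercohomology spectral sequence then yields $\HH^n(K^\bullet(\omega))=H^0(\cH^n)\neq 0$, hence non-exactness at $H^n(X,\CC)$ and $\omega\in V^n(X)$. No step is a serious obstacle. The one point needing care is the bookkeeping in the definition of non-formal: we must confirm that for an isolated-zero form the only relevant index is $i=n$, so that $\omega\in V^n(X)$ really means $\omega\notin W^i(X)\setminus V^i(X)$ for every $i$. The identification of $\cH^n$ with $\Omega^n_X\otimes\cO_{Z(\omega)}$ (scheme-theoretically) is routine.
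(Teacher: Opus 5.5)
Your proposal is correct and follows essentially the same route as the paper: reduce via Proposition~\ref{nonHodge} (with $k=n$) to showing $H^0(X,\cH^n)\neq 0$, identify $\cH^n$ with $\Omega^n_X$ restricted to the zero scheme of $\omega$, and observe that this is a nonzero skyscraper sheaf when $Z(\omega)$ is finite and nonempty. The only differences are that you verify $\cH^n\cong\Omega^n_X\otimes\cO_X/I$ by a direct local computation where the paper cites \cite[Lemma~4.2]{Sch21}, and you spell out the index bookkeeping in the definition of non-formal (that $\omega\in W^i$ forces $i\geq n$ and $V^n\subseteq V^i$), which the paper leaves implicit.
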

\begin{proof}
     Suppose that $\dim X=n$. In local coordinates $x_1,x_2,\cdots,x_n$, the holomorphic 1-form $\omega$ can be written into $\omega=\sum_{i=1}^{n}f_idx_i$ locally. Consider the map by wedging $\omega$ :
$$
\begin{tikzcd}
\Omega_{X}^{n-1} \arrow[r,"\wedge \omega"] & \Omega_{X}^{n} \\
\eta \arrow[r]                 & \sum_{i=1}^{n}f_idx_i\wedge\eta
\end{tikzcd}
$$
By \cite[Lemma~4.2]{Sch21}, $\cH^n(\Omega_X^{\bullet},\wedge\omega)\cong \Omega_X^n|_{Z(\omega)}$. Suppose by contradiction that $\dim Z(\omega)=0$. By Proposition~\ref{nonHodge}, we would have 
$$
H^0(X, \cH^n)=H^0(Z(\omega), \Omega_X^n|_{Z(\omega)})=0,
$$
which is a contradiction.
\end{proof}

In the rest of this section, we provide some discussions on surfaces with non-formal 1-forms.

\begin{lemma}\label{lem:nonreduced}
    Let $S$ be a smooth projective surface, $f\colon S\to E$ be a morphism onto an elliptic curve $E$ and $\omega\in H^0(E, \Omega_E^1)$ be a nonzero holomorphic 1-form. If $f^{\ast}\omega$ is a non-formal holomorphic 1-form, then there exists a 1-dimensional irreducible component $F$ of $Z(\omega)$, such that $F$ is $f$-vertical and the component of the scheme-theoretic fibre $f^{\ast}(f(F))$ supported on $F$ is non-reduced. 
\end{lemma}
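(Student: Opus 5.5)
Write $\eta=f^*\omega$; the statement means $Z(\eta)$ (not $Z(\omega)$, which is empty on $E$). First I locate the zeros. Choose a local coordinate $t$ on $E$ with $\omega=dt$. Then $\eta=d(t\circ f)$ near any point of $S$, so $\eta$ vanishes at $x$ exactly when $x$ is a critical point of $f$. Hence $Z(\eta)$ lies in the union of the singular fibres, and every $1$-dimensional component of $Z(\eta)$ is $f$-vertical. By Lemma \ref{lem:positivedim}, the non-formality of $\eta$ gives $\dim Z(\eta)>0$. So at least one vertical curve lies in $Z(\eta)$.

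Next I show that a fibre component which is reduced in its fibre cannot lie in $Z(\eta)$. Let $F$ be an irreducible component of a fibre $f^*(p)=\sum m_iF_i$. Near a general point of $F=F_1$, choose coordinates $(u,v)$ with $F=\{u=0\}$, so that $t\circ f=u^{m_1}\cdot(\text{unit})$. Then
\[\eta = m_1u^{m_1-1}\,(\text{unit})\,du+u^{m_1}\,d(\text{unit}).\]
If $m_1=1$, the $du$-coefficient is a unit, so $\eta\neq 0$ at general points of $F$. Thus $F\not\subset Z(\eta)$. If $m_1\ge 2$, $\eta$ vanishes along $F$ to order $m_1-1$. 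Therefore the $1$-dimensional components of $Z(\eta)$ are exactly the vertical components of fibre multiplicity $\ge 2$.

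The main obstacle is to rule out the case where $\dim Z(\eta)>0$ but no non-reduced fibre component exists. By the previous step, this case is empty: a $1$-dimensional component of $Z(\eta)$ exists by Lemma \ref{lem:positivedim}, and any such component must have multiplicity $\ge 2$. So the combination is immediate.

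If extra care is needed, one can also cross-check with Proposition \ref{nonHodge}. Suppose every fibre component were reduced. Then $Z(\eta)$ would be finite, and $\mathcal H^2$ would be a nonzero skyscraper sheaf by \cite[Lemma~4.2]{Sch21}. This would give $H^0(S,\mathcal H^2)\neq0$, contradicting (a) with $k=2$. In either version, take $F$ to be a $1$-dimensional component of $Z(\eta)$. It is $f$-vertical, and the component of $f^*(f(F))$ supported on $F$ has multiplicity $\ge2$, so it is non-reduced.
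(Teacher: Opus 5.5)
Your proof is correct and follows essentially the paper's route: Lemma \ref{lem:positivedim} produces a curve in $Z(f^*\omega)$, generic smoothness makes it vertical, and the vanishing of the differential along it forces multiplicity $\geq 2$. The paper does the last step algebraically at the generic point of $F$: it writes $f^{\sharp}(t)=s^k$ (up to a unit) in the DVR $\mathcal{O}_{S,\eta}$ and uses that the cotangent map vanishes to get $k\ge 2$. This is the same as your local computation of $d(u^{m}\cdot\text{unit})$, which also gives the converse that every non-reduced fibre component lies in $Z(f^*\omega)$.
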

\begin{proof}
    By Lemma~\ref{lem:positivedim}, there exists $F\subset Z(\omega)$ with $\dim F=1$. Since $f$ is generically smooth, $F$ is $f$-vertical, say $f(F)=p$ for some closed point $p\in E$. Now consider the local homomorphism between discrete valuation rings $f^{\sharp}\colon \cO_{E,p}\to \cO_{S,\eta}$ where $\eta$ is the generic point of $F$. Fix  uniformizers $t\in \cO_{E,p}\text{ and } s\in \cO_{S,\eta}$ respectively, and denote $f^{\sharp}(t)=s^{k}$ for some integer $k$. For any closed $x\in F$, the cotangent map $\mathfrak{m}_{p}/{\mathfrak{m}_{p}^2}\to\mathfrak{m}_{x}/{\mathfrak{m}_{x}^2}$ is zero, where $\mathfrak{m}$ denote maximal ideals. Then $\mathfrak{m}_{p}/{\mathfrak{m}_{p}^2}\to\mathfrak{m}_{\eta}/{\mathfrak{m}_{\eta}^2}$ is the zero map. Therefore, $f^{\sharp}(\mathfrak{m}_p)\subset {\mathfrak{m}_{\eta}^2}$, that is $k\geq 2$.
\end{proof}
\begin{remark}\label{rmk:q=1}
    If $\alb.\dim S=1$, then the converse is also true. Indeed, we claim that $q(S)=1$. Granting this claim for the moment. The Albanese morphism $a_S$ of $S$ is a fibration over an elliptic curve $\Alb(S)$. Then for $0\neq\omega\in H^0(\Alb(S), \Omega^1_{\Alb(S)})$ the sequence  \[
 0\to H^0(S,\CC) \xrightarrow{\wedge a_S^{\ast}\omega} H^1(S,\CC) \xrightarrow{\wedge a_S^{\ast}\omega} H^2(S,\CC) 
 \]
        is exact at $H^1(S,\CC)$, since $q(S)=1$.  Also, $\dim Z(f^{\ast}\omega)=1$. Hence $f^{\ast}\omega$ is non-formal by definition.
        
        As for the claim. If $q(S)\geq 2$. Then the Albanese fibration $a_S\colon S\to C$ is a fibration over a smooth curve $C$ of genus $q(S)$. Then there is a holomorphic 1-form $\eta$ on $C$ which is not proportional to $\omega$, such that $\eta\wedge\omega=0$ on $C$. In this case, the sequence  \[
 0\to H^0(S,\CC) \xrightarrow{\wedge a_S^{\ast}\omega} H^1(S,\CC) \xrightarrow{\wedge a_S^{\ast}\omega} H^2(S,\CC) 
 \]
        is not exact at $H^1(S,\CC)$, which is a contradiction.
        
\end{remark}

\begin{lemma}\label{lem:exist-nonhodge+albdim1}
    Let $X$ be a smooth projective variety admitting a non-formal 1-form $\omega$ and $\alb.\dim(X)=1$, then the irregularity $q(X)=1$.
\end{lemma}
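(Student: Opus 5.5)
The plan is to imitate the argument of Remark~\ref{rmk:q=1}. Since $\alb.\dim(X)=1$, the Albanese image is a smooth curve $C$ of genus $g=q(X)\geq 1$. The Stein factorization of $X\to\Alb(X)$ gives a fibration $a\colon X\to C$ with connected fibres, and every holomorphic 1-form on $X$ is pulled back from $C$. Indeed, $H^0(X,\Omega_X^1)=a_X^*H^0(\Alb(X),\Omega^1)$, and these forms factor through the curve. So $\omega=a^*\eta$ for some nonzero $\eta\in H^0(C,\Omega_C^1)$. We argue by contradiction and assume $g\geq 2$.

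\textbf{Step 1 (codimension of the zeros).} First pin down $k=\codim Z(\omega)$. Because $a$ is a fibration, the fibre over any zero of $\eta$ on $C$ lies in $Z(\omega)$. When $g\geq 2$, $\eta$ has zeros, so $Z(\omega)$ contains a divisor and $k=1$. Hence $\omega\in W^1(X)$.

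\textbf{Step 2 (show $\omega\in V^1(X)$).} It suffices to show that $(H^\bullet(X,\CC),\wedge\omega)$ is not exact at $H^1(X,\CC)$. Choose $\eta'\in H^0(C,\Omega^1_C)$ not proportional to $\eta$, which is possible since $g\geq 2$. Since $C$ is a curve, $\eta\wedge\eta'=0$, so $a^*\eta'\wedge\omega=0$ in $H^2(X,\CC)$. On the other hand, $a^*\eta'$ is not in the image of $H^0(X,\CC)\xrightarrow{\wedge\omega}H^1(X,\CC)$, because that image is $\CC\cdot\omega$. Here we use that $a^*$ is injective on $H^0(C,\Omega_C^1)$, since $a$ is surjective. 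Therefore the complex is not exact at $H^1$. Exactness at $H^0$ is automatic because $\omega\neq 0$. It follows that $\omega\in V^1(X)$, and in fact $\omega\in V^i(X)$ for all $i\geq 1$, since the definition of $V^i$ only requires non-exactness in some degree $j\leq i$.

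\textbf{Step 3 (conclusion).} By definition, $\omega$ is non-formal, so there is some $i$ with $\omega\in W^i(X)\setminus V^i(X)$. Membership in $W^i$ forces $i\geq k=1$. But Step 2 gives $\omega\in V^i(X)$ for every $i\geq 1$, which is a contradiction. Hence $g=q(X)=1$. As a cross-check, Proposition~\ref{nonHodge} would give the same conclusion via $\HH^1(K^\bullet(\omega))\neq 0$.

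The main obstacle is Step 1 together with the reduction to forms pulled back from $C$. In particular, one must check that all 1-forms on $X$ really come from the Stein-factorized curve $C$, and not merely from $\Alb(X)$. This should follow because the Albanese image is a curve whose normalization is $C$ and the forms on $\Alb(X)$ restrict to it. Once that is settled, the cohomological part is routine.
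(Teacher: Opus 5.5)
Your proposal is correct and follows essentially the paper's argument. Assuming $q(X)\ge 2$, you take a second 1-form pulled back from the Albanese curve; it wedges to zero with $\omega$ and is not proportional to it, so $(H^\bullet(X,\CC),\wedge\omega)$ fails to be exact in degree $1$ (the paper phrases this in the Hodge piece $H^0(\mathcal{O}_X)\to H^0(\Omega^1_X)\to H^0(\Omega^2_X)$), contradicting non-formality; your Step~1 is harmless but unnecessary, since $\codim Z(\omega)\ge 1$ holds for any nonzero $\omega$, and the factorization of all 1-forms through the curve is the standard fact the paper cites from Ueno.
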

\begin{proof}
    Let $\alpha\colon X\to \Alb(X)$ be the Albanese morphism. By assumption, we know that $\alpha(X)$ is a smooth curve $C$ of genus $q(X)$ (see e.g., \cite[Proposition 9.19]{Ueno75}).  Suppose by contradiction that $q(X)=g(C)>1$, then there exists another holomorphic 1-form $\omega'$ linearly independent with $\omega$ in $H^0(X,\Omega_X^1)$ such that $\omega\wedge \omega'=0$. This implies the complex $$0\to H^0(X, \mathcal{O}_X)\overset{\wedge\omega}{\longrightarrow} H^0(X,\Omega^1_X)\overset{\wedge\omega}{\longrightarrow} H^0(X,\Omega^2_X)$$ is not exact at $H^0(X,\Omega^1_X)$. However, since $\omega$ is non-formal with $d=\textnormal{codim} Z(\omega)\geq 1$, $$0\to H^0(X, \mathcal{O}_X)\overset{\wedge\omega}{\longrightarrow} H^0(X,\Omega^1_X)\overset{\wedge\omega}{\longrightarrow} H^0(X,\Omega^2_X)$$  is exact at $H^0(X,\Omega^1_X)$. This is a contradiction.
\end{proof}

\begin{proposition}
Let $S$ be a smooth projective surface with $\alb.\dim(S)=1$. Then $S$ admits a non-formal 1-form, if and only if $q(S)=1$ and there exists a fibre of the Albanese map of $S$ admitting a non-reduced component.
\end{proposition}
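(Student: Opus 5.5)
The statement has two directions, and I would prove each one directly from the lemmas just established.

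\emph{Necessity.} Suppose $S$ admits a non-formal 1-form $\omega$. By Lemma~\ref{lem:exist-nonhodge+albdim1} (applied with $X=S$), we get $q(S)=1$. The Albanese map is then $a_S\colon S\to \Alb(S)=:E$, a morphism onto an elliptic curve. Since $\alb.\dim S=1$, it has connected fibres by the Stein factorisation and the universal property of the Albanese. Also $H^0(S,\Omega^1_S)=a_S^*H^0(E,\Omega^1_E)$ is one-dimensional, so $\omega=a_S^*\omega_0$ for some nonzero $\omega_0\in H^0(E,\Omega_E^1)$. Lemma~\ref{lem:nonreduced}, applied to $f=a_S$, then produces an $a_S$-vertical curve $F\subset Z(\omega)$. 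The component of the scheme-theoretic fibre $a_S^*(a_S(F))$ supported on $F$ is non-reduced, so that Albanese fibre has a non-reduced component.

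\emph{Sufficiency.} Assume $q(S)=1$ and that some fibre $a_S^*(p)$ contains a component $F$ of multiplicity $k\ge 2$. Let $\omega_0\neq 0$ on $E$ and $\omega=a_S^*\omega_0$. Near a general point of $F$, choose a local coordinate $t$ on $E$ at $p$ and a local equation $s$ of $F$. Then $a_S^*t=s^k u$ with $u$ a unit, and
\[
a_S^*dt=s^{k-1}\bigl(ku\,ds+s\,du\bigr),
\]
which vanishes along $F$ because $k\ge 2$. Hence $F\subseteq Z(\omega)$. Since $\omega\neq 0$, $Z(\omega)\ne S$, so $\codim Z(\omega)=1$ and $\omega\in W^1(S)$.

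It remains to show $\omega\notin V^1(S)$, i.e.\ that the complex $(H^\bullet(S,\CC),\wedge\omega)$ is exact at $H^0$ and at $H^1$.
\begin{itemize}
\item Exactness at $H^0$ is clear, since wedging with the nonzero class $\omega$ is injective on $H^0(S,\CC)=\CC$.
\item For $H^1$, use Hodge decomposition together with $q=1$: $H^1(S,\CC)=\CC\omega\oplus\CC\bar\omega$. The image of $H^0$ is $\CC\omega$, and $\omega\wedge\omega=0$. So it suffices to show $\omega\wedge\bar\omega\neq 0$ in $H^2(S,\CC)$.
\item The form $i\,\omega\wedge\bar\omega=a_S^*(i\,\omega_0\wedge\bar\omega_0)$ is the pullback of a volume form on $E$. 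Pairing with an ample class $h$ gives $\int_S i\,\omega\wedge\bar\omega\wedge h=\deg(a_S)_*h\cdot\int_E i\,\omega_0\wedge\bar\omega_0>0$, since $a_S$ is surjective. Therefore the kernel of $\wedge\omega$ on $H^1$ is exactly $\CC\omega$.
\end{itemize}
So $\omega\in W^1(S)\setminus V^1(S)$, i.e.\ $\omega$ is non-formal. This is essentially the argument sketched in Remark~\ref{rmk:q=1}.

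The main point requiring care is the converse: verifying that the vanishing along $F$ gives codimension exactly $1$, and the non-vanishing of $\omega\wedge\bar\omega$. Both are handled by the local computation and the positivity argument above.
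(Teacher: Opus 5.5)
Your proof is correct and follows the paper's own route. The paper's proof just combines Lemma~\ref{lem:exist-nonhodge+albdim1} and Lemma~\ref{lem:nonreduced} (applied to $f=a_S$) for necessity, and the argument of Remark~\ref{rmk:q=1} for sufficiency. Your local computation $a_S^*dt=s^{k-1}(ku\,ds+s\,du)$ and your positivity argument for $\omega\wedge\bar\omega\neq 0$ make explicit the two facts the remark only asserts, namely $\dim Z(\omega)=1$ and exactness at $H^1(S,\CC)$, and you correctly take $q(S)=1$ as a hypothesis rather than relying on the remark's claim.
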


\begin{proof}
    This follows directly from Lemma \ref{lem:nonreduced}, Remark \ref{rmk:q=1}, and Lemma \ref{lem:exist-nonhodge+albdim1}.
\end{proof}

%\begin{corollary}
 %   Let $S$ be a smooth projective surface with $q(S)=1$, then the Albanese morphism $a_S$ either is smooth or has a singular fibre with a non-reduced component.
%\end{corollary}
%\begin{proof}
 %   If $a_S$ is not smooth, then $Z(f^{\ast}\omega)\neq\emptyset$. Also, the sequence \[
 %0\to H^0(S,\CC) \xrightarrow{\wedge a_S^{\ast}\omega} H^1(S,\CC) \xrightarrow{\wedge a_S^{\ast}\omega} H^2(S,\CC) 
 %\]
  %        is exact at $H^1(S,\CC)$. Hence $f^{\ast}\omega$ is non-Hodge. By Lemma~\ref{lem:nonreduced}, there is a non-reduced 1-dimensional component $F$ contained in $Z(f^{\ast}\omega)$.
%\end{proof}

%{\color{red}This corollary should hold for higher varieties.}

\begin{proposition}
    Let $S$ be a smooth minimal projective surface admitting a non-formal 1-form $\omega$ and $\alb.\dim(S)=2$, then $S$ must be of general type.
\end{proposition}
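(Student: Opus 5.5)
The plan is to go through the Enriques--Kodaira classification of minimal surfaces with maximal Albanese dimension and rule out every case except general type. Since $\alb.\dim(S)=2$, we have $q(S)\geq 2$ and $\kappa(S)\geq 0$. Ruled surfaces have Albanese dimension at most $1$, so they are excluded. It therefore suffices to rule out $\kappa(S)=0$ and $\kappa(S)=1$.

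\textbf{Case $\kappa(S)=0$.} A minimal surface with $\kappa=0$ and $q\geq 2$ is an abelian surface. On an abelian surface every nonzero holomorphic 1-form is nowhere vanishing, so $W^2(S)=\{0\}$. Hence no non-formal 1-form exists, a contradiction.

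\textbf{Case $\kappa(S)=1$.} Here $K_S$ is semiample, so $S$ is a good minimal model in the sense of Theorem \ref{thm:linearity-zeros-gmm}. Let $\phi\colon S\to B$ be the Iitaka (elliptic) fibration. Since $\alb.\dim S=2$, the Albanese image is not a curve. Also $\chi(\cO_S)=e(S)/12\geq 0$ by Noether's formula, since $K_S^2=0$. I split into two subcases.

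\textbf{Subcase $e(S)>0$.} Then $\chi(\cO_S)>0$, and by \cite{GL87} (generic vanishing) every holomorphic 1-form has zeros. On the other hand, \cite{GL87} also shows that $V^2(S)$ contains all of $H^0(S,\Omega_S^1)$ when $\chi\neq 0$. Indeed, for a form with isolated zeros, the complex $(H^\bullet(S,\CC),\wedge\omega)$ is exact in degrees $\ne 2$, and its Euler characteristic equals $e(S)\ne 0$, so it is not exact at $H^2$. I need to check how non-formality is measured for $\omega$ with $\codim Z(\omega)=k$. By definition $\omega\in W^i\setminus V^i$ for some $i$. If $\omega$ is non-formal, Lemma \ref{lem:positivedim} gives $\dim Z(\omega)=1$, so $k=1$ and $\omega\in W^1\setminus V^1$. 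Proposition \ref{nonHodge} then gives $H^0(S,\cH^1)=0$, i.e.\ exactness at $H^1$.

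So the real task is to show that a form whose zero locus contains a curve is not exact at $H^1$. A curve of zeros of $\omega$ on a $\kappa=1$ surface should be vertical for the elliptic fibration or for a map to a curve. Alternatively, a divisorial zero $D$ gives $\omega=s_D\cdot\omega'$. The step I expect to be the main obstacle is showing that a 1-form with a divisorial zero on a properly elliptic surface of maximal Albanese dimension comes from a pencil, hence lies in $V^1$.

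My first attempt at that step: write $Z(\omega)\supseteq D$ with $D$ effective and $\omega\in H^0(\Omega^1_S(-D))$. Since $K_S$ is numerically a rational multiple of the fibre $F$ and $D\cdot$ (general fibre) considerations apply, use the Castelnuovo--de Franchis style argument of \cite{GL87}/\cite{Bea88}. Then $\omega\wedge\omega'=0$ for some independent $\omega'$, as in Lemma \ref{lem:exist-nonhodge+albdim1}, which breaks exactness at $H^0(\Omega^1)\to H^0(\Omega^2)$ and hence at $H^1$. If that fails, I fall back on the structure theorem: by Theorem \ref{thm:linearity-zeros-gmm} and its proof, after an étale cover the forms with zeros are pulled back from the non-abelian factor $\tilde Y$, which is a curve of genus $\geq 2$. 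Such pullbacks satisfy $\omega\wedge\omega'=0$ for another pulled-back form, so they are formal. Since the étale cover preserves zero loci and the exactness statements (by pullback injectivity on cohomology, together with a trace argument), $\omega$ is formal on $S$.

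\textbf{Subcase $e(S)=0$.} Then the elliptic fibration has only multiple smooth fibres. A nonvanishing form exists, and the structure result of \cite{Chu24}/\cite{HWZ26} used in Theorem \ref{thm:linearity-zeros-gmm} applies. So $S$ is étale covered by $C\times E$ with $g(C)\geq 2$, and the same fallback argument shows every form with zeros is formal.

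In all non-general-type cases we reach a contradiction, so $S$ must be of general type.
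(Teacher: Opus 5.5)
\textbf{There is a genuine gap, and it cannot be repaired: the statement is false as written.}

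\textbf{Where your argument fails.} The failing step is the descent at the end of your fallback. Your first attempt fails for the same reason: ``comes from a pencil, hence lies in $V^1$'' holds only for pencils over curves of genus $\geq 2$. Injectivity of $\pi^*$ on cohomology gives only $V^1(S)\subseteq(\pi^*)^{-1}V^1(\tilde S)$: a witness $\beta\notin\mathbb{C}\omega$ with $\omega\wedge\beta=0$ on $S$ pulls back to a witness on $\tilde S$. For the converse you would have to push the witness $p_1^*\omega'$ down by the trace, and that trace can vanish. This happens precisely in your subcase $e(S)=0$.

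\textbf{Counterexample.} Let $C\to B$ be a double cover of an elliptic curve branched at two points, so $g(C)=2$. Let the covering involution act on an elliptic curve $E$ by translation by a point $\tau$ of order $2$, and put $S=(C\times E)/(\mathbb{Z}/2)$, with elliptic fibration $\phi\colon S\to B$.
\begin{itemize}
\item $S$ is smooth and minimal, with $\kappa(S)=1$ and $q(S)=2$.
\item $\psi\colon S\to B\times E/\langle\tau\rangle$ is finite and surjective, so $\alb.\dim S=2$.
\item For $0\neq\alpha\in H^0(B,\Omega^1_B)$, the form $\omega=\phi^*\alpha=\psi^*\mathrm{pr}_1^*\alpha$ vanishes exactly along the reductions $F_1,F_2$ of the two double fibres, so $\omega\in W^1(S)$.
\item $\psi^*$ is an isomorphism on $H^1$ and injective on $H^2$, and $\wedge\alpha$ is exact on the cohomology of the abelian surface. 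Hence $(H^\bullet(S,\mathbb{C}),\wedge\omega)$ is exact at $H^0$ and $H^1$, so $\omega\notin V^1(S)$.
\item Equivalently, in Proposition~\ref{nonHodge}, $\mathcal{H}^1\cong\mathcal{O}_{F_1}(F_1)\oplus\mathcal{O}_{F_2}(F_2)$ has no global sections, because each $\mathcal{O}_{F_i}(F_i)$ is nontrivial $2$-torsion.
\end{itemize}
So $\omega$ is non-formal, yet $S$ is not of general type. On $\tilde S=C\times E$, $\pi^*\omega$ is formal only because of the anti-invariant form $\omega'$ on $C$ and its conjugate, and these have trace $0$.

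\textbf{Comparison with the paper.} The paper does not use the classification. Exactness at $H^1$ gives $\eta$ with $\eta\wedge\omega\neq0$, so a curve $F\subset Z(\omega)$ lies in a canonical divisor and $K_S\cdot F=0$. Spurr's theorem then handles $F^2\geq0$, and a $(-2)$-curve/elliptic-fibration argument handles $F^2<0$. It breaks on the same example. There $F=F_1$ with $F^2=0$, and the pencil is $\phi$ over an elliptic curve. The paper then concludes from the Lemme in the appendix of \cite{Deb82} that $q(S)=g(B)+1$ forces $S$ to be a product. That conclusion fails for genus-one fibres: there $q(S)=g(B)+1$ only says $S$ is a quasi-bundle $(C\times E)/G$ with $G$ acting on $E$ by translations, possibly with multiple fibres. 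A correct version of the statement must exclude such quasi-bundles over an elliptic curve with multiple fibres.

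\textbf{A secondary point.} In your subcase $e(S)>0$ the fallback is unavailable. The structure theorem behind Theorem~\ref{thm:linearity-zeros-gmm} needs a nowhere vanishing form, and none exists when $e(S)>0$. That subcase is in fact empty: $\chi(\mathcal{O}_S)>0$ forces $q(S)=g(B)$, hence $H^0(S,\Omega^1_S)=\phi^*H^0(B,\Omega^1_B)$ and $\alb.\dim S=1$.
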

\begin{proof}
    Since $S$ is of maximal Albanese dimension, then the Kodaira dimension $\kappa(S)\geq 0$ and $q(S)\geq 2$. Suppose by contradiction that $S$ is not of general type, then $K_S^2=0$. The assumption that $\omega$ is non-formal implies that the sequence \[
 0\to H^0(S,\CC) \xrightarrow{\wedge \omega} H^1(S,\CC) \xrightarrow{\wedge \omega} H^2(S,\CC) 
 \]
        is exact at $H^1(S,\CC)$. It follows that there exists $\eta\in H^0(S, \Omega_S^1)$, not proportional with $\omega$, such that $\eta\wedge\omega\neq 0$ in $H^0(S, \Omega_S^2)$. Also, there exists a 1-dimensional component $F\subset Z(\omega)$. Then we have  $F\subset Z(\eta\wedge\omega)$. We infer that $aF+M$ is linearly equivalent to $K_S$ for some effective divisor $M$ on $S$ and positive integer $a$. In the case, $0=K_S^2=aK_SF+K_SM$ and $K_S$ is nef, thus $K_SF=0$. 
        
        We claim that $F^2<0$. Indeed, if $F^2\geq 0$, by \cite[Theorem~1]{Spu88}, $F$ is a rational multiple of a fibre of an irrational pencil $f\colon S\to C$ and $\omega\in f^{\ast}H^0(C, \Omega_C^1)$. Hence it must be the whole fibre. But the arithmetic genus of $F=1$, which implies general fibre of $f$ is elliptic curve. Since $\omega$ is non-formal, then $g(C)=1$. In this case, $2\leq q(S)\leq g(C)+1=2$, which implies that $S$ is a  product of elliptic curve by the Lemme in the Appendix of \cite{Deb82}, that is a contradiction since such a surface doesn't contain any non-formal 1-form. 
        
        Hence by genus formula, $F$ is a $(-2)$-curve. Since $K_S^2=0$, then the Stein factorization of the canonical morphism of $S$ yields an elliptic fibration $\phi\colon S\to B$ with a general fibre $E$. $K_SF=0$ implies that $F$ is contained in a singular fibre of $\phi$. Since $\phi$ has a singular fibre with a $(-2)$-component $F$,  $S$ is not isomorphic to $E\times B$. Hence $q(S)=g(B)$ by Lemme in the Appendix of \cite{Deb82}. By the argument in Remark~\ref{rmk:q=1}, the base curve $B$ must be elliptic and hence $q(S)=g(B)=1$, which is impossible.
\end{proof} 
\begin{proposition}
    Let $S$ be a surface with a non-formal 1-form, then $S$ is not dominated by a product of curves of higher genera. In particular, $S$ is not a product-quotient surface.
\end{proposition}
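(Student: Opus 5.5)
The plan is to argue by contradiction. Suppose $\varphi\colon C_1\times C_2\to S$ is a dominant morphism with $g(C_i)\geq 2$, and let $\omega$ be a non-formal $1$-form on $S$. After resolving indeterminacy, or by passing to a suitable smooth model dominating $S$ (which does not affect the properties used below), we may take $\varphi$ to be a generically finite surjective morphism. By Lemma~\ref{lem:positivedim}, $Z(\omega)$ has a $1$-dimensional component $F$, so $\codim_S Z(\omega)=1$. Non-formality then means, by Proposition~\ref{nonHodge} and the definition, that
\[
H^0(S,\CC)\xrightarrow{\wedge\omega}H^1(S,\CC)\xrightarrow{\wedge\omega}H^2(S,\CC)
\]
is exact at $H^1$. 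In particular, every $\eta\in H^0(S,\Omega^1_S)$ with $\eta\wedge\omega=0$ is proportional to $\omega$, which is exactly the form used in Lemma~\ref{lem:exist-nonhodge+albdim1}.

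The first step is to pull back to the product. Since $\varphi$ is surjective and generically finite, $\varphi^*$ is injective on $H^0(\Omega^1)$ and on cohomology, and $\varphi^*\omega$ vanishes on $\varphi^{-1}(F)$, which has a $1$-dimensional component. Write $\varphi^*\omega=p_1^*\alpha_1+p_2^*\alpha_2$ with $\alpha_i\in H^0(C_i,\Omega^1_{C_i})$. If both $\alpha_i$ are nonzero, then $Z(\varphi^*\omega)=Z(\alpha_1)\times Z(\alpha_2)$ is finite, a contradiction. Hence, say, $\varphi^*\omega=p_1^*\alpha_1$. Consequently the foliation $\ker\omega$ on $S$ has algebraic leaves, namely the images of the fibres $\{c\}\times C_2$.

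The second step is to descend this to a fibration on $S$ and apply Castelnuovo--de Franchis type reasoning. Take the Stein factorization of the map induced by these leaves. This yields a fibration $h\colon S\to B$ and a finite map $C_1\to B$ with $\omega=h^*\gamma$ for some $\gamma\in H^0(B,\Omega^1_B)$. If $g(B)\geq 2$, choose $\gamma'$ on $B$ not proportional to $\gamma$. Then $\eta=h^*\gamma'$ satisfies $\eta\wedge\omega=0$, since it is pulled back from a curve, and $\eta$ is not proportional to $\omega$. This contradicts exactness at $H^1$. If $g(B)=0$, then $\omega=0$, which is absurd. So $g(B)=1$ remains.

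The main obstacle is the case $g(B)=1$. Here $h\colon S\to B$ is a fibration over an elliptic curve with $\omega=h^*\gamma$, and Lemma~\ref{lem:nonreduced} gives a non-reduced fibre component. Compatibility of Stein factorizations shows that the composition $C_1\times C_2\to B$ factors as $C_1\to B$ followed by nothing in the $C_2$-direction. My first attempt would be to exploit the second factor. Since $g(C_2)\geq 2$, and $G$ (or, in general, $\varphi$) respects the product structure up to finite data, the $C_2$-direction should give further $1$-forms or $H^{0,1}$-classes on $S$. Their wedge with $\omega$ would be a fibre-type class $h^*[\mathrm{pt}]$, and I would aim to show that this class is hit by $\omega\wedge(\cdot)$ from a class not in $\CC\omega$, again contradicting exactness at $H^1$. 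The alternative, if that fails, is to reduce to $\alb.\dim S=1$ with $q(S)=1$, as in Remark~\ref{rmk:q=1}. Then one would show that the non-reduced fibre of $h$ pulls back to a non-reduced divisor of the form $\{c\}\times C_2$ with multiplicity. That multiplicity would have to come from ramification of $C_1\to B$, and it should be compared with the fact that $\alpha_1$ vanishes only at finitely many points. This comparison of multiplicities is the step I expect to require real work.

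For the product-quotient statement, apply the above to the quotient map $C_1\times C_2\to (C_1\times C_2)/G$ composed with the minimal resolution. Dominance passes to the resolution, which gives the "in particular."
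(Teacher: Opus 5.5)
\textbf{There is a genuine gap, and it cannot be closed.}

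Your reduction is essentially sound as far as it goes. A $1$-form on $C_1\times C_2$ with a $1$-dimensional zero locus is $p_1^*\alpha_1$ up to swapping the factors. So $\omega=h^*\gamma$ for a fibration $h\colon S\to B$, and exactness at $H^1$ excludes $g(B)\ge 2$. (If $C_1\times C_2\dashrightarrow S$ is only a rational map, as for $(C_1\times C_2)/G$ with non-free $G$ followed by the resolution, the first step needs more care. On a blow-up, the pull-back of $p_1^*\alpha_1+p_2^*\alpha_2$ vanishes along the exceptional curves over its isolated zeros.)

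The genuine gap is the case $g(B)=1$, which you flag but leave open. This case contains counterexamples to the statement as written.
\begin{itemize}
\item Let $G=\mathbb{Z}/2$. Let $\pi_1\colon C_1\to E$ be a double cover of an elliptic curve branched at two points $p_1,p_2$, so $g(C_1)=2$ with ramification points $r_1,r_2$.
\item Let $\pi_2\colon C_2\to C_2'$ be an \'etale double cover of a genus $2$ curve, so $g(C_2)=3$.
\item Let $G$ act diagonally on $C_1\times C_2$. The action is free because it is free on $C_2$.
\item Then $S=(C_1\times C_2)/G$ is a smooth product-quotient surface, \'etale-covered by $C_1\times C_2$. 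The map $h\colon S\to E$ has double fibres $2D_1,2D_2$ over $p_1,p_2$, where $D_i$ is the image of $\{r_i\}\times C_2$.
\item For $0\neq\gamma\in H^0(E,\Omega^1_E)$, the form $\omega=h^*\gamma$ pulls back to $p_1^*\pi_1^*\gamma$, whose zero locus is $\{r_1,r_2\}\times C_2$. Hence $Z(\omega)=D_1\cup D_2$ and $\omega\in W^1(S)$.
\item We have $H^1(S,\CC)=h^*H^1(E,\CC)\oplus h'^*H^1(C_2',\CC)$, where $h'\colon S\to C_2'$ is the other projection. By K\"unneth on $C_1\times C_2$, a relation $(h^*a+h'^*b)\wedge\omega=0$ forces $b=0$, since its $H^1\otimes H^1$ component is $\pm\pi_1^*\gamma\otimes\pi_2^*b$. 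It also forces $a\wedge\gamma=0$ in $H^2(E,\CC)$, i.e.\ $a\in\CC\gamma$.
\item So $(H^\bullet(S,\CC),\wedge\omega)$ is exact at $H^0$ and $H^1$, hence $\omega\notin V^1(S)$ and $\omega$ is non-formal. Equivalently, via Proposition~\ref{nonHodge}: $\cH^1(\Omega^\bullet_S,\wedge\omega)\cong\bigoplus_i\cO_{D_i}(D_i)$, and each $\cO_{D_i}(D_i)$ is the nontrivial $2$-torsion bundle defining $C_2\to C_2'$, so it has no sections.
\end{itemize}

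\textbf{Why your escape routes fail, and how the paper's proof compares.} Neither of your proposed routes works here. The non-reduced fibres come from ramification of $C_1\to B$ exactly at the finitely many zeros of $\alpha_1=\pi_1^*\gamma$, which is perfectly consistent. The classes from the $C_2$-direction wedge with $\omega$ injectively into the $H^1\otimes H^1$ K\"unneth summand, not onto a fibre class. So they never give a second element of $\ker(\wedge\omega)$ on $H^1$. Also, $q(S)=3$ in this example, so no reduction to $\alb.\dim S=1$ is available.

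The paper argues differently. It asserts that some $1$-dimensional component $D$ of $Z(\omega)$ has $D^2<0$. Pulling back gives a component $\widetilde D$ of $Z(f^*\omega)$ with $\widetilde D^2<0$. This contradicts the fact that $1$-dimensional zero loci of $1$-forms on $C_1\times C_2$ are unions of fibres of the projections, which have square $0$. But $D^2<0$ is not justified there. In the preceding proposition, the analogous claim $F^2<0$ was derived only under the hypothesis that $S$ is not of general type. It fails in the example above, where $D_i^2=0$ because $2D_i$ is a fibre.

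So your leftover case (a form pulled back from an elliptic curve, vanishing on reduced multiple fibres) is exactly where the paper's key step also breaks. The proposition needs an extra hypothesis excluding this case, not a sharper argument. One such hypothesis is that $Z(\omega)$ has a component of negative self-intersection; under it the paper's argument goes through when $f$ is a morphism. Examples with $q(S)=1$ also exist, e.g.\ with $G=(\mathbb{Z}/2)^2$ and $C_2/G\cong\PP^1$. Those are non-formal already by the paper's own characterization in the case $\alb.\dim S=1$.
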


\begin{proof}
    Suppose that $f\colon C_1\times C_2\to S$ is a surjective morphism, where $C_1,C_2$ are smooth curves of genera $g(C_1)\geq 2$ and $g(C_2)\geq 2$, respectively. If $\omega\in H^0(S, \Omega_S^1)$ is a non-formal holomorphic 1-form, then there is a 1-dimensional component  $D\subset Z(\omega)$ and $D^2<0$. It follows that $\dim Z(f^{\ast}\omega)=1$ and $f^{\ast}D\subset Z(f^{\ast}\omega)$. Hence there is an irreducible component $\widetilde{D}\subset f^{\ast}D$ with $\widetilde{D}^2<0$, which is a contradiction since the higher dimensional zero locus of a holomorphic 1-form on $C_1\times C_2$ is either horizontal or vertical with respect to  projections to its factors, which has non-negative self-intersection numbers.
\end{proof}

%references added in the ref.bib file
\bibliographystyle{plain}
\bibliography{ref}

% references added in the bibitem form
%\begin{thebibliography}{99}
%\bibitem[BL]{BL}Birkenhake, Christina; Lange, Herbert.  Complex abelian varieties. Grundlehren Math. Wiss., 302[Fundamental Principles of Mathematical Sciences]Springer-Verlag, Berlin, 2004.
%\end{thebibliography}

\end{document}